\documentclass{elsarticle}
\usepackage[utf8]{inputenc}
\usepackage{amssymb}
\usepackage[english]{babel}
\usepackage{mathrsfs}
\usepackage{euscript}
\usepackage{amsmath}
\usepackage{amssymb}
\usepackage{amsthm}
\usepackage{amsfonts}
\usepackage{amsbsy}
\usepackage{enumitem}
\usepackage{natbib}
\bibliographystyle{plainnat}
\renewcommand{\le}{\leqslant}
\renewcommand{\ge}{\geqslant}

\newcommand{\ddd}{\,\text{d}}
\newtheorem{theorem}{Theorem}[section]
\newtheorem{definition}[theorem]{Definition}
\newtheorem{lemma}[theorem]{Lemma}
\newtheorem{remark}[theorem]{Remark}
\newtheorem{corollary}[theorem]{Corollary}
\theoremstyle{definition}
\newtheorem{example}[theorem]{Example}
\newtheorem{problem}[theorem]{Problem}
\newcommand{\ind}{\text{ind}}
\newcommand{\supp}{\text{supp}\,}
\newcommand{\intr}{\text{int}\,}
\setlist[enumerate,1]{label=(\roman*)}
\numberwithin{equation}{section}

\begin{document}
	\title{Subnormal and completely hyperexpansive completion problem for weighted shifts on directed trees\tnoteref{t1}}
	\tnotetext[t1]{The author of the publication received an incentive scholarship from the 
	funds of the program Excellence Initiative - Research University at the Jagiellonian 
	University in Kraków.}
	\author[IMUJ,DSUJ]{Michał Buchała}
	\affiliation[IMUJ]{organization={Institute of Mathematics, Jagiellonian University},
		addressline={Łojasiewicza~6},
		postcode={PL-30348},
		city={Krakow},
		country={Poland}}
	\affiliation[DSUJ]{organization={Doctoral School of Exact and Natural Sciences, Jagiellonian University},
		addressline={Łojasiewicza~11},
		postcode={PL-30348},
		city={Krakow},
		country={Poland}}
	\begin{keyword}
		Weighted shifts, completion problems, subnormal operators, completely hyperexpansive operators, truncated moment problem, backward extensions
		\MSC[2010]{47B37, 47B20, 44A60}
	\end{keyword}

	\begin{abstract}
		For a given directed tree and weights attached to a subtree, the completion problem is to determine if these weights may be completed in a way to obtain a bounded weighted shift on the whole tree, which further satisfies additional conditions. In this paper we consider subnormal and completely hyperexpansive completion problem for weighted shifts on directed trees with one branching point. We develop new results on backward extensions of truncated moment sequences and, exploiting these results, we obtain a characterization of existence of such a completion
	\end{abstract}

   \maketitle

	\section{Introduction}

\label{SecIntroduction}
Classical weighted shifts form an important class of operators, which is a rich source of examples in operator theory. In \cite{Sta1} the author investigated the subnormality of weighted shifts and gave the construction of a minimal normal extension of a subnormal weighted shift. In \cite{GelWal1} there appeared another characterization of subnormal weighted shifts in terms of moment sequences (the characterization was mentioned also in \cite{Hal2} as done formerly by Berger). In \cite{Ath1} the author investigated complete hyperexpansivity of weighted shifts on the occasion of introducing such a class of operators; the obtained characterization of completely hyperexpansive weighted shifts involves L\'evy-Khinchin representation of certain sequences. \par 
In \cite{JJS1} weighted shifts on directed trees have been introduced. It is a more general class of operators containing classical weighted shifts as well as their adjoints (cf. \cite[Remark 3.4.2]{JJS1}). In the same paper there appeared criteria on subnormality and complete hyperexpansivity of weighted shifts on trees with one branching point (cf. \cite[Theorems 6.2.1 and 7.2.1]{JJS1}). These criteria, as for classical weighted shifts, translate notions of subnormality and complete hyperexpansivity into certain moment sequences, but this time negative moments are also involved. \par
In \cite{Sta1} Stampfli initiated research on subnormal completions of weighted shifts. Thanks to his construction of minimal normal extension of a subnormal weighted shift, he obtained that for $ 0<\lambda_1<\lambda_2<\lambda_3 $ there exists a subnormal weighted shift with weights starting from $ (\lambda_{1},\lambda_{2}, \lambda_{3}) $. Moreover, he showed that such a condition does not suffice if we take four weights. Nevertheless, in his paper there appears a quite simple condition for a sequence $ 0<\lambda_1<\lambda_2<\lambda_3<\lambda_4 $ to have a subnormal completion, namely
\begin{equation*}
\lambda_{4}^{2} \ge \lambda_{3}^{2} + \frac{\lambda_{1}^{2}}{\lambda_{3}^{2}}\frac{(\lambda_{3}^{2}-\lambda_{2}^{2})^{2}}{\lambda_{2}^{2}-\lambda_{1}^{2}}.
\end{equation*}
In \cite{CF1} the authors recovered and generalized the results of \cite{Sta1} by studying truncated moment sequences and recursively generated completions. In \cite{JabStoJungKwak1} the completely hyperexpansive completion problem for weighted shifts was investigated -- the authors used similar approach as in \cite{CF1}. In \cite{Li1} and \cite{CLY1} the authors studied subnormal completion problem for 2-variable weighted shifts. In \cite{K1} completions of  $ d $-variable weighted shifts were studied.\par
In \cite{EJSY1} there was posed a subnormal completion problem for weighted shifts on directed trees with one branching point, and several general results were proved (concerning e.g. 1-generation completions and flat completions). In \cite[Theorem 4.1]{EJSY2} the solution to the 2-generation completion problem was obtained for directed trees with trunk of length 1.\par
The aim of the present paper is to give a full solution to $ p $-generation subnormal and completely hyperexpansive completion problem for weighted shifts on directed trees with one branching point. The paper is organized as follows. In Section \ref{SecMomentsOnInfiniteInterval} we study truncated moment sequences on $ (0,\infty) $ and $ (0,1] $ using methods from \cite[Chapters III and IV]{KN1}. The main result of this section is Theorem \ref{ThmStrictlyPositiveSequencesInfiniteInterval}, which characterizes strict positivity in terms of index of a given sequence. In Section \ref{SecBackwardExtensionsInfiniteInterval} we study backward extensions of truncated moment sequences on $ (0,\infty) $ and $ (0,1] $. The key result is Theorem \ref{ThmStrictlyPositiveBackwardExtensionsInfiniteInterval}, which gives us the simple condition for strict positivity of backward extension. These all allow us to obtain crucial characterizations of truncated moment sequences in terms of extremal values of the integral of reciprocal function (see Theorems \ref{ThmConditionsOnMomentsOfMinimalMeasureInfiniteInterval}-\ref{ThmExistenceOfBackwardExtensionWithPrescribedNumberOfAtomsHalfOpenInterval}). In Theorem \ref{ThmSubnormalCompletion}, using the theory developed in Section \ref{SecBackwardExtensionsInfiniteInterval}, we give a solution (in the kind of \cite[Theorem 4.1]{EJSY2}) to the subnormal completion problem for directed tress with a finite trunk. The next part of Section \ref{SecSubnormalCompletions} is devoted to study trees with infinite trunk -- exploiting compactness of certain sets, we link these two cases. Theorem \ref{ThmCompletelyHyperexpansiveCompletion} is an analogue of Theorem \ref{ThmSubnormalCompletion} for the completely hyperexpansive completion problem. 
	\section{Preliminaries}
\label{SecPreliminaries}
By $ \mathbb{N} $, $ \mathbb{Z} $, $ \mathbb{Q} $, $ \mathbb{R} $ we denote the set of non-negative integers, integers, rational numbers, and real numbers, respectively. We also use the symbol $ \overline{\mathbb{N}} $ for the set $ \mathbb{N}\cup \{\infty\} $ and $ \overline{\mathbb{R}} $ for the set $ \mathbb{R}\cup \{-\infty,\infty \} $. For $ p\in\mathbb{R} $ and $ A\subset \overline{\mathbb{R}} $ we denote $ A_p = \{x\in A\!: x\ge p \} $. For $ x\in\mathbb{R} $ we denote
\begin{equation*}
    \lceil x \rceil = \min\{k\in \mathbb{Z}\!: x\le k \}.
\end{equation*}
For a subset $ A\subset \mathbb{R} $ we denote by $ \chi_{A} $ the characteristic function of $ A $. For a sequence $ (s_{i})_{i\in I} $ ($ I\subset \mathbb{N} $) of real numbers the notation $ (s_{i})_{i\in I} \subset A $ means that all terms of the sequence $ (s_{i})_{i\in I} $ are elements of the set $ A \subset \mathbb{R} $. If $ \mathbb{K}\in\{\mathbb{R}, \mathbb{C}\} $ and $ n\in\mathbb{N} $, then $ \mathbb{K}_{n}[x] $ stands for the space of polynomials of degree at most $ n $ with coefficients in $ \mathbb{K} $. If $ X $ is a topological space, then $ \mathcal{B}(X) $ stands for the $ \sigma $-algebra of Borel subsets of $ X $. If $ \mu $ is a positive regular measure on $ \mathcal{B}(X) $, then the closed support of $ \mu $ is denoted by $ \supp\mu $.\par
A sequence $ (s_{k})_{k=0}^{\infty}\subset [0,\infty) $ is called a \textit{Stjelties moment sequence}, if there exists a measure $ \mu\!:\mathcal{B}([0,\infty))\to [0,\infty) $ such that
\begin{equation}
        \int_{[0,\infty)} t^{n}\ddd\mu(t) = s_{k}, \qquad k\in\mathbb{N}.
\end{equation}
Recall that a sequence $ (a_{k})_{k=0}^{\infty} \subset \mathbb{R} $ is \textit{completely alternating} if 
\begin{equation*}
    \sum_{k=0}^{n} (-1)^{k} \binom{n}{k} a_{k+m} \le 0, \qquad m\in\mathbb{N},\ n\in\mathbb{N}_{1}.
\end{equation*}
Let $ H $ be a complex Hilbert space. By $ \mathbf{B}(H) $ we denote the $ C^{\ast} $-algebra of linear and bounded operators on $ H $. An operator $ T\in \mathbf{B}(H) $ is called \textit{normal} if $ T $ commutes with its adjoint. An operator $ T\in\mathbf{B}(H) $ is called \textit{subnormal} if there exists a Hilbert space $ K $ containing $ H $ (in sense of isometric embeddings) and a normal operator $ S\in\mathbf{B}(K) $ such that $ S|_{H} = T $.
We say that $ T\in\mathbf{B}(H) $ is \textit{completely hyperexpansive} if
\begin{equation}
    \label{FormCompletelyHyperExpansiveOperator}
    \sum_{k=0}^{n}(-1)^{k}\binom{n}{k} T^{\ast k}T^{k} \le 0, \qquad n\in\mathbb{N}_{1}.
\end{equation}
In view of \cite[Remark 2]{Ath1} we see that $ T $ is completely hyperexpansive if and only if for every $ f\in H $ the sequence $ (\lVert T^{n}f\rVert^{2})_{n=0}^{\infty} $ is completely alternating.\par
Let us recall the definition of weighted shift on a directed tree (see \cite{JJS1}). For a directed tree $ \mathcal{T} = (V,E) $ and a family $ \pmb{\lambda} = (\lambda_{v})_{v\in V\setminus\{\mathtt{root}\}}\subset \mathbb{C} $ satisfying the following condition:
\begin{equation}
    \label{FormBoundedShift}
    \sup_{v\in V} \sum_{\substack{u\in V\\(v,u)\in E}} \lvert \lambda_{u}\rvert^{2} < \infty,
\end{equation}
we define an operator $ S_{\pmb{\lambda}}\in \mathbf{B}(\ell^{2}(V)) $, called \textit{weighted shift} on $ \mathcal{T} $ with weights $ \pmb{\lambda} $, by the formula
\begin{equation*}
    S_{\pmb{\lambda}} e_{v} = \sum_{\substack{u\in V\\ (v,u)\in E}} \lambda_{u}e_{u}, \qquad v\in V.
\end{equation*}
Note that the classical weighted shifts are also weighted shifts in this more general setting: a suitable trees are simply $ \mathbb{Z} $ and $ \mathbb{N} $. \par In this paper we are considering weighted shifts on directed trees with one branching point, that is, trees $ \mathcal{T}_{\eta,\kappa} = (V_{\eta, \kappa}, E_{\eta,\kappa}) $, where $ \eta,\kappa\in \overline{\mathbb{N}} $, $ \eta\ge 1 $, and
\begin{align*}
V_{\eta,\kappa} &= \{-k\!: k \in \mathbb{N}\cap[0,\kappa] \} \sqcup \{(i,j)\!:j\in\mathbb{N}_{1}, i\in \mathbb{N}\cap[1,\eta] \},\\
E_{\kappa} &= \{(-k,-k+1)\!: k\in \mathbb{N}\cap[1,\kappa] \}\sqcup \{(0,(i,1))\!: i\in\mathbb{N}\cap [1,\eta]\},\\
E_{\eta,\kappa} &= E_{\kappa} \sqcup \{((i,j),(i,j+1))\!: i\in\mathbb{N}\cap[1,\eta],j\in\mathbb{N}_{1} \}.
\end{align*}
Given $ \eta,\kappa\in\overline{\mathbb{N}} $, $ \eta\ge 2 $, and a weighted shift $ S_{\pmb{\lambda}}\in \mathbf{B}(\ell^{2}(V_{\eta,\kappa})) $ we say that $ S_{\pmb{\lambda}} $ is $ r $-generation flat if $ \lambda_{i,j} = \lambda_{1,j} $ for $ j\in\mathbb{N}_{r} $ and $ i\in\mathbb{N}\cap [1,\eta] $.\par
If $ \eta,\kappa\in \overline{\mathbb{N}} $, $ \eta\ge 2 $, $ p\in\mathbb{N}_1 $, and $ \pmb{\lambda} = \{\lambda_{-k}\}_{k=0}^{\kappa-1}\cup \{ \lambda_{i,1} \}_{i,j=1}^{\eta,p}\subset \mathbb{C} $, we say that $ \pmb{\lambda} $ has a completion if there exists a weighted shift $ S_{\pmb{\lambda'}}\in \mathbf{B}(\ell^{2}(V_{\eta,\kappa})) $ such that
\begin{align*}
    \lambda_{-k}' &= \lambda_{-k}, \qquad k\in\mathbb{N}\cap[0,\kappa-1]\\
    \lambda_{i,j}' &= \lambda_{i,j}, \qquad i\in\mathbb{N}\cap[1,\eta], \ j\in\mathbb{N}\cap[1,p];
\end{align*}
every such operator will be called a \textit{completion} of $ \pmb{\lambda} $.
	\section{Moment problem on $ (0,\infty) $ and $ (0,1] $}
\label{SecMomentsOnInfiniteInterval}
In \cite{CF1} the authors gave a solution to the truncated Stieltjes and Hausdorff moment problems. In this section we investigate the special cases of these problems, namely the moment problems on $ (0,\infty) $ and $ (0,1] $. We use a different approach, which is similar to what Krein and Nudel'man presented in \cite[Chapter III]{KN1} for truncated moment problem on compact interval; actually, it turns out that the theory of moments on compact interval contains the theory of moments on $ (0,\infty) $ and $ (0,1] $. This approach seems to be more suitable for studying backward extensions of truncated moment sequences in Section \ref{SecBackwardExtensionsInfiniteInterval}. \par
\begin{definition}
    \label{DefMomentSequenceInfiniteInterval}
    Let $ n\in\mathbb{N} $ and $ \mathbf{s} = (s_0,\ldots,s_n) \subset \mathbb{R} $. Assume $ I \subset \mathbb{R} $ is an interval. We say that $ \mathbf{s} $ is a \textit{moment sequence on} $ I $ if there exists a
    measure $ \mu\!: \mathcal{B}(I)\to [0,\infty) $ satisfying the following conditions:
    \begin{enumerate}
        \item there exists an interval $ [a,b]\subset I $ ($ a<b $) such that $ \supp\mu \subset [a,b] $,
        \item\ $ \int_{I} t^{k}\ddd\mu(t) = s_k, $ $ k\in\mathbb{N}\cap [0,n] $.
    \end{enumerate}
    If such a measure is unique, we call $ \mathbf{s} $ \textit{determinate on} $ I $; otherwise, we call a moment sequence $ \mathbf{s} $ \textit{indeterminate on} $ I $.
\end{definition}
If $ \mathbf{s} = (s_0,\ldots,s_n) \subset \mathbb{R} $, $ n\in\mathbb{N} $, is a moment sequence on an interval $ I\subset \mathbb{R} $, then by $ \mathcal{M}_{I}(\mathbf{s}) $ we denote the set of all measures satisfying Definition \ref{DefMomentSequenceInfiniteInterval}; for the further use we make the abbreviations: $ \mathcal{M}_{\infty}(\mathbf{s}) := \mathcal{M}_{(0,\infty)}(\mathbf{s}) $ and $ \mathcal{M}_{1}(\mathbf{s}) := \mathcal{M}_{(0,1]}(\mathbf{s}) $.\par
For $ \mathbf{s} = (s_0,\ldots,s_n) \subset \mathbb{R} $, $ n\in\mathbb{N} $,
define a linear functional $ \sigma\!: \mathbb{R}_n[x]\to \mathbb{R} $ by the
formula
\[ \sigma(x^{k}) = s_k, \quad k\in\mathbb{N}\cap [0,n]. \]
\begin{definition}
    \label{DefPositiveSequenceInfiniteInterval}
    Let $ n\in\mathbb{N} $ and $ \mathbf{s} = (s_0,\ldots,s_n) \subset \mathbb{R} $. Assume $ I \subset \mathbb{R} $ is an interval. A sequence $ \mathbf{s} $ is \textit{positive on} $ I $ if there exists an interval $ [a,b]\subset I $ ($ a<b $) such that for every polynomial $ P\in\mathbb{R}_n[x] $ satisfying $ P(x) \ge 0 $, $ x\in[a,b] $, we have $ \sigma(P)\ge 0 $. A positive sequence $ \mathbf{s} $ is
    \textit{strictly positive on} $ I $ if there exists an interval $ [a,b]\subset I $ ($ a<b $) such that for every nonzero polynomial $ P\in\mathbb{R}_n[x] $ satisfying $ P(x) \ge 0 $, $ x\in[a,b] $, we have $ \sigma(P)>0 $. If $ \mathbf{s} $ is positive on $ I $, but not strictly positive, we call it \textit{singularly positive on} $ I $.
\end{definition}
The first theorem reveals the connection between moment sequences and positive sequences (cf. \cite[Theorem III.1.1]{KN1}).
\begin{theorem}
    \label{ThmEquivalentConditionForMomentSequenceInfiniteInterval}
    Let $ n\in\mathbb{N} $ and $ \mathbf{s} = (s_0,\ldots,s_n) \subset \mathbb{R} $. Assume $ I\subset \mathbb{R} $ is an interval. Then $ \mathbf{s} $ is a moment sequence on $ I $ if and only if $ \mathbf{s} $ is positive on $ I $.
\end{theorem}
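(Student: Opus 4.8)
The plan is to reduce both implications to the compact-interval case, Theorem~\ref{ThmEquivalentConditionForMomentSequenceCompactInterval}, by simply unwinding Definitions~\ref{DefMomentSequenceInfiniteInterval} and \ref{DefPositiveSequenceInfiniteInterval}, which were both set up to quantify over compact subintervals of $(0,\infty)$.

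For necessity, assume $\mathbf{s}$ is a moment sequence on $(0,\infty)$, and let $\mu\!:\mathcal{B}((0,\infty))\to[0,\infty)$ be a representing measure with $\operatorname{supp}\mu\subset[a,b]$ for some $[a,b]\subset(0,\infty)$. Restricting $\mu$ to $\mathcal{B}([a,b])$ (the Borel $\sigma$-algebras being compatible, since $[a,b]$ is closed in $(0,\infty)$) yields a measure witnessing that $\mathbf{s}$ is a moment sequence on $[a,b]$ in the sense of Definition~\ref{DefMomentSequenceCompactInterval}. By Theorem~\ref{ThmEquivalentConditionForMomentSequenceCompactInterval}, $\mathbf{s}$ is then positive on $[a,b]$, and by Definition~\ref{DefPositiveSequenceInfiniteInterval} this is exactly the assertion that $\mathbf{s}$ is positive on $(0,\infty)$.

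For sufficiency, assume $\mathbf{s}$ is positive on $(0,\infty)$; by Definition~\ref{DefPositiveSequenceInfiniteInterval} there is an interval $[a,b]\subset(0,\infty)$ on which $\mathbf{s}$ is positive. Theorem~\ref{ThmEquivalentConditionForMomentSequenceCompactInterval} produces a measure $\nu\!:\mathcal{B}([a,b])\to[0,\infty)$ with $\int_{[a,b]}t^{k}\ddd\nu(t)=s_{k}$ for $k\in\mathbb{N}\cap[0,n]$. Extending $\nu$ by zero to a Borel measure $\mu$ on $(0,\infty)$ via $\mu(A)=\nu(A\cap[a,b])$ gives $\operatorname{supp}\mu\subset[a,b]\subset(0,\infty)$ and $\int_{(0,\infty)}t^{k}\ddd\mu(t)=s_{k}$, so both conditions of Definition~\ref{DefMomentSequenceInfiniteInterval} hold and $\mathbf{s}$ is a moment sequence on $(0,\infty)$.

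There is no substantial obstacle here: the statement is essentially a bookkeeping consequence of the compact-interval theory, and the only point worth a line of care is the harmless identification of a finite Borel measure on the compact space $[a,b]$ with a Borel measure on the locally compact space $(0,\infty)$ supported in $[a,b]$ (regularity being automatic for finite measures on these spaces). In particular, no new positivity or Hankel-matrix arguments are needed beyond Theorem~\ref{ThmEquivalentConditionForMomentSequenceCompactInterval}.
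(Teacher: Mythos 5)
Your proof is correct and follows exactly the same route as the paper's: restrict the representing measure to $[a,b]$ and invoke Theorem~\ref{ThmEquivalentConditionForMomentSequenceCompactInterval} for necessity, and extend a measure from $\mathcal{M}_{a,b}(\mathbf{s})$ by zero for sufficiency. No differences worth noting.
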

\begin{proof}
    For the proof of the 'if' part suppose $
    \mu\!: \mathcal{B}(I)\to [0,\infty) $ satisfies Definition \ref{DefMomentSequenceInfiniteInterval}. Let $ [a,b]\subset \mathbb{R} $ ($ a<b $) be such that $ \supp\mu\subset [a,b] $. Then for
    \[ 
        \nu\!: \mathcal{B}([a,b]) \ni A\longmapsto \mu(A) \in [0,\infty)
    \]
    we have $ \nu\in \mathcal{M}_{[a,b]}(\mathbf{s}) $. Hence, by \cite[Theorem III.1.1]{KN1}, $ \mathbf{s} $ is positive on $ [a,b] $, which implies that $ \mathbf{s} $ is positive on $ I $.\\
    Conversely, by Definition \ref{DefPositiveSequenceInfiniteInterval}, $ \mathbf{s} $ is positive on some interval
    $ [a,b]\subset I $ ($ a<b $). By \cite[Theorem III.1.1]{KN1}, $ \mathbf{s} $ is a moment sequence on $ [a,b] $. If $
    \mu\in\mathcal{M}_{[a,b]}(\mathbf{s}) $, then\[ \nu\!: \mathcal{B}(I) \ni
    A\longmapsto \mu(A\cap [a,b]) \in [0,\infty) \]
    satisfies Definition \ref{DefMomentSequenceInfiniteInterval}.
\end{proof}
Note that by \cite[Theorem III.4.1]{KN1} strictly positive moment sequences are always indeterminate; the
converse will be proved later in Theorem \ref{ThmStrictlyPositiveSequencesInfiniteInterval}.\par

\begin{remark}
    Let $ I \subset \mathbb{R} $ be an interval. If $ \mathbf{s} = (s_0,\ldots,s_n) \subset \mathbb{R} $, $ n\in\mathbb{N} $, is positive on $ I $, then
    \[ \mathcal{M}_{I}(\mathbf{s}) = \bigcup \{\mathcal{M}_{[a,b]}(\mathbf{s})\!: \mathbf{s} \text{ is positive on } [a,b] \subset I \} \]
    with obvious identification
    \begin{align*}
    \mathcal{M}_{[a,b]}(\mathbf{s})\ni \mu \longmapsto \mu(\,\cdot\,\cap [a,b]) \in \mathcal{M}_{I}(\mathbf{s}), \qquad a<b.
    \end{align*}
\end{remark}
Assume $ I\subset \mathbb{R} $ is an interval and $ \mathbf{s} = (s_{0},\ldots,s_{n})\subset \mathbb{R} $ is a moment sequence on $ I $. Following \cite[III.\S 3]{KS1} we define the index of a measure $ \mu\in \mathcal{M}_{I}(\mathbf{s}) $ as follows: if $ \supp\mu $ is an infinite set, then we set $ \ind_{I}(\mathbf{s}) = \infty $, otherwise we set
\begin{equation*}
    \ind_{I}(\mu) = \sum_{c\in I} \chi_{\supp\mu}(c)\epsilon_{I}(c),
\end{equation*}
where $ \epsilon_{I}\!: I\to \{\frac{1}{2},1\} $ is defined by:
\begin{equation*}
    \epsilon_{I}(c) = \begin{cases}
        1, & \text{if } c\in \intr I\\
        \frac{1}{2}, & \text{otherwise}
    \end{cases}
    \qquad c\in I.
\end{equation*}
\begin{remark}
    In \cite[Section III.\S 4.1]{KS1} the authors used equivalent definition of index with the function $ \epsilon_{I}\!: I\to \{1,2\} $ given by:
    \begin{equation*}
        \epsilon_{I}(c) = \begin{cases}
            2, & \text{if } c\in \intr I\\
            1, & \text{otherwise}
        \end{cases}
        \qquad c\in I.
    \end{equation*}
\end{remark}
We define also the index of $
\mathbf{s} $ as
\[ \ind_{I}(\mathbf{s}) = \min\{\ind_{I}(\mu)\!:
\mu\in\mathcal{M}_{I}(\mathbf{s}) \}. \]
In view of \cite[Theorem 1]{R1}, $ \ind_{I}(\mathbf{s}) $ is always finite. As before, for the convenience we abbreviate: $ \ind_{\infty} := \ind_{(0,\infty)} $ and $ \ind_{1} := \ind_{(0,1]} $. \par
In \cite[Chapter III]{KN1} the authors was considering the case $ I = [a,b] $. In \cite[Theorem III.4.1]{KN1} there was proved that the sequence $ \mathbf{s} $ is singularly positive on $ [a,b] $ if and only if $ \ind_{[a,b]}(\mathbf{s}) \le \frac{n}{2} $. Moreover (cf. \cite[Theorem III.5.1]{KN1}), if $ \mathbf{s} $ is strictly positive on $ [a,b] $, then there are exactly two measures (called \textit{principal measures}) in $ \mathcal{M}_{I}(\mathbf{s}) $ of index $ \frac{n+1}{2} $; the principal measure with atom at $ b $ is called \textit{upper principal} and the other is called \textit{lower principal}. In the subsequent part of this section we present counterpart of this theory for the intervals $ (0,\infty) $ and $ (0,1] $.\par
The next result gives the upper bound on the index of a positive sequence.
\begin{lemma}
    \label{LemIndexEstimationInfiniteInterval}
    Let $ n\in\mathbb{N} $ and $ \mathbf{s} = (s_0,s_1,\ldots,s_n) \subset [0,\infty) $.
    \begin{enumerate}
        \item If $ \mathbf{s} $ is positive on $ (0,\infty) $, then $ \ind_{\infty}(\mathbf{s}) \le \left\lceil \frac{n+1}{2}\right\rceil $.
        \item If $ \mathbf{s} $ is positive on $ (0,1] $, then $ \ind_{1}(\mathbf{s}) \le \frac{n+1}{2} $.
    \end{enumerate}
\end{lemma}
\begin{proof}
    We prove (i). First, assume $ \mathbf{s} $ is strictly positive on $ (0,\infty) $. Let $ [a,b]\subset (0,\infty) $ be an interval such that $ \mathbf{s} $ is strictly positive on $
    [a,b] $. Assume $ n = 2m+1 $, $ m\in\mathbb{N} $, and let $
    \mu\in\mathcal{M}_{[a,b]}(\mathbf{s}) $ be a lower principal measure. Then, in view of \cite[Section III.\S 4.1]{KN1},
    \[ \ind_{[a,b]}(\mu) = \ind_{\infty}(\mu) = m+1 = \frac{n+1}{2}, \]
    so $ \ind_{\infty}(\mathbf{s}) \le \frac{n+1}{2} = \left\lceil
    \frac{n+1}{2}\right\rceil $. Now, suppose $ n = 2m $, $ m \in \mathbb{N} $, and let $
    \mu\in\mathcal{M}_{[a,b]}(\mathbf{s}) $ be any principal measure. Then, again by \cite[Section III.\S 4.1]{KN1},
    \[ \ind_{[a,b]}(\mu) = \frac{n+1}{2} = m+\frac{1}{2} \]
    and
    \[ \ind_{\infty}(\mu) = m+1 = \left\lceil \frac{n+1}{2}\right\rceil, \]
    so $ \ind_{\infty}(\mathbf{s}) \le \left\lceil \frac{n+1}{2}\right\rceil
    $. Next, assume $ \mathbf{s} $ is singularly positive on $ (0,\infty) $, that is, for every interval $ [a,b]\subset (0,\infty) $ if $ \mathbf{s} $~is
    positive on $ [a,b] $, then $ \mathbf{s} $ is singularly positive on $ [a,b] $. Suppose to the contrary that $
    \ind_{\infty}(\mathbf{s}) > \left\lceil \frac{n+1}{2}\right\rceil $. Then
    there exists an interval $ [a,b]\subset (0,\infty) $ and a measure $ \mu\in\mathcal{M}_{[a,b]}(\mathbf{s}) $ satisfying 
    \begin{equation*}
        \infty > \ind_{\infty}(\mu) > \left\lceil \frac{n+1}{2}\right\rceil \ge \frac{n+1}{2}.
    \end{equation*} Enlarging the interval if necessary, we can assume that all atoms of $ \mu $ are in the interior of $ [a,b] $.
    Therefore, $ \ind_{[a,b]}(\mu) = \ind_{\infty}(\mu) > \frac{n+1}{2} $.
    By \cite[Theorem III.4.1]{KN1}, this implies that $ \mathbf{s} $ is strictly positive on $ [a,b] $, contradictorily to our assumption. The proof of (ii) is a straightforward modification of the above reasoning (we leave details to the reader).
\end{proof}
Below we present one of the main results in this section, which reveals the relationship between index and strict positivity.
\begin{theorem}
    \label{ThmStrictlyPositiveSequencesInfiniteInterval}
    Let $ n\in\mathbb{N} $ and let $ I $ be either $ (0,\infty) $ or $ (0,1] $. For a positive sequence $ \mathbf{s} = (s_0,s_1,\ldots,s_n)\subset
    [0,\infty) $ on $ I $ the following conditions are equivalent:
    \begin{enumerate}
        \item $ \mathbf{s} $ is strictly positive on $ I $,
        \item $ \ind_{I}(\mathbf{s}) = \begin{cases}
            \left\lceil \frac{n+1}{2}\right\rceil, & \text{if } I = (0,\infty),\\
            \frac{n+1}{2}, & \text{if } I = (0,1]
        \end{cases} $,
        \item $ \mathbf{s} $ is indeterminate on $ I $.
    \end{enumerate}
\end{theorem}
\begin{proof}
    We will show the equivalence in the case $ I = (0,\infty) $. The proof in the case $ I=(0,1] $ (as a straightforward modification of the presented reasoning) is left to the reader.\\ 
    (i)$ \Longrightarrow $(ii). From Lemma \ref{LemIndexEstimationInfiniteInterval} it follows that $
    \ind_{\infty}(\mathbf{s}) \le \left\lceil \frac{n+1}{2}\right\rceil $.
    Suppose to the contrary that $ \ind_{\infty}(\mathbf{s}) < \left\lceil
    \frac{n+1}{2}\right\rceil $. Let $ J_{1}\subset (0,\infty) $ be a compact interval such that $ \mathbf{s} $ is
    strictly positive on $ J_{1} $. Hence, by \cite[Theorems III.4.1 and III.5.1]{KN1}, $
    \ind_{J_{1}}(\mathbf{s}) = \frac{n+1}{2} $.  Take any $
    \mu\in\mathcal{M}_{\infty}(\mathbf{s}) $ satisfying $ \ind_{\infty}(\mu)
    = \ind_{\infty}(\mathbf{s}) $. Then $ \mu\in\mathcal{M}_{J_{2}}(\mathbf{s}) $ for some compact interval $ J_{2}\subset (0,\infty) $; without loss of generality we can assume that all atoms of $ \mu $ are in the interior of $ J_{2} $. Take a compact interval $ J\subset (0,\infty) $ such that $
    J_{1}\cup J_{2}\subset J $. Since $ \mathbf{s} $ is strictly positive on $ J $, by \cite[Theorem III.4.1]{KN1} we have that every $ \nu\in
    \mathcal{M}_{J}(\mathbf{s}) $ satisfies $ \ind_{J}(\nu) \ge
    \frac{n+1}{2} $. We also have $ \mu\in\mathcal{M}_{J}(\mathbf{s}) $ and $
    \supp\mu\subset \intr J $. If $ n=2m+1 $, $
    m\in\mathbb{N} $, then
    \begin{equation*}
        \ind_{J}(\mu) =
        \ind_{\infty}(\mu) < \frac{n+1}{2}.
    \end{equation*} If $ n = 2m $, $ m\in\mathbb{N} $,
    then $ \ind_{\infty}(\mu) \le \frac{n}{2} $, so
    \begin{equation*}
        \ind_{J}(\mu) = \ind_{\infty}(\mu) \le \frac{n}{2} < \frac{n+1}{2}.
    \end{equation*}
    In both cases we get a contradiction with \cite[Theorem III.4.1]{KN1}. \\
    (ii)$ \Longrightarrow $(iii). Take $ \mu\in\mathcal{M}_{\infty}(\mathbf{s}) $
    such that $ \ind_{\infty}(\mu) = \left\lceil \frac{n+1}{2}\right\rceil $. Then $ \mu\in\mathcal{M}_{J}(\mathbf{s}) $ for some compact interval $ J \subset (0,\infty) $; without loss of generality we can
    assume $ \supp\mu \subset \intr J $. From this, it follows that 
    \begin{equation*}
        \ind_{J}(\mu) = \ind_{\infty}(\mu) = \left\lceil
        \frac{n+1}{2}\right\rceil \ge \frac{n+1}{2},
    \end{equation*} so, by \cite[Theorem III.4.1]{KN1}, $ \mathbf{s} $ is indeterminate on $ J $ and, consequently, indeterminate on $ (0,\infty) $.\\
    (iii)$ \Longrightarrow $(i). Suppose $ \mu_1,\mu_2\in\mathcal{M}_{\infty}(\mathbf{s}) $ are distinct. Without loss of generality we can assume
    that $ \mu_{1},\mu_{2} $ are supported in the common compact interval $ J\subset (0,\infty) $. By \cite[Theorem III.4.1]{KN1}, $ \mathbf{s} $ is strictly positive on $ J $, hence strictly positive on $ (0,\infty) $.
\end{proof}
\begin{corollary}
    \label{CorUniqueMinimalMeasureStrictlyPositiveInfiniteInterval}
    If $ n\in\mathbb{N} $ is odd and $ \mathbf{s} = (s_0,\ldots, s_n)\subset [0,\infty) $ is strictly positive on $ (0,\infty) $, then there exists the unique measure $ \mu\in\mathcal{M}_{\infty}(\mathbf{s}) $ with $ \ind_{\infty}(\mu) =  \left\lceil \frac{n+1}{2}\right\rceil $; its atoms are roots of the polynomial
    \begin{equation}
        \label{FormPolynomialOfMinimalMeasureOfStrictlyPositiveInfiniteInterval}
        Q(t) = \det\begin{bmatrix}
        s_0 & s_1 & \ldots & s_{m-1} & 1\\
        s_1 & s_2 & \ldots & s_{m} & t\\
        \vdots & \vdots & \vdots & \vdots & \vdots\\
        s_{m} & s_{m+1} & \ldots & s_{2m-1} & t^{m}
        \end{bmatrix},
    \end{equation}
    where $ n = 2m-1 $, $ m\in\mathbb{N}_{1} $.
\end{corollary}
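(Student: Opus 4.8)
The plan is to reduce the statement to the compact-interval theory of Corollary~\ref{CorStrictlyPositiveSequncePrincipalMeasuresCompactInterval} by an enlargement trick. Write $ n = 2m-1 $, so that $ \left\lceil\frac{n+1}{2}\right\rceil = m = \frac{n+1}{2} $. By Theorem~\ref{ThmStrictlyPositiveSequencesInfiniteInterval}, strict positivity of $ \mathbf{s} $ on $ (0,\infty) $ gives $ \text{ind}_{\infty}(\mathbf{s}) = m $, so a measure $ \mu\in\mathcal{M}_{\infty}(\mathbf{s}) $ with $ \#\text{supp}\,\mu = m $ exists; what remains is uniqueness and the identification of its atoms.

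First I would fix an arbitrary $ \mu\in\mathcal{M}_{\infty}(\mathbf{s}) $ with $ \#\text{supp}\,\mu = m $. Its support lies in some $ [c,d]\subset (0,\infty) $, and by Definition~\ref{DefPositiveSequenceInfiniteInterval} there is an interval $ [a,b]\subset (0,\infty) $ on which $ \mathbf{s} $ is strictly positive. Choosing $ 0<a'<\min\{a,\min\text{supp}\,\mu\} $ and $ b'>\max\{b,\max\text{supp}\,\mu\} $, every atom of $ \mu $ is interior to $ [a',b'] $, and $ \mathbf{s} $ is strictly positive on $ [a',b'] $ (strict positivity passes to larger intervals, as observed after Definition~\ref{DefPositiveSequenceCompactInterval}). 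Under the identification of the Remark following Theorem~\ref{ThmEquivalentConditionForMomentSequenceInfiniteInterval} we have $ \mu\in\mathcal{M}_{a',b'}(\mathbf{s}) $ with $ \text{ind}_{a',b'}(\mu) = \#\text{supp}\,\mu = m = \frac{n+1}{2} $, so by Corollary~\ref{CorStrictlyPositiveSequncePrincipalMeasuresCompactInterval} $ \mu $ is a principal measure of $ \mathbf{s} $ on $ [a',b'] $; since none of its atoms is an endpoint, it must be the \emph{lower} principal measure (the upper principal one has $ a' $ and $ b' $ among its atoms). Hence the atoms of $ \mu $ are exactly the zeros of the polynomial $ \underline{Q} $ from \eqref{FormLowerPrincipalPolynomialOddCase}, which is literally the polynomial $ Q $ of \eqref{FormPolynomialOfMinimalMeasureOfStrictlyPositiveInfiniteInterval}. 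The decisive point is that $ Q $ does not depend on the interval $ [a',b'] $ at all.

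To finish I would record that $ Q $ has degree exactly $ m $: expanding along the last column, the coefficient of $ t^{m} $ is the Hankel determinant $ \det\big((s_{i+j})_{i,j=0}^{m-1}\big) $, and this is positive because $ \mu $ has $ m $ atoms --- for $ c\in\mathbb{R}^{m}\setminus\{0\} $ one has $ \langle (s_{i+j})c,c\rangle = \int_{(0,\infty)}\big(\sum_{i}c_{i}t^{i}\big)^{2}\ddd\mu > 0 $, since a nonzero polynomial of degree at most $ m-1 $ cannot vanish at $ m $ distinct points. Therefore $ Q $ has at most $ m $ roots, and the $ m $ atoms of the (arbitrary) minimal measure above exhaust them; consequently any two minimal measures share the \emph{same} support $ \{\xi_{1},\ldots,\xi_{m}\}\subset (0,\infty) $. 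Their weights are then forced: the system $ \sum_{j=1}^{m}\varrho_{j}\xi_{j}^{k} = s_{k} $, $ k\in\mathbb{N}\cap[0,m-1] $, has an invertible Vandermonde coefficient matrix, so the weights --- and hence the measure --- are uniquely determined. This gives simultaneously the uniqueness and the description of the atoms as the roots of $ Q $.

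I do not anticipate a real obstacle; the only steps needing care are arranging the enlargement so that all atoms become interior while retaining strict positivity, and the elementary bookkeeping for $ Q $ (degree $ m $, and $ Q(0)\neq 0 $, which re-confirms that all atoms lie in $ (0,\infty) $ and follows from positive-definiteness of the shifted Hankel matrix $ (s_{i+j+1})_{i,j=0}^{m-1} $).
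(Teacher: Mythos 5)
Your proof is correct and follows essentially the same route as the paper: take any minimal measure, enlarge the interval so its atoms are interior while strict positivity is preserved, identify it as the lower principal measure via Corollary~\ref{CorStrictlyPositiveSequncePrincipalMeasuresCompactInterval}, and note that the polynomial \eqref{FormLowerPrincipalPolynomialOddCase} is independent of the interval. The only difference is that you spell out the uniqueness step (degree of $Q$ via the positive Hankel determinant, then the Vandermonde system for the weights), which the paper leaves implicit; that is a welcome addition, not a deviation.
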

\begin{proof}
    Suppose $ \mu\in\mathcal{M}_{\infty}(\mathbf{s}) $ is a measure satisfying $ \ind_{\infty}(\mu) = \left\lceil \frac{n+1}{2}\right\rceil = m $. Then $ \mu\in\mathcal{M}_{[a,b]}(\mathbf{s}) $ for some interval $ [a,b]\subset (0,\infty) $. Without loss of generality we can assume that $ \supp \mu \subset (a,b) $, which implies that $ \ind_{[a,b]}(\mu) = m $. Hence, $ \mu $ has to be a principal measure of $ \mathbf{s} $ on $ [a,b] $. From the fact that all roots are in $ (a,b) $, by \cite[Section III.\S 4.1]{KN1}, it follows that $ \mu $ is the lower principal measure of $ \mathbf{s} $. Therefore, by \cite[Section III.\S 5.3]{KN1}, atoms of $ \mu $ are roots of (\ref{FormPolynomialOfMinimalMeasureOfStrictlyPositiveInfiniteInterval}).
\end{proof}
When $ n $ is even, the situation is not so simple as in the above corollary. Namely, if $ [a,b]\subset (0,\infty) $ is such that $ \mathbf{s} $ is strictly positive on $ [a,b] $, then any principal measure $ \mu\in\mathcal{M}_{[a,b]}(\mathbf{s}) $ has a support of cardinality $ \left\lceil \frac{n+1}{2}\right\rceil $ and its atoms depend on the interval $ [a,b] $ (in particular, they are different for different intervals $ [a,b] $; see \cite[Section III.\S 5.3]{KN1}). Hence, in this case we have infinitely many measures with minimal number of atoms, because $ \mathbf{s} $ is strictly positive also on every interval $ (0,\infty)\supset [a',b']\supset [a,b] $. Nevertheless, we can parametrize the set of all these measures, what will be seen in the next section.\par 
In the case of interval $ (0,1] $, the situation is much simpler, beacause the measure attaining the index of a strictly positive sequence is always unique.
\begin{corollary}
    \label{CorUniqueMinimalMeasureStrictlyPositiveHalfOpenInterval}
    Let $ n\in\mathbb{N} $. If $ \mathbf{s} = (s_0,\ldots,s_n)\subset [0,\infty) $ is strictly positive on $ (0,1] $, then there exists the unique measure $ \mu\in\mathcal{M}_{1}(\mathbf{s}) $ of index $ \frac{n+1}{2} $. Moreover,
    \begin{enumerate}
        \item if $ n = 2m-1 $, $ m\in\mathbb{N}_{1} $, then atoms of $ \mu $ are roots of the polynomial
        \begin{equation}
            \label{FormPolynomialOfMinimalMeasureOddHalfOpenInterval}
            Q(t) = \det\begin{bmatrix}
            s_0 & s_1 & \ldots & s_{m-1} & 1\\
            s_1 & s_2 & \ldots & s_{m} & t\\
            \vdots & \vdots & \vdots & \vdots & \vdots\\
            s_{m} & s_{m+1} & \ldots & s_{2m-1} & t^{m}
            \end{bmatrix},
        \end{equation}
        \item if $ n= 2m $, $ m\in\mathbb{N} $, then atoms of $ \mu $ are roots of the polynomial
        \begin{equation}
            \label{FormPolynomialOfMinimalMeasureEvenHalfOpenInterval}
            Q(t) = (1-t)\det\begin{bmatrix}
            s'_{0} & s'_{1} & \ldots & s'_{m-1} & 1\\
            s'_{1} & s'_{2} & \ldots & s'_{m} & t\\
            \vdots & \vdots & \vdots & \vdots & \vdots\\
            s'_{m} & s'_{m+1} & \ldots & s'_{2m-1} &
            t^{m}
            \end{bmatrix},
        \end{equation}
        where $ s'_{k} = s_{k}-s_{k+1} $, $ k\in\mathbb{N}\cap[0,2m-1] $.
    \end{enumerate}
\end{corollary}
\begin{proof}
    Suppose $ \mu\in\mathcal{M}_{1}(\mathbf{s}) $ is a measure satisfying $ \ind_{1}(\mu) =  \frac{n+1}{2} $. Then $ \mu\in\mathcal{M}_{[a,1]}(\mathbf{s}) $ for some interval $ [a,1]\subset (0,1] $. Without loss of generality we can assume that $ \supp\mu \subset (a,1] $, so that $ \ind_{[a,1]}(\mu) = \frac{n+1}{2} $. Hence, $ \mu $ has to be a principal measure of $ \mathbf{s} $ on $ [a,1] $. If $ n = 2m-1 $, $ m\in\mathbb{N}_{1} $, then, by \cite[Section III.\S 4.1]{KN1}, $ \mu $ is the lower principal measure and, in view of \cite[Section III.\S 5.3]{KN1}, its atoms are roots of (\ref{FormPolynomialOfMinimalMeasureOddHalfOpenInterval}). If $ n = 2m $, $ m\in\mathbb{N} $, then, again by \cite[Section III.\S 4.1]{KN1}, $ \mu $ is the upper principal measure of $ \mathbf{s} $ and, in view of \cite[Section III.\S 5.3]{KN1}, its atoms are roots of (\ref{FormPolynomialOfMinimalMeasureEvenHalfOpenInterval}).
\end{proof}

	\section{Backward extensions of moment sequences on $ (0,\infty) $ and $ (0,1] $}
\label{SecBackwardExtensionsInfiniteInterval}
The question of backward extendibility of (infinite) moment sequence has a well-known answer. In \cite{Wri1} backward extensions were studied using methods of continued fractions. In \cite{Sza1} the methods of reproducing kernel Hilbert spaces are involved in the solution of the backward extendibility problem for (infinite) moment sequence. In this section, we present a new approach to characterization of backward extensions of (truncated) moment sequences using the theory developed in Section \ref{SecMomentsOnInfiniteInterval}.\par
\begin{definition}
    \label{DefBackwardExtensionInfiniteInterval}
    Let $ I\subset \mathbb{R} $ be either $ (0,\infty) $ or $ (0,1] $. Assume $ n\in\mathbb{N} $ and let $ \mathbf{s} = (s_0,s_1,\ldots,s_n)\subset [0,\infty) $ be positive on $ I $. Suppose $ k\in\mathbb{N}_{1} $ and $ s_{-k},\ldots,s_{-1}\in [0,\infty) $. We say that a sequence $ \mathbf{s}' = (s_{-k},\ldots,s_{-1},s_0,\ldots, s_n) $ is a \textit{backward extension} of $ \mathbf{s} $ on $ I $ if $ \mathbf{s}' $ is positive on $ I $.
\end{definition}
Since, by Theorem \ref{ThmStrictlyPositiveSequencesInfiniteInterval}, singularly positive sequences are determinate on $ I $, backward extensions of such sequences are uniquely determined by $ \int_{I} \frac{1}{t}\ddd\mu(t) $, where
$ \mu $ is the unique representing measure. In what follows, we restrict our considerations on backward extensions to the case of strictly positive sequences.\par
If $ \mathbf{s}= (s_0,\ldots,s_n)\subset [0,\infty) $, $ n\in\mathbb{N} $, is a strictly positive moment sequence on $ (0,\infty) $, then we denote\footnote{We stick to the convention that $ \inf \varnothing = \infty $ and $ \sup\varnothing= -\infty $.}
\begin{align*}
    t_{a,b}(\mathbf{s}) &= \inf\left\{\int_{[a,b]}\frac{1}{t}\ddd\mu(t)\!: \mu\in
\mathcal{M}_{[a,b]}(\mathbf{s}) \right\}, & a,b\in(0,\infty),\ a<b,\\
    t_{\infty}(\mathbf{s}) &= \inf_{0<a<b} t_{a,b}(\mathbf{s}), \\
    T_{a,b}(\mathbf{s}) &= \sup\left\{\int_{[a,b]}\frac{1}{t}\ddd\mu(t)\!: \mu\in
    \mathcal{M}_{[a,b]}(\mathbf{s}) \right\}, & a,b\in(0,\infty),\ a<b,\\
    T_{\infty}(\mathbf{s}) &= \sup_{0<a<b} T_{a,b}(\mathbf{s}).
\end{align*}
If $ \mathbf{s} $ is strictly positive on $ (0,1] $, then we set
\begin{align*}
    t_{1}(\mathbf{s}) &= \inf_{0<a<b\le 1} t_{a,b}(\mathbf{s}),\\
    T_{1}(\mathbf{s}) &= \sup_{0<a<b\le 1} T_{a,b}(\mathbf{s}).
\end{align*}
Observe that $ t_{a,b}(\mathbf{s}) $ decreases, when $ b $ increases and $ T_{a,b}(\mathbf{s}) $ increases, when $ a $ decreases. The next technical lemma states that, actually, $ T_{a,b}(\mathbf{s}) $ increases to $ \infty $, which will be crucial in the subsequent results. Before the proof, we need a formulas for limiting values of integral of the reciprocal function on compact intervals. These formulas were given in \cite[Section IV.\S 2.3]{KN1}; unfortunately, they are wrong. The authors used the fact that if $ a,b,x\in\mathbb{R} $, $ x<a<b $, and $ n\in \mathbb{N} $, then the condition $ T_{+}(U) $ (see \cite[p. 109]{KN1}) holds for the function $ (-1)^{n+1}\Omega(t) = \frac{(-1)^{n+1}}{x-t} $, which is not true. Using \cite[P.1.1]{KN1} it can be shown that the condition $ T_{+}(U) $ (see \cite[p. 109]{KN1}) holds for the function $ (-1)^{n}\Omega(t) = \frac{(-1)^{n}}{x-t} $. Hence, if $ \mathbf{s} = (s_{0},s_{1},\ldots,s_{n}) \subset [0,\infty) $ is a strictly positive moment sequence on the interval $ [a,b] $, then the equality (2.10) in \cite[p.116]{KN1} should take the form
\begin{equation*}
    (-1)^{n}\frac{\underline{P}(x)}{\underline{Q}(x)} \le (-1)^{n}\int_{a}^{b} \frac{1}{x-t}\ddd\mu(t) \le (-1)^{n}\frac{\overline{P(x)}}{\overline{Q}(x)}, \qquad \mu\in \mathcal{M}_{[a,b]}(\mathbf{s}),
\end{equation*}
where $ \underline{Q},\overline{Q} $ are polynomials with roots precisely at atoms of lower and upper principal measure, respectively (see \cite[Section III.\S 5.3]{KN1}), and
\begin{align*}
    \underline{P}(x) &=
    \sigma\left(\frac{\underline{Q}(t)-\underline{Q}(x)}{t-x}\right),\\
    \overline{P}(x) &= \sigma\left(\frac{\overline{Q}(t)-\overline{Q}(x)}{t-x}\right).
\end{align*}
Taking into account the rest of the reasoning in \cite[Section IV.\S 2.3]{KN1}, we obtain that for $ x = 0 $,
\begin{align}
    \label{FormBackwardExtensionCompactIntervalInfimum}
    \inf\left\{ (-1)^{n+1}\int_{[a,b]}\frac{1}{t}\ddd\mu(t)\!: \mu\in \mathcal{M}_{[a,b]}(\mathbf{s}) \right\} = (-1)^{n}\frac{\underline{P}(0)}{\underline{Q}(0)}, \\
    \label{FormBackwardExtensionCompactIntervalSupremum}
    \sup\left\{ (-1)^{n+1}\int_{[a,b]}\frac{1}{t}\ddd\mu(t)\!: \mu\in \mathcal{M}_{[a,b]}(\mathbf{s}) \right\} = (-1)^{n}\frac{\overline{P}(0)}{\overline{Q}(0)},
\end{align}
What is more, since $ \mathcal{M}_{[a,b]}(\mathbf{s}) $ is convex and compact in the weak topology (the latter can be deduced from Banach-Alaoglu and Riesz representation theorems), we have
\begin{equation*}
    \left\{ (-1)^{n+1}\int_{[a,b]}\frac{1}{t}\ddd\mu(t)\!: \mu\in \mathcal{M}_{[a,b]}(\mathbf{s}) \right\} = \left[(-1)^{n}\frac{\underline{P}(0)}{\underline{Q}(0)}, (-1)^{n}\frac{\overline{P}(0)}{\overline{Q}(0)}\right].
\end{equation*}

\begin{lemma}
    \label{LemSupremumOfSupremumsInfiniteInterval}
    Let $ n\in\mathbb{N} $. If $ \mathbf{s} = (s_0,s_1,\ldots,s_n) \subset [0,\infty) $ is strictly positive on $ (0,\infty) $ $ ( $resp. on $ (0,1] $$ ) $, then $ T_{\infty}(\mathbf{s}) = \infty $ $ ( $resp. $ T_{1}(\mathbf{s}) = \infty $$ ) $.
\end{lemma}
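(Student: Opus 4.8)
The plan is to prove $T_\infty(\mathbf{s})=\infty$ by producing, for each small $\varepsilon>0$, a measure representing $\mathbf{s}$ on an interval with left endpoint $\varepsilon$ that carries a \emph{fixed} amount of mass $\lambda>0$ at the point $\varepsilon$. Such a measure $\mu$ automatically satisfies $\int\frac1t\ddd\mu\ge\lambda/\varepsilon$, so letting $\varepsilon\to0^{+}$ will force the supremum defining $T_\infty(\mathbf{s})$ to be infinite.

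To carry this out, I would first fix an interval $[a_0,b_0]\subset(0,\infty)$ on which $\mathbf{s}$ is strictly positive (available by Definition~\ref{DefPositiveSequenceInfiniteInterval}). The key observation is that strict positivity on this \emph{fixed} interval is an open condition on the sequence: by Theorem~\ref{ThmCharacterizationOfPositiveByMatricesCompactInterval} it is equivalent to positivity of two symmetric real matrices whose entries are affine functions of $(s_0,\ldots,s_n)$, and the set of positive symmetric matrices is open, so there is $\delta>0$ such that every sequence within $\delta$ (coordinatewise) of $\mathbf{s}$ is still strictly positive on $[a_0,b_0]$.

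Then I would fix $\lambda\in(0,\delta)$ and, for $\varepsilon\in(0,\min\{a_0,1\})$, look at $\mathbf{s}_\varepsilon:=(s_0-\lambda,\,s_1-\lambda\varepsilon,\ldots,s_n-\lambda\varepsilon^{n})$. Since $\lambda\varepsilon^{k}\le\lambda<\delta$ for all $k$, the sequence $\mathbf{s}_\varepsilon$ is strictly positive on $[a_0,b_0]$, hence also on the larger interval $[\varepsilon,b_0]$ (by the remark after Definition~\ref{DefPositiveSequenceCompactInterval}), hence a moment sequence there by Theorem~\ref{ThmEquivalentConditionForMomentSequenceCompactInterval}. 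Choosing $\nu_\varepsilon\in\mathcal{M}_{\varepsilon,b_0}(\mathbf{s}_\varepsilon)$ and adjoining to $\nu_\varepsilon$ an atom of mass $\lambda$ at $\varepsilon$ produces $\mu_\varepsilon\in\mathcal{M}_{\varepsilon,b_0}(\mathbf{s})$ (the term $\lambda\varepsilon^{k}$ is restored in the $k$-th moment), and then
\[
T_\infty(\mathbf{s})\;\ge\;T_{\varepsilon,b_0}(\mathbf{s})\;\ge\;\int_{[\varepsilon,b_0]}\frac1t\ddd\mu_\varepsilon(t)\;\ge\;\frac{\lambda}{\varepsilon}\,,
\]
so letting $\varepsilon\to0^{+}$ finishes the proof.

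The step I expect to be the crux is the openness of strict positivity under perturbation of the moment data — it is exactly what allows one to subtract a fixed point mass at a point tending to $0$ without leaving the class of moment sequences; the rest is bookkeeping with moments. It is also essential that $\lambda$ be chosen independently of $\varepsilon$, which is why I would stay inside the fixed interval $[a_0,b_0]$ rather than following a principal measure whose leftmost atom is driven toward $0$ — the latter would require a more delicate analysis of how the mass at that atom degenerates.
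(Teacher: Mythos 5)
Your argument is correct, and it takes a genuinely different route from the paper. The paper fixes $b$ and computes the exact limit of $T_{a,b}(\mathbf{s})$ as $a\to 0^{+}$ by identifying $T_{a,b}(\mathbf{s})$ with the ratio of determinants coming from the principal measures (formula \eqref{FormBackwardExtensionCompactInterval} together with \eqref{FormUpperPrincipalPolynomialOddCase} resp.\ \eqref{FormLowerPrincipalPolynomialEvenCase}), then splits that ratio into a term with a finite limit and a term that behaves like a positive constant times $\tfrac{1}{a}$; this requires separate treatment of the odd and even cases and a fair amount of multilinear bookkeeping. You instead produce an explicit element of $\mathcal{M}_{\varepsilon,b_0}(\mathbf{s})$ carrying a fixed mass $\lambda$ at $\varepsilon$, which only gives a lower bound $T_{\varepsilon,b_0}(\mathbf{s})\ge \lambda/\varepsilon$ — but that is all the lemma needs. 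The one step you flag as the crux, openness of strict positivity on the \emph{fixed} interval $[a_0,b_0]$, is indeed sound: by Theorem~\ref{ThmCharacterizationOfPositiveByMatricesCompactInterval} strict positivity on $[a_0,b_0]$ is positivity of two symmetric matrices whose entries are affine in $(s_0,\ldots,s_n)$, and positive definiteness is an open condition; since $\lambda\varepsilon^{k}\le\lambda<\delta$ for $\varepsilon<1$, the perturbed sequence stays strictly positive on $[a_0,b_0]$, hence on $[\varepsilon,b_0]$, hence is a moment sequence there, and adjoining $\lambda\delta_{\varepsilon}$ restores the moments of $\mathbf{s}$ exactly. (Minor remarks: strict positivity forces $s_0>0$, so $s_0-\lambda>0$ for small $\lambda$, and a polynomial vanishing on $[a_0,b_0]$ vanishes identically, so the enlargement of the interval preserves strict positivity as the paper's remark asserts.) Your proof is shorter, parity-free, and conceptually cleaner; what the paper's computation buys in exchange is the precise asymptotics of $T_{a,b}(\mathbf{s})$ and its expression through the principal polynomials, which is in the spirit of the explicit formulas used elsewhere in Sections \ref{SecBackwardExtensionsInfiniteInterval} and \ref{SecSubnormalCompletions}, though none of that extra information is invoked in later proofs that cite this lemma.
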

\begin{proof}
    Assume that $ \mathbf{s} $ is strictly positive on $ (0,\infty) $. Fix $ b\in (0,\infty) $ such that $ \mathbf{s} $ is strictly positive on $ [a,b] $ for some $ a\in (0,b) $. Then, it holds $ T_{a,b}(\mathbf{s}) > -\infty $. By the previous remark, it is enough to show
    that $ T_{a,b}(\mathbf{s}) \stackrel{a\to0+}{\longrightarrow}\infty $.  For the further use, let us introduce the following notation: if $ N\in\mathbb{N} $ is odd and $ \mathbf{t} = (t_0,\ldots,t_N) \subset \mathbb{R} $ and $ \mathbf{f} =  (f_{0},\ldots, f_{\frac{N+1}{2}})\subset \mathbb{R} $, then we denote
    \begin{align*}
        \mathcal{D}(\mathbf{t}, \mathbf{f}) &= \det \begin{bmatrix}
        t_0 & t_1 & \ldots & t_{\frac{N+1}{2}-1} & f_0 \\
        t_1 & t_2 & \ldots & t_{\frac{N+1}{2}} & f_1\\
        \vdots & \vdots & \vdots & \vdots & \vdots \\
        t_{\frac{N+1}{2}} & t_{\frac{N+1}{2}+1} & \ldots & t_{N} & f_{\frac{N+1}{2}}
        \end{bmatrix}
    \end{align*}
    and
    \begin{align*}
        \mathcal{E}(\mathbf{t}, \mathbf{f}) &= \det\begin{bmatrix}
        f_0 & t_0 & t_1 & \ldots & t_{\frac{N+1}{2}-1}\\
        f_1 & t_1 & t_2 & \ldots & t_{\frac{N}{2}}\\
        \vdots & \vdots & \vdots & \vdots & \vdots \\
        f_{\frac{N+1}{2}}& t_{\frac{N+1}{2}} & t_{\frac{N+1}{2}+1} & \ldots & t_{N}
        \end{bmatrix}
    \end{align*}
    Case 1: $ n = 2m-1 $, $ m\in\mathbb{N}_{1} $. In this case, by \eqref{FormBackwardExtensionCompactIntervalSupremum}, $ T_{a,b} =
    -\frac{\overline{P}(0)}{\overline{Q}(0)} $, where $ a\in (0,b) $ is such that $ \mathbf{s} $ is strictly positive on $ [a,b] $. Applying the formula for $ \overline{Q} $ from \cite[Section III.\S 5.3]{KN1}, by multilinearity of the determinant, we obtain the following:
    \begin{align*}
        \overline{P}(0) &= \sigma\left(\frac{\overline{Q}(t)-\overline{Q}(0)}{t}\right) = \mathcal{D}\left((s_{k}')_{k=0}^{2m-3}, ((s_{k-1}'))_{k=0}^{m-1} \right),
    \end{align*}
    where $ s_{k}' = (a+b)s_{k+1}-abs_{k}-s_{k+2} $ for $ k\in\mathbb{N}\cap [0,2m-2] $ and $ s_{-1}=0 $. Set $ s^{(1)}_{k} = s_{k+1}-bs_{k} $ for $ k\in\mathbb{Z}\cap [-1,2m-2] $. Then, again by multilinearity of the determinant, we obtain
    \begin{align}
        \label{FormUpperBoundIntoTwoComponentsOddCase}
        -\frac{\overline{P}(0)}{\overline{Q}(0)} &= \frac{\mathcal{D}\left((s_{k}')_{k=0}^{2m-3}, (s_{k-1}')_{k=0}^{m-1} \right)}{ab\mathcal{D}\left((s_{k}')_{k=0}^{2m-3}, (1,0,\ldots,0) \right)}\\
        \notag &= \frac{\mathcal{D}\left((s_{k}')_{k=0}^{2m-3}, (as^{(1)}_{k-1})_{k=0}^{m-1} \right)}{ab\mathcal{D}\left((s_{k}')_{k=0}^{2m-3}, (1,0,\ldots,0) \right)} + \frac{\mathcal{D}\left((s_{k}')_{k=0}^{2m-3}, (-s^{(1)}_{k})_{k=0}^{m-1} \right)}{ab\mathcal{D}\left((s_{k}')_{k=0}^{2m-3}, (1,0,\ldots,0) \right)} 
    \end{align}
    Now, we calculate the limit of the first term of the right hand side of \eqref{FormUpperBoundIntoTwoComponentsOddCase}. Again, by basic properties of the determinant, we have
    \begin{align*}
        \frac{\mathcal{D}\left((s_{k}')_{k=0}^{2m-3}, (as^{(1)}_{k-1})_{k=0}^{m-1} \right)}{ab\mathcal{D}\left((s_{k}')_{k=0}^{2m-3}, (1,0,\ldots,0) \right)} &= \frac{\mathcal{D}\left((s_{k}')_{k=0}^{2m-3}, (s^{(1)}_{k-1})_{k=0}^{m-1} \right)}{b\mathcal{D}\left((s_{k}')_{k=0}^{2m-3}, (1,0,\ldots,0) \right)}.
    \end{align*}
    Observe that continuity of the determinant gives us
    \begin{equation*}
        \mathcal{D}\left((s_{k}')_{k=0}^{2m-3}, (1,0,\ldots,0) \right) \stackrel{a\to 0+}{\longrightarrow} \mathcal{D}\left((-s^{(1)}_{k+1})_{k=0}^{2m-3}, (1,0,\ldots,0) \right)
    \end{equation*}
    and that the above limit is non-zero by \cite[Theorem 2.4]{KN1} and \cite[Remark 2.1]{KN1}. Summarizing, we have
    \begin{equation}
        \label{FormUpperBoundFirstTermOddCase}
        \frac{\mathcal{D}\left((s_{k}')_{k=0}^{2m-3}, (as^{(1)}_{k-1})_{k=0}^{m-1} \right)}{ab\mathcal{D}\left((s_{k}')_{k=0}^{2m-3}, (1,0,\ldots,0) \right)}\stackrel{a\to 0+}{\longrightarrow} \frac{\mathcal{D}\left((s^{(1)}_{k+1})_{k=0}^{2m-3}, (s^{(1)}_{k-1})_{k=0}^{m-1} \right)}{b\mathcal{D}\left((-s^{(1)}_{k+1})_{k=0}^{2m-3}, (1,0,\ldots,0) \right)}
    \end{equation}
    Next, we calculate the limit of the second term of the right hand sided of \eqref{FormUpperBoundIntoTwoComponentsOddCase}. By the properties of the determinant, we see that
    \begin{align}
        \label{FormUpperBoundSecondTermNOddCase}
        \mathcal{D}\left((s_{k}')_{k=0}^{2m-3}, (-s^{(1)}_{k})_{k=0}^{m-1} \right) &\stackrel{a\to0+}{\longrightarrow} \mathcal{D}\left((-s^{(1)}_{k+1})_{k=0}^{2m-3}, (-s^{(1)}_{k})_{k=0}^{m-1} \right)\\
        \notag &= (-1)^{m}\mathcal{D}\left((-s^{(1)}_{k})_{k=0}^{2m-3}, (-s^{(1)}_{k})_{k=m}^{2m-1} \right)
    \end{align}
    where, by \cite[Theorem III.2.4]{KN1} and \cite[Remark III.2.1]{KN1},
    \begin{equation}
        \mathcal{D}\left((-s^{(1)}_{k})_{k=0}^{2m-3}, (-s^{(1)}_{k})_{k=m}^{2m-1} \right) > 0.
    \end{equation} Similarly, we have
    \begin{align}
        \label{FormUpperBoundSecondTermDOddCase}
        \mathcal{D}\left((s_{k}')_{k=0}^{2m-3}, (1,0,\ldots,0) \right) &\stackrel{a\to0+}{\longrightarrow} \mathcal{D}\left((-s^{(1)}_{k+1})_{k=0}^{2m-3}, (1,0,\ldots,0) \right)\\
        \notag &= (-1)^{m}\mathcal{E}\left((-s^{(1)}_{k+1})_{k=0}^{2m-3}, (1,0,\ldots,0) \right),
    \end{align}
    where, arguing as before, $ \mathcal{E}\left((-s^{(1)}_{k+1})_{k=0}^{2m-3}, (1,0,\ldots,0) \right) > 0 $. Using $ \eqref{FormUpperBoundSecondTermNOddCase} $ and \eqref{FormUpperBoundSecondTermDOddCase}, we obtain
    \begin{align*}
        \frac{\mathcal{D}\left((s_{k}')_{k=0}^{2m-3}, (-s^{(1)}_{k})_{k=0}^{m-1} \right)}{b\mathcal{D}\left((s_{k}')_{k=0}^{2m-3}, (1,0,\ldots,0) \right)} \stackrel{a\to0+}{\longrightarrow} \frac{\mathcal{D}\left((-s^{(1)}_{k})_{k=0}^{2m-3}, (-s^{(1)}_{k})_{k=m}^{2m-1} \right)}{b\mathcal{E}\left((-s^{(1)}_{k+1})_{k=0}^{2m-3}, (1,0,\ldots,0) \right)} >0.
    \end{align*}
    Since $ \frac{1}{a} \to \infty $ as $ a\to 0+ $, it follows that
    \begin{align}
        \label{FormUpperBoundSecondTermOddCase}
        \frac{\mathcal{D}\left((s_{k}')_{k=0}^{2m-3}, (-s^{(1)}_{k})_{k=0}^{m-1} \right)}{ab\mathcal{D}\left((s_{k}')_{k=0}^{2m-3}, (1,0,\ldots,0) \right)} \stackrel{a\to 0+}{\longrightarrow} \infty.
    \end{align}
    Combining \eqref{FormUpperBoundIntoTwoComponentsOddCase}, \eqref{FormUpperBoundFirstTermOddCase} and \eqref{FormUpperBoundSecondTermOddCase}, we get that $ T_{a,b}(\mathbf{s}) \stackrel{a\to0+}{\longrightarrow}
    \infty $.\\
    Case 2: $ n = 2m $, $ m\in\mathbb{N} $. In this case, by \eqref{FormBackwardExtensionCompactIntervalInfimum}, $ T_{a,b}(\mathbf{s}) = -\frac{\underline{P}(0)}{\underline{Q}(0)} $. Applying the formula for $ \underline{Q} $ from \cite[Section III.\S 5.3]{KN1}, by multilinearity of the determinant, we obtain
    \begin{align*}
        \underline{P}(0) = \sigma\left(\frac{\underline{Q}(t)-\underline{Q}(0)}{t} \right) = \mathcal{D}\left((s_{k+1}-as_{k})_{k=0}^{2m-1}, (s_k-as_{k-1})_{k=0}^{m} \right),
    \end{align*}
    where $ s_{-1} =0  $. Set $ s^{(1)}_{k} = s_{k+1}-as_{k} $ for $ k\in\mathbb{Z}\cap [-1,2m-2] $.
    Then, again using the formula from \cite[Section III.\S 5.3]{KN1}, by properties of the determinant, we see that
    \begin{align}
        \label{FormUpperBoundIntoTwoComponentsEvenCase}
        -\frac{\underline{P}(0)}{\underline{Q}(0)} &= \frac{\mathcal{D}\left((s^{(1)}_{k})_{k=0}^{2m-1}, (-as_{k-1})_{k=0}^{m} \right)}{a\mathcal{D}\left((s^{(1)}_{k})_{k=0}^{2m-1}, (1,0,\ldots,0)\right)} \\
        \notag &+ \frac{\mathcal{D}\left((s^{(1)}_{k})_{k=0}^{2m-1}, (s_k)_{k=0}^{m} \right)}{a\mathcal{D}\left((s^{(1)}_{k})_{k=0}^{2m-1}, (1,0,\ldots,0)\right)}
    \end{align}
    Now, we compute the limit of the first term of the right hand sided of \eqref{FormUpperBoundIntoTwoComponentsEvenCase}. Arguing similarly as to obtain \eqref{FormUpperBoundFirstTermOddCase}, we have
    \begin{align*}
    \frac{\mathcal{D}\left((s^{(1)}_{k})_{k=0}^{2m-1}, (-as_{k-1})_{k=0}^{m} \right)}{a\mathcal{D}\left((s^{(1)}_{k})_{k=0}^{2m-1}, (1,0,\ldots,0)\right)} &= \frac{\mathcal{D}\left((s^{(1)}_{k})_{k=0}^{2m-1}, (-s_{k-1})_{k=0}^{m} \right)}{\mathcal{D}\left((s^{(1)}_{k})_{k=0}^{2m-1}, (1,0,\ldots,0)\right)}\\
    &\stackrel{a\to0+}{\longrightarrow} \frac{\mathcal{D}\left((s_{k+1})_{k=0}^{2m-1}, (-s_{k-1})_{k=0}^{m} \right)}{\mathcal{D}\left((s_{k+1})_{k=0}^{2m-1}, (1,0,\ldots,0)\right)} > 0.
    \end{align*}
    Next, we compute the limit of the second term of the right hand sided of \eqref{FormUpperBoundIntoTwoComponentsEvenCase}. Using the same reasoning as to obtain \eqref{FormUpperBoundSecondTermNOddCase} and \eqref{FormUpperBoundSecondTermDOddCase}, we get
    \begin{align*}
        \mathcal{D}\left((s^{(1)}_{k})_{k=0}^{2m-1}, (s_k)_{k=0}^{m} \right) &\stackrel{a\to 0+}{\longrightarrow} \mathcal{D}\left((s_{k+1})_{k=0}^{2m-1}, (s_k)_{k=0}^{m} \right)\\
        \notag &= (-1)^{m}\mathcal{D}\left((s_{k})_{k=0}^{2m-1}, (s_k)_{k=m}^{2m} \right),
    \end{align*}
    and
    \begin{align*}
        \mathcal{D}\left((s^{(1)}_{k})_{k=0}^{2m-1}, (1,0,\ldots,0)\right) &\stackrel{a\to0+}{\longrightarrow} \mathcal{D}\left((s_{k+1})_{k=0}^{2m-1}, (1,0,\ldots,0)\right)\\
        \notag &= (-1)^{m}\mathcal{E}\left((s_{k+1})_{k=0}^{2m-1}, (1,0,\ldots,0)\right).
    \end{align*}
    By \cite[Theorem III.2.3]{KN1} and \cite[Remark III.2.1]{KN1}, we see that
    \begin{equation*}
        \mathcal{D}\left((s_{k})_{k=0}^{2m-1}, (s_k)_{k=m}^{2m} \right) >0, \ \mathcal{E}\left((s_{k+1})_{k=0}^{2m-1}, (1,0,\ldots,0)\right)>0.
    \end{equation*} Hence, arguing as to obtain \eqref{FormUpperBoundSecondTermOddCase}, we have
    \begin{align*}
        \frac{\mathcal{D}\left((s^{(1)}_{k})_{k=0}^{2m-1}, (s_k)_{k=0}^{m} \right)}{a\mathcal{D}\left((s^{(1)}_{k})_{k=0}^{2m-1}, (1,0,\ldots,0)\right)} \stackrel{a\to 0+}{\longrightarrow} \infty.
    \end{align*}
    Summarizing, we get $ T_{a,b}(\mathbf{s}) \stackrel{a \to 0+}{\longrightarrow}\infty $. If $ \mathbf{s} $ is assummed to be strictly positive on $ (0,1] $, then the same proof gives us the equality $ T_{1}(\mathbf{s}) = \infty $.
\end{proof}
The following lemma gives us the necessary condition on the one-step backward extension of a strictly positive sequence.
\begin{lemma}
    \label{LemWhereBackwardExtensionBelongsInfiniteInterval}
    Let $ n\in\mathbb{N} $. If $ \mathbf{s}' = (s_{-1},s_0,\ldots,s_n)\subset [0,\infty) $ is a backward extension of a strictly
    positive sequence $ \mathbf{s} = (s_0,s_1,\ldots,s_n) $ on $ (0,\infty) $ $ ( $resp. on $ (0,1] $$ ) $, then $ s_{-1}\in
    [t_{\infty}(\mathbf{s}), \infty) $ $ ( $resp. $ s_{-1}\in [t_{1}(\mathbf{s}), \infty) $$ ) $.
\end{lemma}
\begin{proof}
    Assume that $ \mathbf{s}' $ is strictly positive on $ (0,\infty) $ and let $ \mu\in \mathcal{M}_{\infty}(\mathbf{s}') $. Then $ \mu\in\mathcal{M}_{[a,b]}(\mathbf{s}') $ for some interval $ [a,b]\subset (0,\infty) $; without loss of generality we can assume that $ \supp\,\mu \subset (a,b) $. For $ \nu = t\ddd\mu(t) $ we have
    that $ \nu\in\mathcal{M}_{[a,b]}(\mathbf{s}) $, so
    \begin{equation*}
        s_{-1} = \int_{[a,b]}
        \frac{1}{t}\ddd\nu(t) \in [t_{a,b}(\mathbf{s}), T_{a,b}(\mathbf{s})] \subset
        [t_{\infty}(\mathbf{s}), \infty). \qedhere
    \end{equation*}
    With no substantial changes, the above proof works also when $ \mathbf{s} $ is assumed to be strictly positive on $ (0,1] $.
\end{proof}
We are ready to prove one of the main results of this section, which characterizes strictly positive backward extensions.
\begin{theorem}
    \label{ThmStrictlyPositiveBackwardExtensionsInfiniteInterval}
    Let $ n\in \mathbb{N} $ and let $ \mathbf{s} = (s_0,\ldots,s_n) \subset [0,\infty) $ be strictly
    positive on $ (0,\infty) $ $ ( $resp. on $ (0,1] $$ ) $. Assume $ s_{-1} \in [0,\infty) $. Then the following conditions are
    equivalent:
    \begin{enumerate}
        \item $ \mathbf{s}' = (s_{-1},s_0,\ldots,s_n) $ is a strictly positive backward extension of $ \mathbf{s} $ on $ (0,\infty) $ $ ( $resp. on $ (0,1] $$ ) $,
        \item $ s_{-1} \in (t_{\infty}(\mathbf{s}),\infty) $ $ ( $resp. $ s_{-1} \in (t_{1}(\mathbf{s}),\infty) $$ ) $.
    \end{enumerate}
\end{theorem}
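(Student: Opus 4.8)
The plan is to prove both implications by exploiting the compact-interval theory from Section~\ref{SecMomentsOnCompactInterval} together with Lemma~\ref{LemSupremumOfSupremumsInfiniteInterval} and Lemma~\ref{LemWhereBackwardExtensionBelongsInfiniteInterval}. For (i)$\Rightarrow$(ii), I would first invoke Lemma~\ref{LemWhereBackwardExtensionBelongsInfiniteInterval} to get $s_{-1}\in[t_\infty(\mathbf{s}),\infty)$, so the only thing left to rule out is the boundary value $s_{-1}=t_\infty(\mathbf{s})$. Suppose $\mathbf{s}'$ is strictly positive on some $[a,b]\subset(0,\infty)$; then the sequence $\mathbf{s}$ is also strictly positive on $[a,b]$, and choosing $\mu\in\mathcal{M}_{a,b}(\mathbf{s}')$ with $\mathrm{supp}\,\mu\subset(a,b)$, the measure $\nu=t\,\mathrm{d}\mu\in\mathcal{M}_{a,b}(\mathbf{s})$ realizes $s_{-1}=\int_{[a,b]}\frac1t\,\mathrm{d}\nu$. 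Because $\mathbf{s}'$ is \emph{strictly} positive on $[a,b]$, the measure $\mu$ cannot have minimal index, i.e.\ $\mathrm{ind}_{a,b}(\mu)>\frac{n+1}{2}$ (here $\mathbf{s}'$ has length $n+2$, so $\frac{(n+2)}{2}$ is the relevant threshold for $\mathbf{s}$ seen one degree higher); I would translate this into the statement that $\nu$ is \emph{not} the lower principal measure of $\mathbf{s}$ on $[a,b]$, hence by Theorem~\ref{ThmExtremalValuesOfIntegralCompactInterval} the value $\int_{[a,b]}\frac1t\,\mathrm{d}\nu$ is strictly above the infimum $t_{a,b}(\mathbf{s})\ge t_\infty(\mathbf{s})$. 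This gives $s_{-1}>t_\infty(\mathbf{s})$.

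For (ii)$\Rightarrow$(i), suppose $s_{-1}\in(t_\infty(\mathbf{s}),\infty)$. Fix $b$ and $a_0$ with $\mathbf{s}$ strictly positive on $[a_0,b]$; since $t_{a,b}(\mathbf{s})\searrow t_\infty(\mathbf{s})$ as $a\to0+$ and $T_{a,b}(\mathbf{s})\nearrow\infty$ as $a\to0+$ by Lemma~\ref{LemSupremumOfSupremumsInfiniteInterval}, I can pick $a\in(0,a_0)$ small enough that $t_{a,b}(\mathbf{s})<s_{-1}<T_{a,b}(\mathbf{s})$, so that $s_{-1}$ lies in the \emph{open} interval $(t_{a,b}(\mathbf{s}),T_{a,b}(\mathbf{s}))$. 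By the last sentence before Section~\ref{SecMomentsOnInfiniteInterval} (the ``each value between'' clause following \eqref{FormBackwardExtensionCompactInterval}), every value strictly between the two extremal endpoints is attained as $\int_{[a,b]}\frac1t\,\mathrm{d}\mu$ for some $\mu\in\mathcal{M}_{a,b}(\mathbf{s})$; moreover, since $s_{-1}$ is strictly interior, the attaining measure $\mu$ is not a principal measure, hence $\mathrm{ind}_{a,b}(\mu)>\frac{n+1}{2}$ and one can arrange $\mathrm{supp}\,\mu\subset(a,b)$. Then $\nu=\frac1t\,\mathrm{d}\mu$ satisfies $\nu\in\mathcal{M}_{a,b}(\mathbf{s}')$ with all atoms interior, and $\mathrm{ind}_{a,b}(\nu)=\#\mathrm{supp}\,\nu=\#\mathrm{supp}\,\mu\ge\frac{n+3}{2}>\frac{n+2}{2}$, so by Theorem~\ref{ThmEquivalentConditionForDeterminateSequenceCompactInterval} applied to the sequence $\mathbf{s}'$ of length $n+2$, the sequence $\mathbf{s}'$ is strictly positive on $[a,b]$, hence strictly positive on $(0,\infty)$.

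The main obstacle I anticipate is the bookkeeping around indices and parities: one must carefully track that a measure $\mu$ which is \emph{non-principal} for $\mathbf{s}$ on $[a,b]$ pushes forward (under multiplication or division by $t$) to a measure which is \emph{above} minimal index for the shifted sequence, and conversely; the even/odd cases of $n$ enter through the precise form of the principal measures in Corollary~\ref{CorStrictlyPositiveSequncePrincipalMeasuresCompactInterval}, and through whether the endpoints $a,b$ are atoms. A secondary technical point is ensuring that the interval can always be enlarged or shrunk so that the relevant measure has support in the open interval $(a,b)$ without destroying strict positivity of $\mathbf{s}$ there; this uses the observation (recorded just after Definition~\ref{DefPositiveSequenceCompactInterval}) that strict positivity persists under enlarging the interval, together with the monotonicity of $t_{a,b}$ and $T_{a,b}$ noted before Lemma~\ref{LemSupremumOfSupremumsInfiniteInterval}.
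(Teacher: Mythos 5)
Your proposal is correct and follows essentially the same route as the paper's proof: both directions transport measures between $\mathcal{M}_{a,b}(\mathbf{s})$ and $\mathcal{M}_{a,b}(\mathbf{s}')$ by multiplying or dividing by $t$, identify non-principal measures with values of $\int\frac{1}{t}\,\text{d}\mu$ strictly between the extremes via Theorem~\ref{ThmExtremalValuesOfIntegralCompactInterval}, and use Lemma~\ref{LemSupremumOfSupremumsInfiniteInterval} to realize any $s_{-1}>t_{\infty}(\mathbf{s})$ inside some open interval $(t_{a,b}(\mathbf{s}),T_{a,b}(\mathbf{s}))$. Two harmless slips: the bound $\#\text{supp}\,\mu\ge\frac{n+3}{2}$ should be $\ge\frac{n+2}{2}$ when $n$ is even (still above the needed threshold $\frac{n+1}{2}$), and the claim that $t_{a,b}(\mathbf{s})\searrow t_{\infty}(\mathbf{s})$ as $a\to0+$ with $b$ fixed is not among the recorded monotonicity facts --- it is cleaner to enlarge the interval in both directions, as the paper does when writing $(t_{\infty}(\mathbf{s}),\infty)$ as the union of the intervals $(t_{a,b}(\mathbf{s}),T_{a,b}(\mathbf{s}))$.
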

\begin{proof}
    We concentrate on the case, when $ \mathbf{s} $ is strictly positive on $ (0,\infty) $.\\
    (i)$ \Longrightarrow $(ii). By Theorem \ref{ThmStrictlyPositiveSequencesInfiniteInterval}, $ 
    \ind_{\infty}(\mathbf{s}') = \left\lceil \frac{n+2}{2}\right\rceil $. Let $ \mu\in\mathcal{M}_{\infty}(\mathbf{s}') $ be any measure satisfying $ \ind_{\infty}(\mu) = \left\lceil
    \frac{n+2}{2}\right\rceil $. Then $ \mu\in\mathcal{M}_{[a,b]}(\mathbf{s}') $ for some interval $ [a,b]\subset(0,\infty) $; without loss of generality we can assume $ \supp\,\mu\subset (a,b) $. This implies that $ \ind_{[a,b]}(\mu) =
    \left\lceil \frac{n+2}{2}\right\rceil \ge \frac{n+2}{2} $. For $ \nu = t\ddd\mu(t)
    $ we see that $ \nu\in\mathcal{M}_{[a,b]}(\mathbf{s}) $ and $ \ind_{[a,b]}(\mu) = \ind_{[a,b]}(\nu) \ge
    \frac{n+2}{2} $. By \cite[Theorem IV.1.1]{KN1},
    \[ s_{-1} = \int_{[a,b]} \frac{1}{t}\ddd\nu(t) \in (t_{a,b}(\mathbf{s}),
    T_{a,b}(\mathbf{s})) \subset
    (t_{\infty}(\mathbf{s}),\infty). \]
    \\
    (ii)$ \Longrightarrow $(i). From Lemma \ref{LemSupremumOfSupremumsInfiniteInterval} it follows
    \begin{equation}
    \label{FormSumOfIntervalsInfiniteInterval}
    (t_{\infty}(\mathbf{s}),\infty) = \bigcup_{0<a<b}(t_{a,b}(\mathbf{s}),
    T_{a,b}(\mathbf{s})).
    \end{equation} By \eqref{FormSumOfIntervalsInfiniteInterval}, there exist $ a,b\in(0,\infty) $, $ a<b $, such
    that
    \begin{equation}
    \label{FormBackwardExtensionInOpenIntervalInfiniteInterval}
    s_{-1}\in (t_{a,b}(\mathbf{s}), T_{a,b}(\mathbf{s})).
    \end{equation}
    There exists a measure $ \mu\in\mathcal{M}_{[a,b]}(\mathbf{s}) $ satisfying $
    \int_{[a,b]}\frac{1}{t}\ddd\mu(t) = s_{-1} $. In view of \eqref{FormBackwardExtensionInOpenIntervalInfiniteInterval} and \cite[Theorem IV.1.1]{KN1}, the measure $ \mu $ is not a principal measure, so $
    \ind_{[a,b]}(\mu) \ge \frac{n+2}{2} $. For $ \nu = \frac{1}{t}\ddd\mu(t) $ we have $ \nu\in\mathcal{M}_{[a,b]}(\mathbf{s}') $ and $ \ind_{[a,b]}(\nu) \ge
    \frac{n+2}{2} = \frac{n+1+1}{2} $. By \cite[Theorem III.4.1]{KN1}, this implies that $ \mathbf{s}' $ is strictly
    positive on $ [a,b] $. Hence, $ \mathbf{s}' $ strictly positive on $ (0,\infty) $.
\end{proof}
It turns out that singularly positive backward extensions on $ (0,\infty) $ can appear only when $ n $ is odd.
\begin{corollary}
    \label{CorSingularlyPositiveBackwardExtensionsInfiniteInterval}
    Let $ n\in \mathbb{N} $ and let $ \mathbf{s} = (s_0,\ldots,s_n) \subset [0,\infty) $ be strictly
    positive on $ (0,\infty) $. Assume $ s_{-1} \in [0,\infty) $. Then the following conditions are
    equivalent:
    \begin{enumerate}
        \item $ \mathbf{s}' = (s_{-1},s_0,\ldots,s_n) $ is singularly positive
        backward extension of $ \mathbf{s} $ on $ (0,\infty) $,
        \item $ n $ is odd and $ s_{-1} = t_{\infty}(\mathbf{s}) $.
    \end{enumerate}
    Moreover, if (i) holds, then the atoms of the unique measure $ \mu\in\mathcal{M}_{\infty}(\mathbf{s}') $ are roots of the polynomial $ Q $, where
    \begin{equation}
    \label{FormPolynomialOfMinimalMeasureOfSingularlyPositiveBackwardExtensionInfiniteInterval}
    Q(t) = \det\begin{bmatrix}
    s_0 & s_1 & \ldots & s_{m-1} & 1\\
    s_1 & s_2 & \ldots & s_{m} & t\\
    \vdots & \vdots & \vdots & \vdots & \vdots\\
    s_{m} & s_{m+1} & \ldots & s_{2m-1} & t^{m}
    \end{bmatrix},
    \end{equation}
    and $ n = 2m-1 $, $ m\in\mathbb{N}_{1} $.
\end{corollary}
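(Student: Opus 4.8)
The plan is to establish the equivalence (i)$\Leftrightarrow$(ii) in two steps and then deduce the description of the atoms from the second step. For (i)$\Rightarrow$(ii): if $\mathbf{s}'=(s_{-1},s_0,\ldots,s_n)$ is a singularly positive backward extension of $\mathbf{s}$ on $(0,\infty)$, then Lemma~\ref{LemWhereBackwardExtensionBelongsInfiniteInterval} gives $s_{-1}\ge t_{\infty}(\mathbf{s})$, while Theorem~\ref{ThmStrictlyPositiveBackwardExtensionsInfiniteInterval} forbids $s_{-1}\in(t_{\infty}(\mathbf{s}),\infty)$ (that range produces strictly positive extensions), so $s_{-1}=t_{\infty}(\mathbf{s})$. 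To force $n$ odd I would run a dimension count: since $\mathbf{s}'$ is singularly positive, Theorem~\ref{ThmStrictlyPositiveSequencesInfiniteInterval} gives $\text{ind}_{\infty}(\mathbf{s}')<\lceil\frac{n+2}{2}\rceil$; taking $\nu\in\mathcal{M}_{\infty}(\mathbf{s}')$ with $\#\text{supp}\,\nu=\text{ind}_{\infty}(\mathbf{s}')$ and passing to $t\ddd\nu\in\mathcal{M}_{\infty}(\mathbf{s})$ gives $\text{ind}_{\infty}(\mathbf{s})\le\lceil\frac{n+2}{2}\rceil-1$, whereas strict positivity of $\mathbf{s}$ forces $\text{ind}_{\infty}(\mathbf{s})=\lceil\frac{n+1}{2}\rceil$ (again Theorem~\ref{ThmStrictlyPositiveSequencesInfiniteInterval}); the inequality $\lceil\frac{n+1}{2}\rceil\le\lceil\frac{n+2}{2}\rceil-1$ holds precisely when $n$ is odd. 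In particular this also shows that for even $n$ there is no singularly positive backward extension at all.

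For (ii)$\Rightarrow$(i), write $n=2m-1$ and let $\mu_0\in\mathcal{M}_{\infty}(\mathbf{s})$ be the unique measure with $\#\text{supp}\,\mu_0=m$ supplied by Corollary~\ref{CorUniqueMinimalMeasureStrictlyPositiveInfiniteInterval}; its atoms are the roots of $Q$. The main obstacle, as I see it, is the identity $t_{\infty}(\mathbf{s})=\int_{(0,\infty)}\frac1t\ddd\mu_0(t)$. I would prove it by showing $\int\frac1t\ddd\mu\ge\int\frac1t\ddd\mu_0$ for every $\mu\in\mathcal{M}_{\infty}(\mathbf{s})$: choose $0<a<b$ so large that $\text{supp}\,\mu\cup\text{supp}\,\mu_0\subset(a,b)$ and $\mathbf{s}$ is strictly positive on $[a,b]$; then $\mu_0\in\mathcal{M}_{a,b}(\mathbf{s})$ has all atoms interior to $[a,b]$ and $\#\text{supp}\,\mu_0=m=\text{ind}_{a,b}(\mathbf{s})$ by Corollary~\ref{CorStrictlyPositiveSequncePrincipalMeasuresCompactInterval}, so $\mu_0$ is the lower principal measure of $\mathbf{s}$ on $[a,b]$; applying Theorem~\ref{ThmExtremalValuesOfIntegralCompactInterval} to $\Omega(t)=(-1)^{n+1}\frac1t=\frac1t$ (whose Chebyshev-system property is the determinant computation recalled just before \eqref{FormBackwardExtensionCompactInterval}) yields $\int_{[a,b]}\frac1t\ddd\mu\ge\int_{[a,b]}\frac1t\ddd\mu_0$; taking the infimum over $\mu\in\mathcal{M}_{\infty}(\mathbf{s})$ and using $\mu_0\in\mathcal{M}_{\infty}(\mathbf{s})$ gives the identity. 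Granting it, put $\nu:=\frac1t\ddd\mu_0$, a finitely supported positive measure on a compact subinterval of $(0,\infty)$. A short moment computation gives $\int t^k\ddd\nu=\int t^{k-1}\ddd\mu_0=s_{k-1}$ for $k\ge1$ and $\int\ddd\nu=\int\frac1t\ddd\mu_0=t_{\infty}(\mathbf{s})=s_{-1}$, so $\nu\in\mathcal{M}_{\infty}(\mathbf{s}')$ and hence $\mathbf{s}'$ is positive on $(0,\infty)$ by Theorem~\ref{ThmEquivalentConditionForMomentSequenceInfiniteInterval}, i.e.\ a backward extension of $\mathbf{s}$. Since $\#\text{supp}\,\nu=m<m+1=\lceil\frac{n+2}{2}\rceil$, Theorem~\ref{ThmStrictlyPositiveSequencesInfiniteInterval} shows $\mathbf{s}'$ is not strictly positive on $(0,\infty)$, so, being positive, it is singularly positive, which is (i).

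For the final assertion: once (i) holds, so does (ii), hence $\mathbf{s}'$ is singularly positive and therefore determinate on $(0,\infty)$ by Theorem~\ref{ThmStrictlyPositiveSequencesInfiniteInterval}; thus $\mathcal{M}_{\infty}(\mathbf{s}')=\{\nu\}$ for the measure $\nu$ built above, whose atoms are exactly the atoms of $\mu_0$, namely the roots of $Q$. The routine loose ends would be the two moment identities, the checks that $t\ddd\nu$ and $\frac1t\ddd\mu_0$ are again measures of the required type, and the arithmetic of ceilings; the substantive input is the extremal-value identity for $t_{\infty}(\mathbf{s})$, where the compact-interval theory of Section~\ref{SecMomentsOnCompactInterval} does the work.
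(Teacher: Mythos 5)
Your proof is correct and follows essentially the same route as the paper: both directions rest on Lemma \ref{LemWhereBackwardExtensionBelongsInfiniteInterval}, Theorem \ref{ThmStrictlyPositiveBackwardExtensionsInfiniteInterval}, the index characterization of strict positivity, and the fact that for odd $n$ the lower principal measure (hence $t_{a,b}(\mathbf{s})$) does not depend on the interval $[a,b]$. The only divergence is in forcing $n$ to be odd, where you use a clean support-cardinality comparison ($\lceil\frac{n+1}{2}\rceil\le\lceil\frac{n+2}{2}\rceil-1$) while the paper derives the contradiction $s_{-1}>t_{a,b}(\mathbf{s})$ from Theorem \ref{ThmExtremalValuesOfIntegralCompactInterval}; both are valid and use the same underlying machinery.
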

\begin{proof}
    By Lemma \ref{LemWhereBackwardExtensionBelongsInfiniteInterval} and Theorem \ref{ThmStrictlyPositiveBackwardExtensionsInfiniteInterval} the sequence $ \mathbf{s}' $ is singularly positive if
    and only if $ s_{-1} = t_{\infty}(\mathbf{s}) $. It is enough to prove that this
    can happen only if $ n $ is odd. From Theorem \ref{ThmStrictlyPositiveSequencesInfiniteInterval} we have $
    \ind_{\infty}(\mathbf{s}) = \left\lceil \frac{n+1}{2}\right \rceil $ and 
    \begin{equation*}
        \ind_{\infty}(\mathbf{s})\le \ind_{\infty}(\mathbf{s}') <
        \left\lceil \frac{n+2}{2}\right \rceil.
    \end{equation*} Thus, $
    \ind_{\infty}(\mathbf{s}') = \left\lceil \frac{n+1}{2}\right \rceil $.
    Moreover, $ \mathbf{s}' $ is determinate on $ (0,\infty) $. Let $
    \mu $ be the unique element of $ \mathcal{M}_{\infty}(\mathbf{s}') $; we can assume $ \supp\,\mu\subset (a,b) $ for some interval $ [a,b]\subset (0,\infty) $. Let $ \nu = t\ddd\mu(t) $. Then $
    \nu\in\mathcal{M}_{[a,b]}(\mathbf{s}) $ and 
    \begin{equation*}
        \ind_{[a,b]}(\nu) =
        \ind_{[a,b]}(\mu) = \ind_{\infty}(\mu) = \left\lceil
        \frac{n+1}{2}\right \rceil
    \end{equation*} Suppose to the contrary that $ n $ is even. Then $
    \ind_{[a,b]}(\nu) = \frac{n+2}{2} > \frac{n+1}{2} $, so, by \cite[Theorem IV.1.1]{KN1} we have
    \begin{equation*}
        s_{-1} = \int_{[a,b]}\frac{1}{t}\ddd\nu(t) > t_{a,b}(\mathbf{s}) \ge t_{\infty}(\mathbf{s}),
    \end{equation*} which gives us a contradiction. Hence, $ n $ is odd and, by Corollary \ref{CorUniqueMinimalMeasureStrictlyPositiveInfiniteInterval}, atoms of $ \nu $ (which are the same as atoms of $ \mu $) are roots of \eqref{FormPolynomialOfMinimalMeasureOfSingularlyPositiveBackwardExtensionInfiniteInterval}. Conversely, if $ n $ is odd, then $ t_{\infty}(\mathbf{s}) =
    t_{a,b}(\mathbf{s}) $ for every $ [a,b]\subset (0,\infty) $ such that $
    \mathbf{s} $ is strictly positive on $ [a,b] $, because atoms of the lower
    principal measure of $ \mathbf{s} $ do not depend on $ a $ and $ b $ (see \cite[Section III.\S 5.3]{KN1}). 
\end{proof}
In the case of interval $ (0,1] $ singularly positive backward extensions can appear in any case; we omit the proof, which is similar to the proof of Corollary \ref{CorSingularlyPositiveBackwardExtensionsInfiniteInterval}.
\begin{corollary}
    \label{CorSingularlyPositiveBackwardExtensionsHalfOpenInterval}
    Let $ n\in\mathbb{N} $. Let $ \mathbf{s} = (s_0,\ldots,s_n)\subset [0,\infty) $ be strictly positive on $ (0,1] $ and $ s_{-1}\in [0,\infty) $. Then the following are equivalent:
    \begin{enumerate}
        \item $ \mathbf{s}' = (s_{-1},\ldots,s_n) $ is a singularly positive backward extension of $ \mathbf{s} $,
        \item $ s_{-1} = t_{1}(\mathbf{s}) $,
    \end{enumerate}
    Moreover, if (i) holds, then the atoms of unique measure $ \mu\in\mathcal{M}_{1}(\mathbf{s}') $ are roots of the polynomial $ Q $, where 
    \begin{enumerate}
        \item       \begin{equation}
        \label{FormPolynomialOfSingularlyPositiveBackwardExtensionOddHalfOpenInterval}
        Q(t) = \det\begin{bmatrix}
        s_0 & s_1 & \ldots & s_{m-1} & 1\\
        s_1 & s_2 & \ldots & s_{m} & t\\
        \vdots & \vdots & \vdots & \vdots & \vdots\\
        s_{m} & s_{m+1} & \ldots & s_{2m-1} & t^{m}
        \end{bmatrix},
        \end{equation}
        if $ n = 2m-1 $, $ m\in\mathbb{N}_{1} $,
        \item \begin{equation}
        \label{FormPolynomialOfSingularlyPositiveBackwardExtensionEvenHalfOpenInterval}
        Q(t) = (1-t)\det\begin{bmatrix}
        s'_{0} & s'_{1} & \ldots & s'_{m-1} & 1\\
        s'_{1} & s'_{2} & \ldots & s'_{m} & t\\
        \vdots & \vdots & \vdots & \vdots & \vdots\\
        s'_{m} & s'_{m+1} & \ldots & s'_{2m-1} &
        t^{m}
        \end{bmatrix},
        \end{equation}
        if $ n=2m $, $ m\in\mathbb{N} $, where $ s'_{k} = s_{k}-s_{k+1} $, $ k\in\mathbb{N}\cap[0,2m-1] $.
    \end{enumerate}
\end{corollary}
Next result gives the characterization of moments of a positive sequence on $ (0,\infty) $ in terms of numbers $ t_{\infty}(\,\cdot\,) $. This will be crucial in solving completion problems.
\begin{theorem}
    \label{ThmConditionsOnMomentsOfMinimalMeasureInfiniteInterval}
    Let $ n\in \mathbb{N} $ and let $ \mathbf{s} = (s_0,\ldots,s_n) \subset [0,\infty) $ be positive on $ (0,\infty) $. Set $ K=\ind_{\infty}(\mathbf{s}) $ and $ N = 2K-1 $. Then
    \begin{enumerate}
        \item $ s_{k}\in
        (t_{\infty}((s_{k+1},\ldots,s_n)),\infty) $ for $ k\in\mathbb{N}\cap[n-N,n-1] $,
        \item $ s_{k} = t_{\infty}((s_{k+1},
        \ldots,s_{k+N+1})) $ for $ k\in\mathbb{N}\cap [0,n-N-1] $.
    \end{enumerate} 
\end{theorem}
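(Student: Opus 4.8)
The plan is to deduce the theorem from the backward‑extension results of this section by working throughout with a single fixed minimal representing measure of $\mathbf{s}$. We may assume $K\ge 1$ (if $\mathbf{s}$ is the zero sequence there is nothing to prove); fix $\mu=\sum_{i=1}^{K}\varrho_i\delta_{\xi_i}\in\mathcal{M}_{\infty}(\mathbf{s})$ with $\#\text{supp}\,\mu=K$, so that $0<\xi_1<\ldots<\xi_K$, $\varrho_i>0$, and in particular $s_j=\int t^{j}\ddd\mu>0$ for every $j\in\mathbb{N}\cap[0,n]$. The basic remark is that for any $j$ and any $\ell$ with $j+\ell-1\le n$ the measure $t^{j}\ddd\mu=\sum_i\xi_i^{j}\varrho_i\delta_{\xi_i}$ still has exactly $K$ atoms, all in $(0,\infty)$, and represents the truncation $(s_j,\ldots,s_{j+\ell-1})$.

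The heart of the argument is the following claim: \emph{for every $j\in\mathbb{N}$ and every $\ell\in\mathbb{N}_{1}$ with $\ell\le N+1$ and $j+\ell-1\le n$, the truncation $(s_j,\ldots,s_{j+\ell-1})$ is strictly positive on $(0,\infty)$ and $\text{ind}_{\infty}((s_j,\ldots,s_{j+\ell-1}))=\lceil\ell/2\rceil$.} To prove it, note that the measure $t^{j}\ddd\mu$ above, together with Lemma \ref{LemIndexEstimationInfiniteInterval} (whose threshold for a sequence of length $\ell$, i.e. with top index $\ell-1$, is $\lceil\ell/2\rceil$), gives $\text{ind}_{\infty}((s_j,\ldots,s_{j+\ell-1}))\le\lceil\ell/2\rceil\le K$, the last inequality because $\ell\le N+1=2K$. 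If the index were strictly smaller than $\lceil\ell/2\rceil$, then by Theorem \ref{ThmStrictlyPositiveSequencesInfiniteInterval} the truncation would be determinate on $(0,\infty)$; but $t^{j}\ddd\mu$ lies in $\mathcal{M}_{\infty}((s_j,\ldots,s_{j+\ell-1}))$, so it would be the unique representing measure and the index would then equal $K$, contradicting $K\ge\lceil\ell/2\rceil>\text{ind}_{\infty}$. Hence the index equals $\lceil\ell/2\rceil$, and strict positivity follows again from Theorem \ref{ThmStrictlyPositiveSequencesInfiniteInterval}.

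Granting the claim, (i) follows at once: for $k\in\mathbb{N}\cap[n-N,n-1]$, applying the claim with $(j,\ell)=(k+1,n-k)$ and then with $(j,\ell)=(k,n-k+1)$ — both covered by the claim, since $k\ge n-N$ gives $\ell\le N+1$ while $j+\ell-1=n$ — shows that $(s_{k+1},\ldots,s_n)$ and $(s_k,s_{k+1},\ldots,s_n)$ are strictly positive on $(0,\infty)$; thus $(s_k,\ldots,s_n)$ is a strictly positive backward extension of $(s_{k+1},\ldots,s_n)$, and Theorem \ref{ThmStrictlyPositiveBackwardExtensionsInfiniteInterval} (with $s_{-1}:=s_k$) gives $s_k\in(t_{\infty}((s_{k+1},\ldots,s_n)),\infty)$. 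For (ii), fix $k\in\mathbb{N}\cap[0,n-N-1]$ and put $\mathbf{w}=(s_{k+1},\ldots,s_{k+N+1})$; it has length $N+1=2K$ and top index $N=2K-1$, which is odd. By the claim (with $j=k+1$, $\ell=N+1$; here $j+\ell-1=k+N+1\le n$) the sequence $\mathbf{w}$ is strictly positive with $\text{ind}_{\infty}(\mathbf{w})=K$, so $t_{\infty}(\mathbf{w})$ is defined. On the other hand $(s_k,\mathbf{w})=(s_k,\ldots,s_{k+N+1})$ has length $2K+1$ and is represented by $t^{k}\ddd\mu$ (here $k+N+1\le n$), whence $\text{ind}_{\infty}((s_k,\mathbf{w}))\le K<K+1=\lceil(2K+1)/2\rceil$, so by Theorem \ref{ThmStrictlyPositiveSequencesInfiniteInterval} it is not strictly positive; i.e. $(s_k,\mathbf{w})$ is a singularly positive backward extension of $\mathbf{w}$. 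Corollary \ref{CorSingularlyPositiveBackwardExtensionsInfiniteInterval}, applied to $\mathbf{w}$ with $s_{-1}:=s_k$, then forces $s_k=t_{\infty}(\mathbf{w})=t_{\infty}((s_{k+1},\ldots,s_{k+N+1}))$ — this is the step where oddness of $N$ is used.

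The one point that requires care is the claim, specifically the determinacy dichotomy: one must note that multiplying $\mu$ by the strictly positive function $t^{j}$ (using $\text{supp}\,\mu\subset(0,\infty)$) neither changes the number of atoms nor spoils admissibility in $\mathcal{M}_{\infty}(\cdot)$, and one must track lengths and parities precisely so as to read the right threshold $\lceil\ell/2\rceil$ off Lemma \ref{LemIndexEstimationInfiniteInterval} and Theorem \ref{ThmStrictlyPositiveSequencesInfiniteInterval}. Everything else is a direct application of Theorem \ref{ThmStrictlyPositiveBackwardExtensionsInfiniteInterval} and Corollary \ref{CorSingularlyPositiveBackwardExtensionsInfiniteInterval}.
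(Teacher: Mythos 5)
Your proof is correct and follows essentially the same route as the paper: both rest on the observation that a $K$-atomic representing measure of a truncation forces (via the determinacy dichotomy of Theorem \ref{ThmStrictlyPositiveSequencesInfiniteInterval}) the truncation's index to equal $\lceil \ell/2\rceil$, and part (ii) is virtually identical to the paper's argument. The only difference is organizational: in (i) the paper gets $s_k\ge t_{\infty}$ from Lemma \ref{LemWhereBackwardExtensionBelongsInfiniteInterval} and excludes equality through Corollary \ref{CorSingularlyPositiveBackwardExtensionsInfiniteInterval} (a parity case plus an atom count), whereas you prove strict positivity of both $(s_{k+1},\ldots,s_n)$ and $(s_k,\ldots,s_n)$ via your uniform claim and then invoke Theorem \ref{ThmStrictlyPositiveBackwardExtensionsInfiniteInterval} directly -- both are valid and use the same underlying machinery.
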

\begin{proof}
    By Lemma \ref{LemIndexEstimationInfiniteInterval}, $ K\le \left\lceil \frac{n+1}{2}\right \rceil $. Let $ \mu\in\mathcal{M}_{\infty}(\mathbf{s}) $ be such that $ \ind_{\infty}(\mu) = K  $.\\
    (i). If $ k = n-N $, then $ n-k-1 $ is even and, by Lemma \ref{LemWhereBackwardExtensionBelongsInfiniteInterval} and Corollary \ref{CorSingularlyPositiveBackwardExtensionsInfiniteInterval}, $ s_{n-N} \in (t_{\infty}((s_{n-N+1},\ldots, s_n)),\infty) $. Assume $ k>n-N $. By Lemma \ref{LemWhereBackwardExtensionBelongsInfiniteInterval}, we have $ s_k\in
    [t_{\infty}((s_{k+1},\ldots,s_n)),\infty) $. Therefore, it is enough to prove that $
    s_k > t_{\infty}((s_{k+1},\ldots,s_n)) $. Suppose to the contrary that it is not
    the case. Then from Corollary \ref{CorSingularlyPositiveBackwardExtensionsInfiniteInterval} it follows that $ n-k-1 $ is
    odd and $ (s_k,\ldots,s_n) $ is singularly positive on $ (0,\infty) $ and, consequently, by Theorem \ref{ThmStrictlyPositiveSequencesInfiniteInterval},
    determinate on $ (0,\infty) $. Let $ \nu = t^{k}\ddd\mu(t) $. Then $ \ind_{\infty}(\mu)= \ind_{\infty}(\nu) < \frac{n-k}{2}$, because $ \nu $
    has to be the unique element of $ \mathcal{M}_{\infty}((s_k,\ldots,s_n)) $. However, we
    have $ \frac{n-k}{2} < \frac{N}{2} = K-\frac{1}{2} = K-1+\frac{1}{2} $, so $
    \ind_{\infty}(\mu) \le K-1 $, which is a contradiction.\\
    (ii). Let $ \nu = t^{k+1}\ddd\mu(t) $. Then $ \nu\in\mathcal{M}_{\infty}((s_{k+1},\ldots, s_{k+N+1})) $ and $
    \ind_{\infty}(\nu) = \ind_{\infty}(\mu) = K $. Applying Theorem
    \ref{ThmStrictlyPositiveSequencesInfiniteInterval} to $ \nu $, we have that $
    (s_{k+1},\ldots, s_{k+N+1}) $ is strictly positive on $ (0,\infty) $. Hence, 
    \begin{equation*}
        \ind_{\infty}((s_{k+1},\ldots, s_{k+N+1})) = \left\lceil
        \frac{N+1}{2}\right\rceil = K.
    \end{equation*} Then 
    \begin{equation*}
        \ind_{\infty}((s_{k},\ldots,s_{k+N+1})) \le K = \frac{N+1}{2} <
        \frac{N+2}{2} < \left\lceil \frac{N+2}{2}\right\rceil,
    \end{equation*} because $ t^{k}\ddd\mu(t) \in\mathcal{M}_{\infty}((s_{k},\ldots,s_{k+N+1})) $ has support of cardinality $ K $. By
    Theorem \ref{ThmStrictlyPositiveSequencesInfiniteInterval} the sequence $ (s_{k},\ldots,s_{k+N+1}) $ is
    singularly positive on $ (0,\infty) $. Hence, by Corollary \ref{CorSingularlyPositiveBackwardExtensionsInfiniteInterval}, $ s_{k} =
    t_{\infty}((s_{k+1},\ldots,s_{k+N+1})) $.
\end{proof}
The counterpart of the above theorem for moment sequence on $ (0,1] $ is presented below (we omit the proof, which is a straightforward modification of the proof of Theorem \ref{ThmConditionsOnMomentsOfMinimalMeasureInfiniteInterval}).
\begin{theorem}
    \label{ThmConditionsOnMomentsOfMinimalMeasureHalfOpenInterval}
    Let $ n\in\mathbb{N} $ and let $ \mathbf{s} = (s_0,\ldots,s_n)\subset [0,\infty) $ be positive on $ (0,1] $. Set $ K= \ind_{1}(\mathbf{s}) \le \frac{n+1}{2} $ and $ N=2K-1 $. Then
    \begin{enumerate}
        \item $ s_{k}\in t_{1}((s_{k+1},\ldots,s_{n})) $ for $ k\in\mathbb{N}\cap [n-N,n-1] $,
        \item $ s_{k} = t_{1}((s_{k+1},\ldots,s_{k+N+1})) $ for $ k\in\mathbb{N}\cap[0,n-N-1] $.
    \end{enumerate}
\end{theorem}
The following theorem can be regarded as the converse of the previous theorem: if we have numbers satisfying conditions as in Theorem \ref{ThmConditionsOnMomentsOfMinimalMeasureInfiniteInterval}, then they form a positive sequence on $ (0,\infty) $ with prescribed index. We state this result in a little bit more general setting.
\begin{theorem}
    \label{ThmExistenceOfBackwardExtensionWithPrescribedNumberOfAtomsInfiniteInterval}
    Let $ n\in \mathbb{N} $ and let $ \mathbf{s} = (s_0,\ldots,s_n) \subset [0,\infty) $ be strictly
    positive on $ (0,\infty) $. Suppose $ r\in \mathbb{N}_{1} $ and $ K\in\mathbb{N}\cap \left[\left\lceil
    \frac{n+1}{2}\right\rceil, \left\lceil \frac{n+r+1}{2}\right\rceil\right] $.
    Set $ N = 2K-1 $ and let $ s_{-r},\ldots,s_{-1} \in [0,\infty) $ be such that
    \begin{enumerate}
        \item $ s_{k}\in
        (t_{\infty}((s_{k+1},\ldots,s_0,s_1,\ldots,s_n)),\infty) $ for $ k\in \mathbb{Z}\cap[n-N,-1] $,
        \item $ s_{k}= t_{\infty}((s_{k+1},\ldots,s_{k+N+1})) $ for $ k \in\mathbb{Z}\cap [-r,n-N-1] $.
    \end{enumerate}
    Then $ \mathbf{s}' = (s_{-r},s_{-r+1},\ldots,s_n) $ is a backward extension of $
    \mathbf{s} $ on $ (0,\infty) $ satisfying $ \ind_{\infty}(\mathbf{s}') = K $. Moreover, if $ N\le n+r $, then the atoms of the unique measure $ \mu\in\mathcal{M}_{\infty}(\mathbf{s}\prime) $ satisfying $ \ind_{\infty}(\mu) = K $ are roots of the polynomial $ Q $, where
    \begin{equation}
    \label{FormPolynomialOfMinimalMeasureOfSequenceWithPrescribedNumberOfAtomsInfiniteInterval}
    Q(t) = \det\begin{bmatrix}
    s_{-r} & s_{-r+1} & \ldots & s_{-r+K-1} & 1\\
    s_{-r+1} & s_{-r+2} & \ldots & s_{-r+K} & t\\
    \vdots & \vdots & \vdots & \vdots & \vdots\\
    s_{-r+K} & s_{-r+K+1} & \ldots & s_{-r+2K-1} & t^{K}
    \end{bmatrix}.
    \end{equation}
\end{theorem}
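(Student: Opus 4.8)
The plan is to adjoin the new moments one at a time, in the order $ s_{-1},s_{-2},\ldots,s_{-r} $, carrying an invariant along the way. For $ 0\le j\le r $ write $ \mathbf{s}^{(j)}=(s_{-j},\ldots,s_n) $, so that $ \mathbf{s}^{(0)}=\mathbf{s} $, $ \mathbf{s}^{(r)}=\mathbf{s}' $ and $ \deg\mathbf{s}^{(j)}=n+j $; the hypothesis on $ K $ gives $ n\le N\le n+r+1 $. For $ N-n\le j\le r $ put $ W_j:=(s_{-j},\ldots,s_{-j+N}) $, a sequence of odd degree $ N=2K-1 $; note $ W_{N-n}=\mathbf{s}^{(N-n)} $, and that no such $ W_j $ exists exactly when $ N=n+r+1 $. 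The invariant to be propagated is: $ \mathbf{s}^{(j)} $ is positive on $ (0,\infty) $ with $ \ind_\infty(\mathbf{s}^{(j)})=\left\lceil\frac{n+j+1}{2}\right\rceil $ if $ j\le N-n $ and $ \ind_\infty(\mathbf{s}^{(j)})=K $ if $ j\ge N-n $, and moreover $ W_j $ is strictly positive on $ (0,\infty) $ (hence $ \ind_\infty(W_j)=K $) whenever it is defined.

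First the entries lying in range (i). For $ 1\le j\le\min(N-n,r) $ the index $ k=-j $ belongs to $ [n-N,-1] $, and condition (i) reads $ s_{-j}\in(t_\infty(\mathbf{s}^{(j-1)}),\infty) $. Since $ \mathbf{s}^{(0)}=\mathbf{s} $ is strictly positive on $ (0,\infty) $, Theorem \ref{ThmStrictlyPositiveBackwardExtensionsInfiniteInterval} applied inductively shows that each such $ \mathbf{s}^{(j)} $ is strictly positive on $ (0,\infty) $, and Theorem \ref{ThmStrictlyPositiveSequencesInfiniteInterval} gives $ \ind_\infty(\mathbf{s}^{(j)})=\left\lceil\frac{n+j+1}{2}\right\rceil $. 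If $ N\ge n+r $ this already exhausts the new entries, $ \mathbf{s}'=\mathbf{s}^{(r)} $ is strictly positive of degree $ n+r $, and $ \ind_\infty(\mathbf{s}')=\left\lceil\frac{n+r+1}{2}\right\rceil=K $ (the last equality because $ 2K-1=N\ge n+r $); this proves the first assertion in this case. Otherwise $ N-n<r $ and the process ends at $ \mathbf{s}^{(N-n)}=W_{N-n} $, which is strictly positive of odd degree $ N $ with index $ K $ — the base of the window part of the invariant.

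Now the entries lying in range (ii), i.e.\ $ N-n+1\le j\le r $, treated by induction on $ j $. Condition (ii) says $ s_{-j}=t_\infty(W_{j-1}) $. By the invariant $ \mathbf{s}^{(j-1)} $ has degree $ \ge N $ and index $ K $, so by Theorem \ref{ThmStrictlyPositiveSequencesInfiniteInterval} it is either singular (hence determinate) or strictly positive of odd degree $ N $; in either case there is a unique measure $ \mu_{j-1}\in\mathcal{M}_\infty(\mathbf{s}^{(j-1)}) $ with $ \#\text{supp}\,\mu_{j-1}=K $ (by determinateness, respectively by Corollary \ref{CorUniqueMinimalMeasureStrictlyPositiveInfiniteInterval}). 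Regarded as an element of $ \mathcal{M}_\infty(W_{j-1}) $, this $ \mu_{j-1} $ is a $ K $-atom — hence minimal — representing measure of the strictly positive odd-degree sequence $ W_{j-1} $, so by Corollary \ref{CorUniqueMinimalMeasureStrictlyPositiveInfiniteInterval} it is the lower principal measure of $ W_{j-1} $, whence $ \int\frac{1}{t}\ddd\mu_{j-1}(t)=t_\infty(W_{j-1})=s_{-j} $ (Corollary \ref{CorSingularlyPositiveBackwardExtensionsInfiniteInterval}). Therefore $ \nu:=\frac{1}{t}\ddd\mu_{j-1} $ lies in $ \mathcal{M}_\infty(\mathbf{s}^{(j)}) $, so $ \mathbf{s}^{(j)} $ is positive on $ (0,\infty) $; moreover $ \ind_\infty(\mathbf{s}^{(j)})\le K $ (witnessed by $ \nu $) and $ \ind_\infty(\mathbf{s}^{(j)})\ge K $ (because $ t\ddd\rho\in\mathcal{M}_\infty(\mathbf{s}^{(j-1)}) $ for every $ \rho\in\mathcal{M}_\infty(\mathbf{s}^{(j)}) $), so $ \ind_\infty(\mathbf{s}^{(j)})=K $; and since $ \deg\mathbf{s}^{(j)}=n+j>N $, Theorem \ref{ThmStrictlyPositiveSequencesInfiniteInterval} forces $ \mathbf{s}^{(j)} $ to be singular, hence determinate with unique representing measure $ \nu $. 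Finally, $ \nu $ regarded in $ \mathcal{M}_\infty(W_j) $ has $ K $ atoms; if $ \ind_\infty(W_j) $ were $ <K $ then $ W_j $ would be determinate with a representing measure of fewer than $ K $ atoms, which is absurd, so $ \ind_\infty(W_j)=K=\left\lceil\frac{N+1}{2}\right\rceil $ and $ W_j $ is strictly positive by Theorem \ref{ThmStrictlyPositiveSequencesInfiniteInterval}. This closes the induction, and in particular $ \mathbf{s}'=\mathbf{s}^{(r)} $ is a backward extension of $ \mathbf{s} $ on $ (0,\infty) $ with $ \ind_\infty(\mathbf{s}')=K $.

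It remains to identify the minimal measure when $ N\le n+r $, so that $ W_r $ is defined. If $ r=N-n $, then $ \mathbf{s}'=W_{N-n} $ is strictly positive of odd degree $ N=2K-1 $, and by Corollary \ref{CorUniqueMinimalMeasureStrictlyPositiveInfiniteInterval} the atoms of its unique $ K $-atom measure are the roots of the determinant built from $ (s_{-r},\ldots,s_{-r+2K-1}) $, i.e.\ of \eqref{FormPolynomialOfMinimalMeasureOfSequenceWithPrescribedNumberOfAtomsInfiniteInterval}. If $ r>N-n $, let $ \mu $ be the unique $ K $-atom measure in $ \mathcal{M}_\infty(\mathbf{s}') $; regarded in $ \mathcal{M}_\infty(W_r) $ it is a $ K $-atom, hence minimal, representing measure of the strictly positive odd-degree sequence $ W_r=(s_{-r},\ldots,s_{-r+N}) $, and Corollary \ref{CorUniqueMinimalMeasureStrictlyPositiveInfiniteInterval} again gives its atoms as the roots of \eqref{FormPolynomialOfMinimalMeasureOfSequenceWithPrescribedNumberOfAtomsInfiniteInterval}. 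The main difficulty is the bookkeeping across the three regimes $ j<N-n $, $ j=N-n $, $ j>N-n $ and the boundary cases $ N\in\{n,\,n+r,\,n+r+1\} $; the one genuinely delicate point is that $ s_{-j} $ is prescribed as $ t_\infty $ of the finite window $ W_{j-1} $ and not of the whole current sequence $ \mathbf{s}^{(j-1)} $, which is reconciled by the fact that $ \mu_{j-1} $, viewed on $ W_{j-1} $, is its lower principal measure.
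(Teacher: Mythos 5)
Your proof is correct and follows essentially the same route as the paper: induction through range (i) via Theorem \ref{ThmStrictlyPositiveBackwardExtensionsInfiniteInterval}, then the observation that the unique $K$-atomic measure restricted to each length-$(N+1)$ window is the lower principal measure of a strictly positive odd-degree sequence, so that the prescribed $t_{\infty}$-values in range (ii) coincide with its reciprocal moments (Corollaries \ref{CorUniqueMinimalMeasureStrictlyPositiveInfiniteInterval} and \ref{CorSingularlyPositiveBackwardExtensionsInfiniteInterval}). The only difference is organizational: the paper constructs the measure once at the first singular step and verifies all remaining prescribed moments against it, whereas you propagate a window invariant one step at a time, which makes the bookkeeping more explicit but adds nothing essentially new.
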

\begin{proof}
    We show that for any $
    k\in\mathbb{Z}\cap [n-N,0] $ the sequence $ (s_{k}, s_{k+1},\ldots, s_n) $ is strictly positive on $ (0,\infty) $. The proof goes by induction. The case $ k = 0 $ follows
    from the assumption. If $ n-N\le k-1 < 0 $, then by inductive hypothesis $
    \mathbf{s}_k = (s_k,\ldots,s_n) $ is strictly positive on $ (0,\infty) $. Applying
    Theorem \ref{ThmStrictlyPositiveBackwardExtensionsInfiniteInterval} to $ \mathbf{s}_k $ and $ s_{k-1} $, we get
    that $ (s_{k-1},s_{k},\ldots,s_{n}) $ is strictly positive on $ (0,\infty) $. Next, from
    Corollary \ref{CorSingularlyPositiveBackwardExtensionsInfiniteInterval} applied to $ \mathbf{s}_1 = (s_{n-N},\ldots, s_{n}) $ and $
    s_{n-N-1} $, we derive that $ \mathbf{s}_1'=(s_{n-N-1},\ldots,s_n) $ is singularly
    positive on $ (0,\infty) $. Let $
    \mu $ be the unique element of $ \mathcal{M}_{\infty}(\mathbf{s}_1') $. Setting
    \begin{equation*}
        s_{k}' =
        \int_{(0,\infty)} t^{k+N-n+1}\ddd\mu(t), \qquad k\in\mathbb{Z}\cap [-r,n-N-2],
    \end{equation*} we obtain
    from Theorem \ref{ThmStrictlyPositiveSequencesInfiniteInterval} that $
    (s_{-r}',\ldots,s_{n-N-2}',s_{n-N-1},\ldots,s_n) $ is singularly positive on $ (0,\infty) $. The sequence $
    \mathbf{s}_1 $ is strictly positive on $ (0,\infty) $, so $ \ind_{\infty}(\mu) = \left\lceil
    \frac{N+1}{2}\right\rceil = K $. Therefore, using Corollary \ref{CorSingularlyPositiveBackwardExtensionsInfiniteInterval} we get
    \begin{equation*}
        s_{k}' = t_{\infty}((s_{k+1},\ldots,s_{k+N+1}) = s_{k}, \qquad k \in\mathbb{Z}\cap [-r,n-N-1].
    \end{equation*}
    This shows that $ \mathbf{s}' $ is a backward
    extension of $ \mathbf{s} $ on $ (0,\infty) $ of index $ K $. To prove the ''moreover'' part, consider $ \mathbf{s}'' = (s_{-r},\ldots,s_{-r+2K-1}) $. Then $ \ind_{\infty}(\mathbf{s}'') = \ind_{\infty}(\mathbf{s}') = K $. Hence, by Theorem \ref{ThmStrictlyPositiveSequencesInfiniteInterval}, $ \mathbf{s}'' $ is strictly positive and, by Corollary \ref{CorUniqueMinimalMeasureStrictlyPositiveInfiniteInterval}, atoms of the unique measure $ \mu\in\mathcal{M}_{\infty}(\mathbf{s}) $ satisfying $ \ind_{\infty}(\mu) = K $ are roots of \eqref{FormPolynomialOfMinimalMeasureOfSequenceWithPrescribedNumberOfAtomsInfiniteInterval}.
\end{proof}
Again, below we state (without the proof) the counterpart of the above theorem for moments on $ (0,1] $.
\begin{theorem}
    \label{ThmExistenceOfBackwardExtensionWithPrescribedNumberOfAtomsHalfOpenInterval}
    Let $ n\in\mathbb{N} $ and let $ \mathbf{s} = (s_0,s_1,\ldots,s_n)\subset [0,\infty) $ be strictly positive on $ (0,1] $. Suppose $ r\in \mathbb{N}_{1} $ and $ K\in \mathbb{Q} $ is such that $ \frac{n+1}{2} \le K\le \frac{n+r+1}{2} $ and $ 2K\in\mathbb{N} $. Set $ N = 2K-1 $ an let $ s_{-r},\ldots,s_{-1}\in [0,\infty) $ be such that
    \begin{enumerate}
        \item $ s_{k}\in (t_{1}((s_{k+1},\ldots,s_n)),\infty) $ for $ k\in\mathbb{Z}\cap[n-N,n-1] $,
        \item $ s_{k} = t_{1}((s_{k+1},\ldots,s_{k+N+1})) $ for $ k\in\mathbb{Z}\cap[-r,n-N-1] $.
    \end{enumerate}
    Then $ \mathbf{s}' = (s_{-r},\ldots,s_n) $ is a backward extension of $ \mathbf{s} $ on $ (0,1] $ and $ \ind_{1}(\mathbf{s}') = K $. Moreover, atoms of the unique measure $ \mu\in\mathcal{M}_{1}(\mathbf{s}') $ satisfying $ \ind_{1}(\mathbf{s}') = K $ are roots of the polynomial
    \begin{enumerate}
        \item       \begin{equation*}
            Q(t) = \det\begin{bmatrix}
            s_{-r} & s_{-r+1} & \ldots & s_{-r+K-1} & 1\\
            s_{-r+1} & s_{-r+2} & \ldots & s_{-r+K} & t\\
            \vdots & \vdots & \vdots & \vdots & \vdots\\
            s_{-r+K} & s_{-r+K+1} & \ldots & s_{-r+2K-1} & t^{m}
            \end{bmatrix},
            \end{equation*}
            if $ N = 2m-1 $, $ m\in\mathbb{N}_{1} $;
            \item \begin{equation*}
            Q(t) = (1-t)\det\begin{bmatrix}
            s'_{-r} & s'_{-r+1} & \ldots & s'_{-r+K-\frac{3}{2}} & 1\\
            s'_{-r+1} & s'_{-r+2} & \ldots & s'_{-r+K-\frac{1}{2}} & t\\
            \vdots & \vdots & \vdots & \vdots & \vdots\\
            s'_{-r+K-\frac{1}{2}} & s'_{-r+K+\frac{1}{2}} & \ldots & s'_{-r+2K-2} &
            t^{m}
            \end{bmatrix},
            \end{equation*}
            if $ N=2m $, $ m\in\mathbb{N} $.
    \end{enumerate}
\end{theorem}
At the end of the previous section we claimed that if $ n\in \mathbb{N} $ is even and $ \mathbf{s} = (s_0,\ldots,s_n)\subset [0,\infty] $ is strictly positive on $ (0,\infty) $, then we can parametrize the set of all measures in $ \mathcal{M}_{\infty}(\mathbf{s}) $ with support of minimal cardinality. We will do it now with help of our results on backward extensions.
\begin{corollary}
    \label{CorParametrizationOfMinimalMeasuresOfStrictlyPositiveSequenceEvenInfiniteInterval}
    Let $ \mathbf{s} = (s_0,\ldots,s_n)\in[0,\infty) $ be strictly positive on $ (0,\infty) $, where $ n\in\mathbb{N} $ is even. Denote by $ \mathcal{M}_{\infty}^{\min}(\mathbf{s}) $ the set of all measures $ \mu\in\mathcal{M}_{\infty}(\mathbf{s}) $ satisfying $ \ind_{\infty}(\mu) = \left\lceil \frac{n+1}{2}\right\rceil $. Then the mapping
    \begin{equation*}
        \Phi\!: \mathcal{M}_{\infty}^{\min}(\mathbf{s}) \ni \mu \longmapsto \int_{(0,\infty)} \frac{1}{t}\ddd\mu(t)\in (t_{\infty}(\mathbf{s}),\infty)
    \end{equation*}
    is a bijection. Moreover, if $ s_{-1} \in (t_{\infty}(\mathbf{s}),\infty) $, then atoms of $ \Phi^{-1}(s_{-1}) $ are roots of the polynomial
    \begin{equation}
        \label{FormPolynomialOfMinimalMeasureWithPrescribedBackwardExtensionEvenInfiniteInterval}
        Q(t) = \det\begin{bmatrix}
        s_{-1} & s_{0} & \ldots & s_{K-2} & 1\\
        s_{0} & s_{1} & \ldots & s_{K-1} & t\\
        \vdots & \vdots & \vdots & \vdots & \vdots\\
        s_{K-1} & s_{K} & \ldots & s_{2K-2} & t^{K}
        \end{bmatrix},
    \end{equation}
    where $ K = \left\lceil \frac{n+1}{2}\right\rceil $.
\end{corollary}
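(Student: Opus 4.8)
The plan is to reduce the whole statement to the odd-index results already established, namely Theorem~\ref{ThmStrictlyPositiveBackwardExtensionsInfiniteInterval} and Corollary~\ref{CorUniqueMinimalMeasureStrictlyPositiveInfiniteInterval}, by moving between $ \mathbf{s} $ and its one-step backward extensions via the transformations $ \mu\mapsto\frac{1}{t}\ddd\mu $ and $ \nu\mapsto t\,\ddd\nu $. Write $ n=2m $, so that $ K=\left\lceil\frac{n+1}{2}\right\rceil=m+1 $ and, by Theorem~\ref{ThmStrictlyPositiveSequencesInfiniteInterval}, $ \text{ind}_{\infty}(\mathbf{s})=K $; thus $ \mathcal{M}_{\infty}^{\min}(\mathbf{s}) $ really is the set of $ \mu\in\mathcal{M}_\infty(\mathbf{s}) $ minimizing $ \#\text{supp}\,\mu $. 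First I would record that $ \Phi $ is well defined with values in $ (t_{\infty}(\mathbf{s}),\infty) $: given $ \mu\in\mathcal{M}_{\infty}^{\min}(\mathbf{s}) $, choose $ [a,b]\subset(0,\infty) $ so large that $ \mathbf{s} $ is strictly positive on $ [a,b] $ and $ \text{supp}\,\mu\subset(a,b) $; then $ \text{ind}_{a,b}(\mu)=\#\text{supp}\,\mu=m+1>\frac{n+1}{2} $, so $ \mu $ is not a principal measure of $ \mathbf{s} $ on $ [a,b] $ (Corollary~\ref{CorStrictlyPositiveSequncePrincipalMeasuresCompactInterval}), and Theorem~\ref{ThmExtremalValuesOfIntegralCompactInterval} forces $ \int_{[a,b]}\frac{1}{t}\ddd\mu\in(t_{a,b}(\mathbf{s}),T_{a,b}(\mathbf{s}))\subset(t_{\infty}(\mathbf{s}),\infty) $.

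The central observation is the following dictionary. If $ \mu\in\mathcal{M}_{\infty}^{\min}(\mathbf{s}) $ and $ s_{-1}:=\Phi(\mu) $, then $ \nu:=\frac{1}{t}\ddd\mu $ (legitimate since $ \text{supp}\,\mu $ is a compact subset of $ (0,\infty) $, hence bounded away from $ 0 $) lies in $ \mathcal{M}_{\infty}(\mathbf{s}') $, where $ \mathbf{s}':=(s_{-1},s_0,\ldots,s_n) $, with $ \#\text{supp}\,\nu=\#\text{supp}\,\mu=m+1 $; the moment identities $ \int t^{0}\ddd\nu=s_{-1} $ and $ \int t^{j}\ddd\nu=s_{j-1} $ for $ j\ge1 $ are immediate. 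Because $ s_{-1}\in(t_{\infty}(\mathbf{s}),\infty) $, Theorem~\ref{ThmStrictlyPositiveBackwardExtensionsInfiniteInterval} tells us that $ \mathbf{s}' $ is \emph{strictly positive} on $ (0,\infty) $; and its top index $ n+1=2m+1 $ is \emph{odd}, with $ \left\lceil\frac{(n+1)+1}{2}\right\rceil=m+1 $. Hence Corollary~\ref{CorUniqueMinimalMeasureStrictlyPositiveInfiniteInterval} applies to $ \mathbf{s}' $ and delivers a \emph{unique} measure in $ \mathcal{M}_{\infty}(\mathbf{s}') $ with support of cardinality $ m+1 $.

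Injectivity of $ \Phi $ then follows automatically: $ \Phi(\mu_1)=\Phi(\mu_2)=s_{-1} $ makes $ \nu_i=\frac{1}{t}\ddd\mu_i $ two minimal elements of $ \mathcal{M}_{\infty}(\mathbf{s}') $, so $ \nu_1=\nu_2 $ by the uniqueness just quoted, and therefore $ \mu_1=t\,\ddd\nu_1=t\,\ddd\nu_2=\mu_2 $. For surjectivity I would start from $ s_{-1}\in(t_{\infty}(\mathbf{s}),\infty) $; Theorem~\ref{ThmStrictlyPositiveBackwardExtensionsInfiniteInterval} again makes $ \mathbf{s}' $ strictly positive on $ (0,\infty) $, so let $ \nu $ be the unique measure from Corollary~\ref{CorUniqueMinimalMeasureStrictlyPositiveInfiniteInterval} with $ \#\text{supp}\,\nu=m+1 $, and set $ \mu:=t\,\ddd\nu $. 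Reading the moment identities in the other direction gives $ \mu\in\mathcal{M}_{\infty}(\mathbf{s}) $ with $ \#\text{supp}\,\mu=m+1=K $, so $ \mu\in\mathcal{M}_{\infty}^{\min}(\mathbf{s}) $, while $ \Phi(\mu)=\int_{(0,\infty)}\frac{1}{t}\ddd\mu=\int_{(0,\infty)}1\,\ddd\nu=s_{-1} $.

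Finally, for the ``moreover'' part I would relabel $ \mathbf{s}'=(s_0',\ldots,s_{2m+1}') $ with $ s_0'=s_{-1} $ and $ s_j'=s_{j-1} $; then the determinant in \eqref{FormPolynomialOfMinimalMeasureWithPrescribedBackwardExtensionEvenInfiniteInterval} is precisely the polynomial \eqref{FormPolynomialOfMinimalMeasureOfStrictlyPositiveInfiniteInterval} of Corollary~\ref{CorUniqueMinimalMeasureStrictlyPositiveInfiniteInterval} attached to $ \mathbf{s}' $ (whose top index is the odd number $ 2m+1 $), so its roots are the atoms of $ \nu $, which coincide with the atoms of $ \mu=\Phi^{-1}(s_{-1}) $. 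I do not expect a genuine obstacle here — all the hard analysis sits inside Theorem~\ref{ThmStrictlyPositiveBackwardExtensionsInfiniteInterval} and Corollary~\ref{CorUniqueMinimalMeasureStrictlyPositiveInfiniteInterval}; the one point requiring care is the index bookkeeping that turns the even-index sequence $ \mathbf{s} $ into the odd-index sequence $ \mathbf{s}' $, in particular verifying that the $ (K+1)\times(K+1) $ matrix in \eqref{FormPolynomialOfMinimalMeasureWithPrescribedBackwardExtensionEvenInfiniteInterval}, with its shift of indices by one, lines up exactly with \eqref{FormPolynomialOfMinimalMeasureOfStrictlyPositiveInfiniteInterval}.
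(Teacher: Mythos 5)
Your proof is correct and follows essentially the same route as the paper: both construct the inverse map by sending $ s_{-1} $ to the unique minimal-support measure of the odd-length backward extension $ \mathbf{s}'=(s_{-1},s_0,\ldots,s_n) $ and then multiplying by $ t $. The only cosmetic difference is that you invoke Theorem~\ref{ThmStrictlyPositiveBackwardExtensionsInfiniteInterval} together with Corollary~\ref{CorUniqueMinimalMeasureStrictlyPositiveInfiniteInterval}, whereas the paper cites Theorem~\ref{ThmExistenceOfBackwardExtensionWithPrescribedNumberOfAtomsInfiniteInterval} with $ r=1 $ (and checks well-definedness via Lemma~\ref{LemWhereBackwardExtensionBelongsInfiniteInterval} and Corollary~\ref{CorSingularlyPositiveBackwardExtensionsInfiniteInterval} rather than via principal measures); these yield the same conclusions.
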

\begin{proof}
    First, observe that $ \Phi $ is well-defined. Indeed, if $ \mu\in\mathcal{M}_{\infty}(\mathbf{s}) $, then by Lemma \ref{LemWhereBackwardExtensionBelongsInfiniteInterval}, $ \int_{(0,\infty)} \frac{1}{t}\ddd\mu(t)\in [t_{\infty}(\mathbf{s}), \infty) $. Since $ n $ is even, in view of Corollary \ref{CorSingularlyPositiveBackwardExtensionsInfiniteInterval}, we have $ \int_{(0,\infty)} \frac{1}{t}\ddd\mu(t) > t_{\infty}(\mathbf{s}) $. Let $ \mu\in\mathcal{M}_{\infty}^{\min}(\mathbf{s}) $ and $ s_{-1}\in(t_{\infty}(\mathbf{s}),\infty) $. Applying Theorem \ref{ThmStrictlyPositiveSequencesInfiniteInterval} and Theorem \ref{ThmExistenceOfBackwardExtensionWithPrescribedNumberOfAtomsInfiniteInterval} with $ r = 1 $ and $ K = \left\lceil \frac{n+2}{2}\right\rceil = \left\lceil \frac{n+1}{2}\right\rceil $, we see that $ \mathbf{s}' = (s_{-1},s_{0},\ldots,s_{n}) $ is strictly positive on $ (0,\infty) $. Moreover, by Corollary \ref{CorUniqueMinimalMeasureStrictlyPositiveInfiniteInterval}, there exists the unique measure $ \nu_{s_{-1}}\in\mathcal{M}_{\infty}(\mathbf{s}) $ satisfying $ \ind_{\infty}(\nu_{s_{-1}}) = \left\lceil \frac{n+1}{2}\right\rceil $; its atoms are roots of \eqref{FormPolynomialOfMinimalMeasureOfSequenceWithPrescribedNumberOfAtomsInfiniteInterval}, which in this case becomes \eqref{FormPolynomialOfMinimalMeasureWithPrescribedBackwardExtensionEvenInfiniteInterval}. Then $ t\ddd\nu_{s_{-1}}(t)\in\mathcal{M}_{\infty}(\mathbf{s}) $ and $ \supp t\ddd\nu_{s_{-1}}(t) = \supp\,\nu_{s_{-1}} $. Define the mapping
    \begin{equation*}
        \Psi\!: (t_{\infty}(\mathbf{s}),\infty)\ni s_{-1}\to t\ddd\nu_{s_{-1}}(t)\in \mathcal{M}_{\infty}^{\min}(\mathbf{s}).
    \end{equation*} We show that $ \Psi\circ \Phi = \text{id}_{\mathcal{M}_{\infty}^{\min}(\mathbf{s})} $ and $ \Phi\circ\Psi = \text{id}_{(t_{\infty}(\mathbf{s}),\infty)} $. Let $ \mu\in\mathcal{M}_{\infty}^{\min}(\mathbf{s}) $. Set $ s_{-1} = \Phi(\mu) $. Because of the uniqueness of the measure $ \nu_{s_{-1}} $, we get that $ \frac{1}{t}\ddd\mu(t) = \nu_{s_{-1}} $. Hence, $ \mu = t\nu_{s_{-1}}(t) = \Psi(s_{-1}) $. Conversely, let $ s_{-1}\in (t_{\infty}(\mathbf{s}),\infty) $ and set $ \mu = \Psi(s_{-1}) = t\ddd\nu_{s_{-1}} $. Then $ \Phi(\mu) = \int_{(0,\infty)}\frac{1}{t}\ddd\mu(t) = \int_{(0,\infty)} \frac{1}{t} \cdot t\ddd\nu_{s_{-1}}(t) = s_{-1} $.
\end{proof}

	\section{Subnormal completions of weighted shifts on directed trees.}
\label{SecSubnormalCompletions}
In \cite[Chapter 6]{JJS1} the authors studied subnormal weighted shifts on directed trees and characterized subnormality by certain moment sequences (the obtained characterization involves also negative moments, see \cite[Theorem 6.2.1]{JJS1}). Using this apporach, in \cite{EJSY1} the authors investigated the subnormal completion problem for weighted shifts on trees with one branching point in general; in \cite{EJSY2} the same authors studied in details 2-generation subnormal completions (see \cite[Theorem 4.1]{EJSY2} for the solution of the 2-generation subnormal completion). In this section we will exploit the theory developed in the previous sections to obtain a solution of the subnormal completion problem for weighted shifts on directed trees with one branching point in full generality. \par
If $ \mathcal{T} = (V,E) $ is a directed tree, then \cite[Theorem 6.1.3]{JJS1} states that a bounded weighted shift $ S_{\pmb{\lambda}}\in\mathbf{B}(\ell^{2}(V)) $ on $ \mathcal{T} $ is subnormal if and only if the sequence $ (\lVert S^{n}_{\pmb{\lambda}} e_{u}\rVert^{2})_{n=0}^{\infty} $ is a Stieltjes moment sequence for every $ u\in V $. Moreover, the measure representing the moment sequence $ (\lVert S^{n}_{\pmb{\lambda}} e_{u}\rVert^{2})_{n=0}^{\infty} $ is unique; we denote it by $ \mu^{\pmb{\lambda}}_{u} $.\par
The next result generalizes \cite[Theorem 4.1]{EJSY2}. \par
\begin{theorem}
    \label{ThmSubnormalCompletion}
    Let $ \kappa\in\mathbb{N} $, $ \eta \in\overline{\mathbb{N}}_{2} $, $ p\in\mathbb{N}_{1} $. Let $ \pmb{\lambda}= \{\lambda_{-k}\}_{k=0}^{\kappa-1}\cup \{\lambda_{i,j} \}_{i,j=1}^{\eta,p} \subset (0,\infty) $ and let $ \{K_{i}\}_{i=1}^{\eta} \subset\mathbb{N}\cap [1,\left\lceil \frac{p+\kappa+1}{2}\right\rceil] $. Then the following conditions are equivalent:
    \begin{enumerate}
        \item there exists a subnormal completion $ S_{\pmb{\lambda'}}\in \mathbf{B}(\ell^{2}(V_{\eta,\kappa})) $ of $ \pmb{\lambda} $ such that for every $ i\in\mathbb{N}\cap[1,\eta] $ the measure $ \mu_{i,1}^{\pmb{\lambda'}} $ is $ K_{i} $-atomic,
        \item for every $ i\in\mathbb{N}\cap[1,\eta] $ there exists a sequence $ \{s_{i,-k}\}_{k=1}^{\kappa+1}\subset (0,\infty) $ such that for every $ i\in\mathbb{N}\cap[1,\eta] $
        \begin{align}
            \label{FormSubnormalCompletionStrictlyPositiveMoments}
            &s_{i,k} \in (t_{\infty}((s_{i,k+1},\ldots,s_{p-1})),+\infty), \qquad k\in\mathbb{Z}\cap [p-N_i-1,p-2],\\
            \label{FormSubnormalCompletionSingularlyPositiveMoments}
            &s_{i,k} = t_{\infty}((s_{i,k+1},\ldots,s_{k+N_i+1})), \qquad k\in\mathbb{Z}\cap [-\kappa-1,p-N_i-2]
        \end{align}
        and\footnote{Here and in the subsequent parts we follow the convention that $ \prod\varnothing = 1 $.}
        \begin{align}
            \label{FormSubnormalCompletionEquality}
            &\sum_{i=1}^{\eta} \lambda_{i,1}^{2}s_{i,-k-1} = \frac{1}{\prod_{j=0}^{k-1}\lambda_{-j}^{2}}, \qquad k\in\mathbb{N}\cap[0,\kappa-1],\\
            \label{FormSubnormalCompletionInequality}
            &\sum_{i=1}^{\eta} \lambda_{i,1}^{2}s_{i,-\kappa-1} \le \frac{1}{\prod_{j=0}^{\kappa-1}\lambda_{-j}^{2}},\\
            \label{FormSubnormalCompletionSupremumOfAtoms}
            &\sup_{i\in\mathbb{N}\cap [1,\eta]} -\frac{Q_{i}^{(K_i-1)}(0)}{Q_{i}^{(K_i)}(0)} < \infty,
        \end{align} 
        where $ s_{i,k} = \prod_{j=2}^{k+1} \lambda_{i,j}^{2} $, $ k\in \mathbb{N}\cap [0,p-1] $, $ N_i = 2K_i - 1 $ and $ Q_i\in\mathbb{R}_{K_{i}}[x] $ is a polynomial with simple zeros at atoms of $ {\mu\in\mathcal{M}_{\infty}((s_{i,-\kappa-1},\ldots,s_{i,p-1}))} $ satisfying $ \#\text{supp}\,\mu = K_i $ for $ i\in\mathbb{N}\cap[1,\eta] $.
    \end{enumerate}
\end{theorem}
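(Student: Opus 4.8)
The plan is to route both implications through Lemma~\ref{LemGeneralSubnormalCompletion}: a subnormal completion $S_{\pmb{\lambda'}}$ with each $\mu_{i,1}^{\pmb{\lambda'}}$ being $K_i$-atomic exists if and only if there are Borel measures $(\mu_i)_{i=1}^{\eta}$ on $(0,\infty)$, each $K_i$-atomic, satisfying \eqref{FormConditionOnMoments1}--\eqref{FormSupremumOfSupremums} (the identification $\mu_i=\mu_{i,1}^{\pmb{\lambda'}}$ being the ``moreover'' part of the lemma). Fixing $i$, I would attach to $\mu_i$ its backward moments $s_{i,-k}:=\int_{(0,\infty)}t^{-k}\ddd\mu_i$, $k\in\mathbb{N}\cap[1,\kappa+1]$; these are finite since $\mu_i$ is finitely atomic and $\int t^{-1}\ddd\mu_i<\infty$ by \eqref{FormConditionOnMoments2}--\eqref{FormConditionOnMoments3}, so $\mathrm{supp}\,\mu_i$ stays away from $0$. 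Then $\mu_i\in\mathcal{M}_{\infty}\big((s_{i,-\kappa-1},\ldots,s_{i,p-1})\big)$, with $s_{i,k}=\prod_{j=2}^{k+1}\lambda_{i,j}^{2}$ for $k\ge 0$ prescribed, and the theorem reduces to the branchwise equivalence between ``$(s_{i,-\kappa-1},\ldots,s_{i,p-1})$ admits a $K_i$-atomic representing measure on $(0,\infty)$'' and the positivity conditions \eqref{FormSubnormalCompletionStrictlyPositiveMoments}--\eqref{FormSubnormalCompletionSingularlyPositiveMoments}, together with the bookkeeping that rewrites \eqref{FormConditionOnMoments2}--\eqref{FormSupremumOfSupremums} as \eqref{FormSubnormalCompletionEquality}--\eqref{FormSubnormalCompletionSupremumOfAtoms}.

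For (i)$\Rightarrow$(ii) I would first establish $\text{ind}_{\infty}\big((s_{i,-\kappa-1},\ldots,s_{i,p-1})\big)=K_i$: the inequality ``$\le$'' is clear, and for ``$\ge$'' one uses that $\mu_i$ also represents the \emph{infinite} forward sequence, so any competitor $\nu$ of the truncation with $L:=\#\mathrm{supp}\,\nu<K_i$ agrees with $\mu_i$ on the $p+\kappa+1$ moments of indices $-\kappa-1,\ldots,p-1$; since $L+K_i\le 2K_i-1\le p+\kappa+1$ --- which is exactly where $K_i\le\left\lceil\frac{p+\kappa+1}{2}\right\rceil$ is used --- a Vandermonde argument applied to $t^{\kappa+1}(\nu-\mu_i)$ forces $\nu=\mu_i$, a contradiction. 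With the index in hand, Theorem~\ref{ThmConditionsOnMomentsOfMinimalMeasureInfiniteInterval} applied to this sequence of ``degree'' $p+\kappa$, index $K_i$, $N_i=2K_i-1$, yields, after shifting indices by $\kappa+1$, exactly \eqref{FormSubnormalCompletionStrictlyPositiveMoments} and \eqref{FormSubnormalCompletionSingularlyPositiveMoments}; relations \eqref{FormSubnormalCompletionEquality}--\eqref{FormSubnormalCompletionInequality} are \eqref{FormConditionOnMoments2}--\eqref{FormConditionOnMoments3} rewritten with $s_{i,-k}=\int t^{-k}\ddd\mu_i$; and since $Q_i$ (from Corollary~\ref{CorSingularlyPositiveBackwardExtensionsInfiniteInterval}, Corollary~\ref{CorUniqueMinimalMeasureStrictlyPositiveInfiniteInterval} or Corollary~\ref{CorParametrizationOfMinimalMeasuresOfStrictlyPositiveSequenceEvenInfiniteInterval}, according to the parity of $p+\kappa$ and whether $K_i$ is maximal) has degree $K_i\le\left\lceil\frac{p+\kappa+1}{2}\right\rceil$ with roots the atoms of $\mu_i$, the quantity in \eqref{FormSubnormalCompletionSupremumOfAtoms} equals $\sup\mathrm{supp}\,\mu_i$, so \eqref{FormSupremumOfSupremums} becomes \eqref{FormSubnormalCompletionSupremumOfAtoms}.

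For (ii)$\Rightarrow$(i) I would build $\mu_i$ branch by branch. Starting from $(s_{i,p-1})$, which is trivially strictly positive on $(0,\infty)$ with $t_{\infty}=0$, I iterate Theorem~\ref{ThmStrictlyPositiveBackwardExtensionsInfiniteInterval} along the strict inequalities \eqref{FormSubnormalCompletionStrictlyPositiveMoments} ($k=p-2,p-3,\ldots$) to reach a strictly positive window $\mathbf{s}_*=(s_{i,p-N_i-1},\ldots,s_{i,p-1})$ of degree $N_i$, with $\text{ind}_{\infty}(\mathbf{s}_*)=\left\lceil\frac{N_i+1}{2}\right\rceil=K_i$ by Theorem~\ref{ThmStrictlyPositiveSequencesInfiniteInterval}. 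When $K_i<\left\lceil\frac{p+\kappa+1}{2}\right\rceil$, so that $r:=(p+\kappa)-N_i\ge 1$, I apply Theorem~\ref{ThmExistenceOfBackwardExtensionWithPrescribedNumberOfAtomsInfiniteInterval} to $\mathbf{s}_*$ with $r$ backward steps and target index $K_i$ (the membership requirement $K_i\in\big[\left\lceil\frac{N_i+1}{2}\right\rceil,\left\lceil\frac{N_i+r+1}{2}\right\rceil\big]=\big[K_i,\left\lceil\frac{p+\kappa+1}{2}\right\rceil\big]$ holds by hypothesis); since its parameter $2K_i-1$ equals $\deg\mathbf{s}_*$, its first condition is vacuous and its second is precisely \eqref{FormSubnormalCompletionSingularlyPositiveMoments}, so it produces $(s_{i,-\kappa-1},\ldots,s_{i,p-1})$ as a moment sequence on $(0,\infty)$ of index $K_i$ together with a $K_i$-atomic $\mu_i$ whose atoms are the roots of $Q_i$ as in \eqref{FormPolynomialOfMinimalMeasureOfSequenceWithPrescribedNumberOfAtomsInfiniteInterval}. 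When $K_i$ is maximal the window $\mathbf{s}_*$ already exhausts the sequence, \eqref{FormSubnormalCompletionSingularlyPositiveMoments} is empty, and I instead take $\mu_i$ from Corollary~\ref{CorUniqueMinimalMeasureStrictlyPositiveInfiniteInterval} ($p+\kappa$ odd) or Corollary~\ref{CorParametrizationOfMinimalMeasuresOfStrictlyPositiveSequenceEvenInfiniteInterval} ($p+\kappa$ even, the family there being parametrized by $\int\frac1t\ddd\mu_i$), chosen so that \eqref{FormSubnormalCompletionSupremumOfAtoms} is met. In all cases $\int t^{k}\ddd\mu_i=s_{i,k}$ and $\int t^{-k}\ddd\mu_i=s_{i,-k}$ in the required ranges, so \eqref{FormConditionOnMoments1} holds, \eqref{FormSubnormalCompletionEquality}--\eqref{FormSubnormalCompletionInequality} give \eqref{FormConditionOnMoments2}--\eqref{FormConditionOnMoments3}, and \eqref{FormSubnormalCompletionSupremumOfAtoms} gives \eqref{FormSupremumOfSupremums}; Lemma~\ref{LemGeneralSubnormalCompletion} then furnishes the desired completion with $\mu_{i,1}^{\pmb{\lambda'}}=\mu_i$.

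The index manipulations and the reindexing by $\kappa+1$ are routine; the parts I expect to require care are: (a) the Vandermonde uniqueness step pinning the atom count of $\mu_{i,1}^{\pmb{\lambda'}}$ to the index of the finite truncation, which is tight and uses the hypothesis $K_i\le\left\lceil\frac{p+\kappa+1}{2}\right\rceil$ sharply; (b) the boundary case $p+\kappa$ even with $K_i$ maximal, where $N_i=p+\kappa+1$ overshoots the available index range, Theorem~\ref{ThmExistenceOfBackwardExtensionWithPrescribedNumberOfAtomsInfiniteInterval} no longer applies, and one must build the full (now strictly positive) sequence by Theorem~\ref{ThmStrictlyPositiveBackwardExtensionsInfiniteInterval} alone and select the right minimal measure via Corollary~\ref{CorParametrizationOfMinimalMeasuresOfStrictlyPositiveSequenceEvenInfiniteInterval}; and (c) verifying that the determinantal quantity in \eqref{FormSubnormalCompletionSupremumOfAtoms}, built from $Q_i$, genuinely computes $\sup\mathrm{supp}\,\mu_i$, so that the boundedness conditions \eqref{FormSupremumOfSupremums} and \eqref{FormSubnormalCompletionSupremumOfAtoms} truly correspond --- this matters most when $\eta=\infty$, where only a uniform bound over $i$ suffices and the choice of the $\mu_i$ is not free.
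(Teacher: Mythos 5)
Your proposal is correct and follows essentially the same route as the paper: both implications are funneled through Lemma~\ref{LemGeneralSubnormalCompletion}, the backward moments $s_{i,-k}=\int t^{-k}\ddd\mu_i$ are handled via Theorems~\ref{ThmConditionsOnMomentsOfMinimalMeasureInfiniteInterval} and~\ref{ThmExistenceOfBackwardExtensionWithPrescribedNumberOfAtomsInfiniteInterval}, and \eqref{FormSupremumOfSupremums} is matched with \eqref{FormSubnormalCompletionSupremumOfAtoms} through the low-order coefficients of $Q_i$ (your Vandermonde step pinning $\mathrm{ind}_{\infty}$ to $K_i$ is left implicit in the paper but is a legitimate way to justify it). The only wording to adjust is in your point (c): by Vieta's formulas $-Q_{i}^{(K_i-1)}(0)/Q_{i}^{(K_i)}(0)$ is (a normalization of) the \emph{sum} of the atoms of $\mu_i$, not $\sup\mathrm{supp}\,\mu_i$, but since $1\le K_i\le\left\lceil\frac{p+\kappa+1}{2}\right\rceil$ the two quantities are comparable up to constants depending only on $p$ and $\kappa$, which is exactly the two-sided estimate the paper uses to pass between \eqref{FormSupremumOfSupremums} and \eqref{FormSubnormalCompletionSupremumOfAtoms}.
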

\begin{proof}
    (i)$ \Longrightarrow $(ii). By (i) and \cite[Lemma 4.7]{EJSY1} we see that
    \begin{align}
        \label{FormConditionOnMoments1}
        &\int_{0}^{\infty} t^{k}\ddd\mu_i(t) = \prod_{j=2}^{k+1}\lambda_{i,j}^{2}, \qquad k\in\mathbb{N}\cap [0,p-1], \ i\in\mathbb{N}\cap [1,\eta],\\
        \label{FormConditionOnMoments2}
        &\sum_{i=1}^{\eta} \lambda_{i,1}^{2}\int_{0}^{\infty} \frac{1}{t^{k+1}}\ddd\mu_i(t) = \frac{1}{\prod_{j=0}^{k-1}\lambda_{-j}^{2}}, \qquad k\in\mathbb{N}\cap[0,\kappa-1], \\
        \label{FormConditionOnMoments3}
        &\sum_{i=1}^{\eta} \lambda_{i,1}^{2}\int_{0}^{\infty} \frac{1}{t^{\kappa+1}}\ddd\mu_i(t) \le \frac{1}{\prod_{j=0}^{\kappa-1}\lambda_{-j}^{2}},\\
        \label{FormSupremumOfSupremums}
        &\sup_{i\in\mathbb{N}\cap [1,\eta]} \sup\mathrm{supp}\,\mu_i <\infty.
        \end{align}
    where $ \mu_i = \mu_{i,1}^{\pmb{\lambda}'} $ is $ K_i $-atomic, $ i\in\mathbb{N}\cap [1,\eta] $. Set
    \[ s_{i,-k} = \int_{0}^{\infty} \frac{1}{t^{k}}\ddd\mu_i(t), \qquad k\in\mathbb{N}\cap[1,\kappa+1],\ i\in\mathbb{N}\cap[1,\eta]. \]
    Then, for every $ i\in\mathbb{N}\cap [1,\eta] $ the sequence $ (s_{i,-\kappa-1},\ldots,s_{i,p-1}) $ is a moment sequence on $ (0,\infty) $ of index $ K_i $. The application of Theorem \ref{ThmConditionsOnMomentsOfMinimalMeasureInfiniteInterval} gives \eqref{FormSubnormalCompletionStrictlyPositiveMoments} and \eqref{FormSubnormalCompletionSingularlyPositiveMoments}. The formulas \eqref{FormSubnormalCompletionEquality} and \eqref{FormSubnormalCompletionInequality} follow from the definition of $ s_{i,k} $. It is enough to show that \eqref{FormSubnormalCompletionSupremumOfAtoms} holds. Let $ \xi_{i,1},\ldots,\xi_{i,K_i}\subset (0,\infty) $ be atoms of $ \mu_i $ written in the increasing order. Then $ \sup_{i\in\mathbb{N}\cap [1,\eta]}(\sup \supp\mu_i) = \sup_{i\in\mathbb{N}\cap [1,\eta]} \xi_{i,K_i} $. Next, observe that
    \begin{align*}
        \sup_{i\in\mathbb{N}\cap [1,\eta]} \xi_{i,K_i} &\le \sup_{i\in\mathbb{N}\cap [1,\eta]}(\xi_{i,1}+\ldots+\xi_{i,K_i}) \le \sup_{i\in\mathbb{N}\cap [1,\eta]} K_i\xi_{i,K_i}\\ &\le \left\lceil \frac{p+\kappa+1}{2}\right\rceil\sup_{i\in\mathbb{N}\cap [1,\eta]} \xi_{i,K_i} .
    \end{align*}
    Hence,
    \begin{equation}
        \label{FormSupremumOfAtoms}
        \sup_{i\in\mathbb{N}\cap [1,\eta]} \xi_{i,K_i} <\infty \iff \sup_{i\in\mathbb{N}\cap [1,\eta]}(\xi_{i,1}+\ldots+\xi_{i,K_i}) <\infty.
    \end{equation}
    Define $ Q_i(t) = \prod_{j=1}^{K_i}(t-\xi_{i,j}) $. Using Vieta's formula (see \cite{W1}), we get
    \[  \xi_{i,1}+\ldots+\xi_{i,K_i} = -\frac{1}{K_i}\frac{Q_i^{(K_i-1)}(0)}{Q_i^{(K_i)}(0)}, \qquad i\in\mathbb{N}\cap[1,\eta] \]
    But
    \[ -\frac{1}{\left\lceil \frac{p+\kappa+1}{2}\right\rceil}\frac{Q_i^{(K_i-1)}(0)}{Q_i^{(K_i)}(0)} \le-\frac{1}{K_i}\frac{Q_i^{(K_i-1)}(0)}{Q_i^{(K_i)}(0)}\le -\frac{Q_i^{(K_i-1)}(0)}{Q_i^{(K_i)}(0)}, \qquad i\in\mathbb{N}\cap[1,\eta] \]
    so
    \begin{equation}
        \label{FormSupremumOfRoots}
        \sup_{i\in\mathbb{N}\cap [1,\eta]}(\xi_{i,1}+\ldots+\xi_{i,K_i}) <\infty \iff   \sup_{i\in\mathbb{N}\cap [1,\eta]}-\frac{Q_i^{(K_i-1)}(0)}{Q_i^{(K_i)}(0)} < \infty.
    \end{equation}
    (ii)$ \Longrightarrow $(i). It follows from \eqref{FormSubnormalCompletionStrictlyPositiveMoments}, \eqref{FormSubnormalCompletionSingularlyPositiveMoments} and Theorem \ref{ThmExistenceOfBackwardExtensionWithPrescribedNumberOfAtomsInfiniteInterval} that for every $ i\in\mathbb{N}\cap[1,\eta] $ the sequence $ \mathbf{s}_i = (s_{i,-\kappa-1},\ldots,s_{i,p-1}) $ is a positive sequence on $ (0,\infty) $ of index $ K_i $. Let $ \nu_i\in\mathcal{M}_{\infty}(\mathbf{s}_i) $ be $ K_i $-atomic. Set $ \mu_i = t^{\kappa+1}\ddd\nu_i $. It can be easily seen that, by \eqref{FormSubnormalCompletionEquality} and \eqref{FormSubnormalCompletionInequality}, conditions \eqref{FormConditionOnMoments1}, \eqref{FormConditionOnMoments2} and \eqref{FormConditionOnMoments3} are satisfied. Proceeding in the similar way as to obtain \eqref{FormSupremumOfAtoms} and \eqref{FormSupremumOfRoots} we get \eqref{FormSupremumOfSupremums}. Using \cite[Lemma 4.7]{EJSY1} we obtain (i), which completes the proof.
\end{proof}
\begin{remark}
    \label{RemWhyCanLookForFinitelyAtomicMeasuresInSubnormalCompletion}
    Under the assumptions of Theorem \ref{ThmSubnormalCompletion}, if $ \pmb{\lambda} $ has a subnormal completion $ S_{\pmb{\lambda}'} $, then for every $ i\in \mathbb{N}\cap[1,\eta] $ the sequence $ \left( \int_{0}^{\infty} t^{k}\ddd\mu_{i,1}^{\pmb{\lambda'}}(t) \right)_{k=-\kappa-1}^{p-1} $ is a moment sequence on $ (0,\infty) $. By Lemma \ref{LemIndexEstimationInfiniteInterval}, this sequence has always a representing measure, which has at most $ \left\lceil \frac{p+\kappa+1}{2}\right\rceil $ atoms, so after changing weights $ \lambda'_{i,k} $ ($ i\in \mathbb{N}\cap[1,\eta], \ k\in \mathbb{N}_{p+1} $) we can obtain another subnormal completion $ S_{\pmb{\lambda}''} $ such that all measures $ \mu_{i,1}^{\pmb{\lambda''}} $ ($ i\in \mathbb{N}\cap[1,\eta] $) are finitely atomic with at most $ \left\lceil \frac{p+\kappa+1}{2}\right\rceil $ atoms. Therefore, Theorem \ref{ThmSubnormalCompletion} provides a full description of sequences $ \pmb{\lambda} $ having subnormal completion.
\end{remark}
In the next example we show that \cite[Theorem 4.1]{EJSY2} can be derived from Theorem \ref{ThmSubnormalCompletion}.
\begin{example}
    We investigate the case $ \kappa = 1 $, $ \eta\in\overline{\mathbb{N}}_{2} $, $ p = 2 $ and $ K_i = 2 $ for $ i\in\mathbb{N}\cap[1,\eta] $. Since for any $ t\in (0,\infty) $ the sequence $ (1,t) $ is a moment sequence on $ (0,\infty) $, we have to check only \eqref{FormSubnormalCompletionEquality} and \eqref{FormSubnormalCompletionInequality}. Note that, in our setting, the condition \eqref{FormSubnormalCompletionSingularlyPositiveMoments} does not hold for any $ i\in\mathbb{N}\cap[1,\eta] $. First, we compute $ t_{\infty}((1,\lambda_{i,2}^{2})) $. From \cite[Theorem IV.1.1]{KN1}, \cite[Section III.\S 5.3]{KN1} and \eqref{FormBackwardExtensionCompactIntervalInfimum}--\eqref{FormBackwardExtensionCompactIntervalSupremum} we know that $ t_{\infty}((1,\lambda_{i,2}^{2})) = t_{a,b}((1,\lambda_{i,2}^{2})) = -\frac{\underline{P}_{i}(0)}{\underline{Q}_{i}(0)} $, where
    \begin{align*}
        \underline{Q}_{i}(t) &= \det\begin{bmatrix}
        1 & 1\\
        \lambda_{i,2}^{2} & t
        \end{bmatrix}, \qquad  i\in\mathbb{N}\cap[1,\eta],\\
        \underline{P}_{i}(0) &= \sigma\left(\frac{\underline{Q}_{i}(t)-\underline{Q}_{i}(0)}{t}\right),\qquad i\in\mathbb{N}\cap[1,\eta].
    \end{align*}
    By simple calculations we get $ t_{\infty}((1,\lambda_{i,2}^{2})) = \frac{1}{\lambda_{i,2}^{2}} $, $ i\in\mathbb{N}\cap[1,\eta] $. Next, assuming $ s_{i,-1}\in (\frac{1}{\lambda_{i,2}^{2}},\infty) $ is chosen, we compute $ t_{\infty}((s_{i,-1},1,\lambda_{i,2}^{2})) $. Again, from \cite[Theorem IV.1.1]{KN1}, \cite[Section III.\S 5.3]{KN1} and \eqref{FormBackwardExtensionCompactIntervalInfimum}--\eqref{FormBackwardExtensionCompactIntervalSupremum}, it follows that $ t_{a,b}(((s_{i,-1},1,\lambda_{i,2}^{2}))) $ depends on $ b $ (but not on $ a $). Hence, by the fact that $ t_{a,b}(\,\cdot\,) $ decreases when $ b $ increases, we have that for every $ i\in\mathbb{N}\cap[1,\eta] $,
    \begin{equation*}
        t_{\infty}((s_{i,-1},1,\lambda_{i,2}^{2})) = \lim_{b\to\infty} t_{a,b}((s_{i,-1},1,\lambda_{i,2}^{2})) = \lim_{b\to\infty} - \frac{\overline{P}_{i}^{a,b}(0)}{\overline{Q}_{i}^{a,b}(0)},
    \end{equation*}
    where
    \begin{align*}
        \overline{Q}_{i}^{a,b}(t) &= (b-t)\det\begin{bmatrix}
        bs_{i,-1}-1 & 1\\
        b-\lambda_{i,2}^{2} & t
        \end{bmatrix}, \qquad i\in\mathbb{N}\cap[1,\eta],\ a,b\in(0,\infty),\ a<b\\
        \overline{P}_{i}^{a,b}(0) &= \sigma\left(\frac{\overline{Q}_{i}^{a,b}(t)-\overline{Q}_{i}^{a,b}(0)}{t}\right), \qquad i\in\mathbb{N}\cap[1,\eta],\ a,b\in(0,\infty),\ a<b.
    \end{align*}
    By simple calculations we get
    \begin{align*}
        t_{\infty}((s_{i,-1},1,\lambda_{i,2}^{2})) &= \lim_{b\to\infty} \frac{b^{2}s_{i,-1}^{2}-bs_{i,-1}+s_{i,-1}+s_{i,-1}\lambda_{i,2}^{2}}{b^{2}-b\lambda_{i,2}^{2}} \\
        &= s_{i,-1}^{2} > 0, \qquad i\in\mathbb{N}\cap[1,\eta].
    \end{align*}
    Hence, $ s_{i,-1}\in (\frac{1}{\lambda_{i,2}^{2}},\infty) $ and $ s_{i,-2}\in (s_{i,-1}^{2},\infty) $. Set $ r_{i} = \lambda_{i,2}^{2}s_{i,-1} \in (1,\infty) $ and $ \vartheta_{i} = \frac{s_{i,-2}}{s_{i,-1}^{2}} \in (1,\infty) $, $ i\in\mathbb{N} \cap[1,\eta] $. Then \eqref{FormSubnormalCompletionEquality} and \eqref{FormSubnormalCompletionInequality} take the form
    \begin{align*}
        &\sum_{i=1}^{\eta} \frac{\lambda_{i,1}^{2}}{\lambda_{i,2}^{2}}r_{i} = 1\\
        &\sum_{i=1}^{\eta} \frac{\lambda_{i,1}^{2}}{\lambda_{i,2}^{4}}\vartheta_{i}r_{i}^{2} \le \frac{1}{\lambda_{0}^{2}},
    \end{align*}
    as in \cite[Eq. (4.2) and (4.3)]{EJSY2}.
    From Theorem \ref{ThmExistenceOfBackwardExtensionWithPrescribedNumberOfAtomsInfiniteInterval} we get that the polynomials $ Q_{i} $ in Theorem \ref{ThmSubnormalCompletion} are given by the formula:
    \begin{align*}
        &Q_i(t) = \det\begin{bmatrix}
        s_{i,-2} & s_{i,-1} & 1\\
        s_{i,-1} & 1 & t\\
        1 & \lambda_{i,2}^{2} & t^{2}
        \end{bmatrix}, \qquad i\in\mathbb{N}\cap[1,\eta].
    \end{align*} 
    Then, we have
    \begin{align*}
        &Q_{i}'(0) = \det\begin{bmatrix}
        s_{i,-2} & s_{i,-1} & 0\\
        s_{i,-1} & 1 & 1\\
        1 & \lambda_{i,2}^{2} & 0
        \end{bmatrix} = -\lambda_{i,2}^{2}s_{i,-2}+s_{i,-1}, \qquad i\in\mathbb{N}\cap[1,\eta].
    \end{align*}
    and
    \begin{align*}
        &Q_{i}''(0) = \det\begin{bmatrix}
        s_{i,-2} & s_{i,-1} & 0\\
        s_{i,-1} & 1 & 0\\
        1 & \lambda_{i,2}^{2} & 1
        \end{bmatrix} = s_{i,-2}-s_{i,-1}^{2}, \qquad i\in\mathbb{N}\cap[1,\eta].
    \end{align*}
    Therefore,
    \begin{align*}
        -\frac{Q_{i}'(0)}{Q_{i}''(0)} &= \frac{\frac{r_{i}^{2}\vartheta_{i}}{\lambda_{i,2}^{2}}-\frac{r_{i}}{\lambda_{i,2}^{2}}}{\frac{r_{i}^{2}\vartheta_{i}}{\lambda_{i,2}^{4}}} = \lambda_{i,2}^{2} \frac{\vartheta_{i}r_{i}-1}{r_{i}(\vartheta_i-1)}, \qquad i\in\mathbb{N}\cap[1,\eta].
    \end{align*}
    Hence, by the above equality, \eqref{FormSubnormalCompletionSupremumOfAtoms} is equivalent to \cite[Eq. (4.1)]{EJSY2}. Consequently, \cite[Theorem 4.1]{EJSY2} can be recovered from our result.
\end{example}
Let us make one more remark about \eqref{FormSubnormalCompletionSupremumOfAtoms}. Obviously, this condition matters only when $ \eta = \infty $. When we take $ K_i \le \frac{p+\kappa+1}{2} $, then polynomial $ Q_i $ is given as in Theorem \ref{ThmExistenceOfBackwardExtensionWithPrescribedNumberOfAtomsInfiniteInterval}. However, when $ p+\kappa $ is even, then it can occur $ K_i = \frac{p+\kappa+2}{2} $, so that Theorem \ref{ThmExistenceOfBackwardExtensionWithPrescribedNumberOfAtomsInfiniteInterval} gives no help in finding appropriate polynomial $ Q_{i} $. Of course, if there are only finitely many indices $ i\in\mathbb{N}\cap [1,\eta] $, for which $ K_i = \frac{p+\kappa+2}{2} $, then we can simply skip these indices when checking \eqref{FormSubnormalCompletionSupremumOfAtoms}. The problem appears when there are infinitely many such indices. Observe that 
\[ \sup_{i\in\mathbb{N}\cap [1,\eta]} -\frac{Q_{i}^{(K_i-1)}(0)}{Q_{i}^{(K_i)}(0)} = \max \left\{\sup_{i\in I_1} -\frac{Q_{i}^{(K_i-1)}(0)}{Q_{i}^{(K_i)}(0)}, \sup_{i\in I_2} -\frac{Q_{i}^{(K_i-1)}(0)}{Q_{i}^{(K_i)}(0)} \right\}, \]
where $ I_1 = \{i\in\mathbb{N}\cap[1,\eta]\!: K_i \le \frac{p+\kappa+1}{2} \} $ and $ I_2 = \{i\in\mathbb{N}\cap[1,\eta]\!: K_i = \frac{p+\kappa+2}{2} \} $. For $ i\in I_2 $ set $ \mathbf{s}_{i} := (s_{i,-\kappa-1},\ldots,s_{i,p-1}) $ and denote by $ \Phi_{i} $ the bijection given by Corollary \ref{CorParametrizationOfMinimalMeasuresOfStrictlyPositiveSequenceEvenInfiniteInterval} for the sequence $ \mathbf{s}_{i} $. If $ i\in I_{2} $, then by $ Q_{i,t} $ denote the polynomial with roots at atoms of the measure $ \Phi_{i}^{-1}(t) $, $ t\in(t_{\infty}(\mathbf{s}_{i}),\infty) $. By Corollary \ref{CorParametrizationOfMinimalMeasuresOfStrictlyPositiveSequenceEvenInfiniteInterval}, the polynomials $ Q_{i} $, $ i\in I_2 $, are of the form $ Q_i = Q_{i,s_{i,-\kappa-2}} $ for some $ s_{i,-\kappa-2}\in (t_{\infty}(\mathbf{s}_{i}),\infty) $. It is easy to see that $ \sup_{i\in\mathbb{N}\cap [1,\eta]} -\frac{Q_{i}^{(K_i-1)}(0)}{Q_{i}^{(K_i)}(0)} < \infty $ if and only if $ \max\{S_1, S_2 \} < \infty $, where
\begin{align*}
    S_1 &= \sup_{i\in I_1} -\frac{Q_{i}^{(K_i-1)}(0)}{Q_{i}^{(K_i)}(0)}\\
    S_2 &= \sup_{i\in I_2} \inf\left\{ -\frac{(Q_{i,s_{i,-\kappa-2}})^{(K_i-1)}(0)}{Q_{i}^{(K_i)}(0)}\!: s_{i,-\kappa-2}\in (t_{\infty}(\mathbf{s}_{i}),\infty)\right\},
\end{align*}
which gives us the convenient way of checking \eqref{FormSubnormalCompletionSupremumOfAtoms} in any case.\par
Let $ \kappa \in\mathbb{N} $, $ \eta\in\overline{\mathbb{N}}_{2} $, $ p\in\mathbb{N}_{1} $. Assume $ \pmb{\lambda} = \{\lambda_{-k}\}_{k=0}^{\kappa-1}\cup \{\lambda_{i,j} \}_{i,j=1}^{\eta,p} \subset (0,\infty) $ admits a subnormal completion. For $ M \in (0,\infty) $ denote by $ \mathcal{U}_{\pmb{\lambda}}^{M} $ the set of all sequences $ (\mu_{i})_{i=1}^{\eta} $ of Borel measures on $ [0,M] $ satisfying
\begin{align}
    \label{FormMeasuresInSubnormalCompletionCondition1}
    &\prod_{j=2}^{n+1}\lambda_{i,j}^{2} = \int_{0}^{M}t^{n}\ddd\mu_i(t), \qquad i\in\mathbb{N}\cap [1,\eta], \ n\in\mathbb{N}\cap[0,p-1],\\
    \label{FormMeasuresInSubnormalCompletionCondition2}
    &\sum_{i=1}^{\eta}\lambda_{i,1}^{2}\int_{0}^{M}\frac{1}{t^{k+1}}\ddd\mu_i(t) = \frac{1}{\prod_{j=0}^{k-1}\lambda_{-j}^{2}}, \qquad k\in\mathbb{N}\cap[0,\kappa-1],\\
    \label{FormMeasuresInSubnormalCompletionCondition3}
    &\sum_{i=1}^{\eta}\lambda_{i,1}^{2}\int_{0}^{M}\frac{1}{t^{\kappa+1}}\ddd\mu_i(t) \le \frac{1}{\prod_{j=0}^{\kappa-1}\lambda_{-j}^{2}}.
\end{align}
Obviously, $ \mathcal{U}_{\pmb{\lambda}}^{M} \subset \mathcal{P}([0,M])^{\eta} $, where $ \mathcal{P}([0,M]) $ stands for the set of all Borel probability measures on $ [0,M] $. By Banach-Alaoglu and Riesz representation theorems we see that $ \mathcal{P}([0,M]) $ equipped with the weak topology is a compact space, hence, by Tychonoff's theorem, so is $ \mathcal{P}([0,M])^{\eta} $; moreover, it is metrizable (see \cite{B1}). We will prove that $ \mathcal{U}_{\pmb{\lambda}}^{M} $ is a compact subset of $ \mathcal{P}([0,M])^{\eta} $. Before that, we need the following lemmas. For the sake of completeness we include the proof of Lemma \ref{LemReciprocalIntegrableLimit}; for the proof of Lemma \ref{LemConvergenceOfIntegralOfReciprocal} consult \cite[Theorem I.5.3]{B1}.
\begin{lemma}
    \label{LemReciprocalIntegrableLimit}
    Suppose $ M\in (0,\infty) $ and $ (\mu_n)_{n=0}^{\infty}\subset \mathcal{P}([0,M]) $ is such that $ \mu_n \to \mu \in \mathcal{P}([0,M]) $ weakly. Let $ k\in\mathbb{N}_1 $ and assume that 
    \[ C:= \sup_{n\in\mathbb{N}} \int_{0}^{M} \frac{1}{t^{k}}\ddd\mu_n(t) < \infty. \]
    Then $ \frac{1}{t^{k}}\in L^{1}(\mu) $ and $ \int_{0}^{M}\frac{1}{t^{k}}\ddd\mu(t) \le C $.
\end{lemma}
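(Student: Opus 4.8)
\emph{Proof proposal.} The plan is to approximate $\frac{1}{t^{k}}$ from below by bounded continuous functions, push the uniform bound through weak convergence, and then invoke the monotone convergence theorem.

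First I would fix, for each $j\in\mathbb{N}_{1}$, the truncation $f_{j}\colon[0,M]\to[0,\infty)$ given by $f_{j}(t)=\min\{t^{-k},j\}$ for $t\in(0,M]$ and $f_{j}(0)=j$. Since $t^{-k}\to\infty$ as $t\to0+$, the function $f_{j}$ is identically equal to $j$ on a neighbourhood of $0$ and is a minimum of two continuous functions on $(0,M]$, so $f_{j}\in C([0,M])$ and $0\le f_{j}\le j$. Because $f_{j}(t)\le t^{-k}$ on $(0,M]$, we get $\int_{0}^{M}f_{j}\ddd\mu_{n}(t)\le\int_{0}^{M}t^{-k}\ddd\mu_{n}(t)\le C$ for every $n$. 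Applying the definition of weak convergence to the continuous bounded function $f_{j}$ and letting $n\to\infty$ yields $\int_{0}^{M}f_{j}\ddd\mu(t)\le C$ for every $j\in\mathbb{N}_{1}$.

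Next I would use this to rule out an atom at $0$: since $f_{j}(0)=j$, we have $j\,\mu(\{0\})\le\int_{0}^{M}f_{j}\ddd\mu(t)\le C$ for all $j$, which forces $\mu(\{0\})=0$. Consequently $f_{j}(t)\nearrow t^{-k}$ for $\mu$-almost every $t$ as $j\to\infty$, and the monotone convergence theorem gives $\int_{0}^{M}t^{-k}\ddd\mu(t)=\lim_{j\to\infty}\int_{0}^{M}f_{j}\ddd\mu(t)\le C<\infty$. This simultaneously shows $\frac{1}{t^{k}}\in L^{1}(\mu)$ and delivers the stated inequality.

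The only mildly delicate point is the behaviour at the endpoint $t=0$, where $\frac{1}{t^{k}}$ is infinite: one must take the truncations constant near $0$ so they are genuinely continuous on all of $[0,M]$, and then notice that the finiteness of $C$ already precludes $\mu$ from charging $0$. Beyond that, the argument is the standard portmanteau-plus-monotone-convergence routine and requires no further work.
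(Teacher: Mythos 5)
Your proposal is correct and follows essentially the same route as the paper: truncate $t^{-k}$ from below by continuous functions, pass the uniform bound through weak convergence, and conclude with monotone convergence. The only cosmetic difference is that you rule out an atom at $0$ explicitly, while the paper absorbs this into the convention $\tfrac{1}{0}=\infty$ and lets the monotone convergence theorem handle it.
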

\begin{proof}
    For $ m\in\mathbb{N}_1 $ define a (continuous) function $ f_m\!: [0,M]\to [0,\infty) $ by the formula\footnote{We stick to the convention that $ \frac{1}{0} = \infty $}
    \begin{equation*}
        f_{m}(t) = \min\{\frac{1}{t^{k}}, m^{k} \}, \qquad t\in [0,M].
    \end{equation*}
    Observe that $ f_{m}\le f_{m+1} $, $ m\in\mathbb{N}_1 $, and $ f_{m} \stackrel{m\to\infty}{\longrightarrow} \frac{1}{t^{k}} $ pointwise. Since
    \begin{equation*}
        \int_{0}^{M} f_{m}\ddd\mu_{n} \le \int_{0}^{M} \frac{1}{t^{k}}\ddd\mu_n(t)\le C, \qquad m\in\mathbb{N}_1,
    \end{equation*}
    and
    \begin{equation*}
        \int_{0}^{M}f_{m}\ddd\mu_n \stackrel{n\to \infty}{\longrightarrow} \int_{0}^{M} f_{m}\ddd\mu,
    \end{equation*}
    it follows that $ \int_{0}^{M} f_{m}\ddd\mu \le C $. By the Lebesgue's monotone convergence theorem we obtain
    \begin{equation*}
        \int_{0}^{M} f_m\ddd\mu \stackrel{m\to \infty}{\longrightarrow} \int_{0}^{M} \frac{1}{t^{k}}\ddd\mu(t).
    \end{equation*}
    Hence, $ \int_{0}^{M} \frac{1}{t^{k}}\ddd\mu(t) \le C $.
\end{proof}
\begin{lemma}
    \label{LemConvergenceOfIntegralOfReciprocal}
    Let $ M \in (0,\infty) $, $ k\in\mathbb{N}_{1} $. Let $ (\mu_n)_{n=0}^{\infty}\subset \mathcal{P}([0,M]) $ be such that $ \mu_n \to \mu \in \mathcal{P}([0,M]) $ weakly. Assume that
    \begin{equation*}
        \sup_{n\in\mathbb{N}} \int_{0}^{M} \frac{1}{t^{k+1}}\ddd\mu_n(t) < \infty.
    \end{equation*}
    Then $ \frac{1}{t^{k}}\in L^{1}(\mu) $ and $ \int_{0}^{M}\frac{1}{t^{k}}\ddd\mu_n(t) \to \int_{0}^{M}\frac{1}{t^{k}}\ddd\mu(t) $.
\end{lemma}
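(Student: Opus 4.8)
The plan is to run the standard uniform-integrability argument (this is in essence \cite[Theorem I.5.3]{B1}); the extra power in the hypothesis $\sup_{n}\int_{0}^{M}\frac{1}{t^{k+1}}\ddd\mu_n(t)<\infty$ is exactly what makes the contribution of the singularity at $0$ uniformly negligible. First I would record integrability of the limit: since $\frac{1}{t^{k}}\le M\cdot\frac{1}{t^{k+1}}$ on $(0,M]$, the hypothesis yields
\[
\sup_{n\in\mathbb{N}}\int_{0}^{M}\frac{1}{t^{k}}\ddd\mu_n(t)\le M\sup_{n\in\mathbb{N}}\int_{0}^{M}\frac{1}{t^{k+1}}\ddd\mu_n(t)=:MC<\infty,
\]
so Lemma~\ref{LemReciprocalIntegrableLimit} (applied with exponent $k$, and also with exponent $k+1$) gives $\frac{1}{t^{k}},\frac{1}{t^{k+1}}\in L^{1}(\mu)$ together with $\int_{0}^{M}\frac{1}{t^{k+1}}\ddd\mu(t)\le C$.

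For the convergence I would truncate as in the proof of Lemma~\ref{LemReciprocalIntegrableLimit}: for $m\in\mathbb{N}_{1}$ put $f_{m}(t)=\min\{\frac{1}{t^{k}},m^{k}\}$ (with the convention $\frac{1}{0}=\infty$, so $f_{m}(0)=m^{k}$), a bounded continuous function on $[0,M]$, whence $\int_{0}^{M}f_{m}\ddd\mu_n(t)\to\int_{0}^{M}f_{m}\ddd\mu(t)$ for each fixed $m$ by weak convergence. Now $\frac{1}{t^{k}}-f_{m}(t)=\max\{\frac{1}{t^{k}}-m^{k},0\}$ vanishes for $t\ge\frac{1}{m}$, and on $\{t<\frac{1}{m}\}$ one has $\frac{1}{t^{k}}=t\cdot\frac{1}{t^{k+1}}\le\frac{1}{m}\cdot\frac{1}{t^{k+1}}$, so
\[
0\le\int_{0}^{M}\Bigl(\frac{1}{t^{k}}-f_{m}\Bigr)\ddd\mu_n(t)\le\int_{\{t<1/m\}}\frac{1}{t^{k}}\ddd\mu_n(t)\le\frac{1}{m}\int_{0}^{M}\frac{1}{t^{k+1}}\ddd\mu_n(t)\le\frac{C}{m},
\]
and the same estimate holds with $\mu_n$ replaced by $\mu$ (using $\int_{0}^{M}\frac{1}{t^{k+1}}\ddd\mu(t)\le C$). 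A three-term triangle inequality then gives
\[
\Bigl|\int_{0}^{M}\frac{1}{t^{k}}\ddd\mu_n(t)-\int_{0}^{M}\frac{1}{t^{k}}\ddd\mu(t)\Bigr|\le\frac{2C}{m}+\Bigl|\int_{0}^{M}f_{m}\ddd\mu_n(t)-\int_{0}^{M}f_{m}\ddd\mu(t)\Bigr|,
\]
and one concludes: given $\varepsilon>0$, first choose $m$ with $\frac{2C}{m}<\frac{\varepsilon}{2}$, then $N$ so that the second term is $<\frac{\varepsilon}{2}$ for $n\ge N$.

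The only genuinely new point over the bounded case is the uniform-in-$n$ smallness of $\int_{0}^{M}(\frac{1}{t^{k}}-f_{m})\ddd\mu_n$, and this is handled by the elementary bound $\frac{1}{t^{k}}\le\frac{1}{m}\cdot\frac{1}{t^{k+1}}$ valid precisely on $\{t<\frac{1}{m}\}$; everything else is weak convergence against a bounded continuous function together with the integrability facts supplied by Lemma~\ref{LemReciprocalIntegrableLimit}. Thus I do not expect a serious obstacle here.
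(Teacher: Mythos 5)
Your proof is correct: the paper itself gives no argument for this lemma, referring the reader to \cite[Theorem I.5.3]{B1} (the uniform-integrability theorem for weak convergence), and your truncation argument is a correct, self-contained writeout of exactly that mechanism, with the bound $\frac{1}{t^{k}}\le\frac{1}{m}\cdot\frac{1}{t^{k+1}}$ on $\{t<\frac{1}{m}\}$ supplying the required uniform integrability of $\frac{1}{t^{k}}$ with respect to the $\mu_n$.
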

Now we are in the position to prove that the sets $ \mathcal{U}_{\pmb{\lambda}}^{M} $ are compact in the weak topology.
\begin{theorem}
    \label{ThmCompactnessOfSetOfMeasuresWithGivenSupport}
    Let $ \kappa \in\mathbb{N} $, $ \eta\in\overline{\mathbb{N}}_{2} $, $ p\in\mathbb{N}_{1} $. Assume $ \pmb{\lambda} = \{\lambda_{-k}\}_{k=0}^{\kappa-1}\cup \{\lambda_{i,j} \}_{i,j=1}^{\eta,p} \subset (0,\infty) $ admits a subnormal completion. Then for every $ M\in (0,\infty) $ the set $ \mathcal{U}_{\pmb{\lambda}}^{M} $ is weakly compact.
\end{theorem}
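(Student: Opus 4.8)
The plan is to prove that $\mathcal{U}_{\pmb{\lambda}}^{M}$ is a closed subset of the compact metrizable space $\mathcal{P}([0,M])^{\eta}$ (a countable product of compact metric spaces), so that it suffices to show $\mathcal{U}_{\pmb{\lambda}}^{M}$ is sequentially closed in it. So suppose $((\mu_{i}^{(l)})_{i=1}^{\eta})_{l=0}^{\infty}$ is a sequence in $\mathcal{U}_{\pmb{\lambda}}^{M}$ and $\mu_{i}^{(l)}\to\mu_{i}$ weakly for every $i\in\mathbb{N}\cap[1,\eta]$; I must check that $(\mu_{i})_{i=1}^{\eta}\in\mathcal{U}_{\pmb{\lambda}}^{M}$. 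Since $t\mapsto t^{n}$ is continuous and bounded on $[0,M]$, weak convergence immediately passes \eqref{FormMeasuresInSubnormalCompletionCondition1} to the limit; taking $n=0$ shows in particular that each $\mu_{i}$ is again a probability measure.

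The substance of the proof is passing to the limit in \eqref{FormMeasuresInSubnormalCompletionCondition2} and \eqref{FormMeasuresInSubnormalCompletionCondition3}, whose integrands blow up at $0$ and which, when $\eta=\infty$, involve an infinite sum over $i$. I would handle both difficulties at once by aggregating the family into a single measure. Put $C_{0}:=\sum_{i=1}^{\eta}\lambda_{i,1}^{2}$; this is finite because the completion is a bounded operator, hence \eqref{FormBoundedShift} holds at the branching vertex (alternatively, $\int_{0}^{M}t^{-\kappa-1}\ddd\mu_{i}^{(l)}\ge M^{-\kappa-1}$ together with \eqref{FormMeasuresInSubnormalCompletionCondition3} forces $C_{0}\le M^{\kappa+1}\big(\prod_{j=0}^{\kappa-1}\lambda_{-j}^{2}\big)^{-1}$). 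Set $\nu^{(l)}:=\sum_{i=1}^{\eta}\lambda_{i,1}^{2}\mu_{i}^{(l)}$ and $\nu:=\sum_{i=1}^{\eta}\lambda_{i,1}^{2}\mu_{i}$, finite Borel measures on $[0,M]$ of total mass $C_{0}$. For $f\in C([0,M])$ one has $\int f\ddd\mu_{i}^{(l)}\to\int f\ddd\mu_{i}$ with $\lvert\int f\ddd\mu_{i}^{(l)}\rvert\le\lVert f\rVert_{\infty}$, so, $\sum_{i}\lambda_{i,1}^{2}$ being summable, dominated convergence for series yields $\int f\ddd\nu^{(l)}\to\int f\ddd\nu$; hence $\nu^{(l)}/C_{0}\to\nu/C_{0}$ weakly in $\mathcal{P}([0,M])$. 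Moreover \eqref{FormMeasuresInSubnormalCompletionCondition2}--\eqref{FormMeasuresInSubnormalCompletionCondition3} say exactly that, uniformly in $l$, $\int_{0}^{M}t^{-k-1}\ddd\nu^{(l)}=\big(\prod_{j=0}^{k-1}\lambda_{-j}^{2}\big)^{-1}$ for $k\in\mathbb{N}\cap[0,\kappa-1]$ and $\int_{0}^{M}t^{-\kappa-1}\ddd\nu^{(l)}\le\big(\prod_{j=0}^{\kappa-1}\lambda_{-j}^{2}\big)^{-1}$.

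Now I would apply the two integration lemmas to $\nu^{(l)}/C_{0}$. Since $\sup_{l}\int_{0}^{M}t^{-\kappa-1}\ddd\nu^{(l)}<\infty$, Lemma \ref{LemReciprocalIntegrableLimit} gives $t^{-\kappa-1}\in L^{1}(\nu)$ and $\int_{0}^{M}t^{-\kappa-1}\ddd\nu\le\big(\prod_{j=0}^{\kappa-1}\lambda_{-j}^{2}\big)^{-1}$; and for $k\in\mathbb{N}\cap[0,\kappa-1]$ one has $\sup_{l}\int_{0}^{M}t^{-k-2}\ddd\nu^{(l)}<\infty$ (this integral equals $\big(\prod_{j=0}^{k}\lambda_{-j}^{2}\big)^{-1}$ when $k\le\kappa-2$ and is bounded by $\big(\prod_{j=0}^{\kappa-1}\lambda_{-j}^{2}\big)^{-1}$ when $k=\kappa-1$), so Lemma \ref{LemConvergenceOfIntegralOfReciprocal} gives $\int_{0}^{M}t^{-k-1}\ddd\nu^{(l)}\to\int_{0}^{M}t^{-k-1}\ddd\nu$, whence $\int_{0}^{M}t^{-k-1}\ddd\nu=\big(\prod_{j=0}^{k-1}\lambda_{-j}^{2}\big)^{-1}$ since the left-hand side is constant in $l$. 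Finally, by Tonelli's theorem $\int_{0}^{M}t^{-k-1}\ddd\nu=\sum_{i=1}^{\eta}\lambda_{i,1}^{2}\int_{0}^{M}t^{-k-1}\ddd\mu_{i}$ for every relevant $k$ (each summand being finite, as it is dominated by $\int_{0}^{M}t^{-k-1}\ddd\nu$), so $(\mu_{i})_{i=1}^{\eta}$ satisfies \eqref{FormMeasuresInSubnormalCompletionCondition2} and \eqref{FormMeasuresInSubnormalCompletionCondition3}, hence lies in $\mathcal{U}_{\pmb{\lambda}}^{M}$. The main obstacle is precisely this interchange of the weak limit with the unbounded reciprocal integrals and with the (possibly infinite) summation over $i$; the device removing it is the replacement of the tuple $(\mu_{i}^{(l)})_{i}$ by the single aggregated measure $\nu^{(l)}$, after which Lemmas \ref{LemReciprocalIntegrableLimit} and \ref{LemConvergenceOfIntegralOfReciprocal} apply directly.
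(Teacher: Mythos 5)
Your proposal is correct and follows essentially the same route as the paper: reduce to closedness in the compact space $\mathcal{P}([0,M])^{\eta}$, aggregate the tuple into the single measure $\sum_{i}\lambda_{i,1}^{2}\mu_{i}^{(l)}$ (the paper's $\tau_n$, up to the normalization by $\sum_{i}\lambda_{i,1}^{2}$), verify its weak convergence by dominated convergence for series, and then invoke Lemmas \ref{LemReciprocalIntegrableLimit} and \ref{LemConvergenceOfIntegralOfReciprocal}. Your explicit justification of $\sum_{i}\lambda_{i,1}^{2}<\infty$ and the closing Tonelli step are slightly more detailed than the paper's write-up, but the argument is the same.
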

\begin{proof}
    Fix $ M \in (0,\infty) $. Since $ \mathcal{P}([0,M])^{\eta} $ is weakly compact, it is enough to prove that $ \mathcal{U}_{\pmb{\lambda}}^{M} $ is weakly closed in $ \mathcal{P}([0,M])^{\eta} $. Let $ ((\mu_{i}^{(n)})_{i=1}^{\eta})_{n=0}^{\infty}\subset \mathcal{U}_{\pmb{\lambda}}^{M} $ be such that $ (\mu_{i}^{(n)})_{i=1}^{\eta} \to (\mu_{i})_{i=1}^{\eta}\in \mathcal{P}([0,M])^{\eta} $ weakly. It is easy to see that \eqref{FormMeasuresInSubnormalCompletionCondition1} holds for $ (\mu_{i})_{i=1}^{\eta} $. For $ n\in\mathbb{N} $ define a measure $ \tau_{n}\in\mathcal{P}([0,M]) $:
    \begin{equation*}
    \tau_{n}(A) = \frac{1}{\sum_{i=1}^{\eta}\lambda_{i,1}^{2}}\sum_{i=1}^{\eta}\lambda_{i,1}^{2}\mu_{i}^{(n)}(A), \qquad A\in\mathcal{B}([0,M]),
    \end{equation*}
    and a measure $ \tau\in\mathcal{P}([0,M]) $:
    \begin{equation*}
    \tau(A) = \frac{1}{\sum_{i=1}^{\eta}\lambda_{i,1}^{2}}\sum_{i=1}^{\eta}\lambda_{i,1}^{2}\mu_{i}(A), \qquad A\in\mathcal{B}([0,M]). 
    \end{equation*}
    We will prove that $ \tau_{n}\to \tau $ weakly. Suppose $ f\!:[0,M]\to \mathbb{R} $ is continuous. Note that
    \[ \left\lvert\lambda_{i,1}^{2}\int_{0}^{M}f\ddd\mu_i^{(n)}\right\rvert \le \lambda_{i,1}^{2}\sup\lvert f\rvert([0,M]), \qquad n\in\mathbb{N}, \ i\in\mathbb{N}\cap[1,\eta] \]
    and $ \sum_{i=1}^{\eta} \lambda_{i,1}^{2}\sup\lvert f\rvert([0,M]) < \infty $. Since
    \begin{equation*}
    \int_{0}^{M}f\ddd\mu_i^{(n)} \stackrel{n\to\infty}{\longrightarrow} \int_{0}^{M}f\ddd\mu_{i}, \qquad i\in\mathbb{N}\cap[1,\eta],
    \end{equation*}
    by the Lebesgue's dominated convergence theorem we have
    \begin{align*}
    \int_{0}^{M}f\ddd\tau_n &= \frac{1}{\sum_{i=1}^{\eta}\lambda_{i,1}^{2}}\sum_{i=1}^{\eta}\lambda_{i,1}^{2}\int_{0}^{M}f\ddd\mu_{i}^{(n)} \\
    &\stackrel{n\to\infty}{\longrightarrow} \frac{1}{\sum_{i=1}^{\eta}\lambda_{i,1}^{2}}\sum_{i=1}^{\eta}\lambda_{i,1}^{2}\int_{0}^{M}f\ddd\mu_{i} = \int_{0}^{M}f\ddd\tau.
    \end{align*}
    From \eqref{FormMeasuresInSubnormalCompletionCondition2} and \eqref{FormMeasuresInSubnormalCompletionCondition3}, by Lemma \ref{LemReciprocalIntegrableLimit}, it follows that
    \begin{equation*}
        \sum_{i=1}^{\eta}\lambda_{i,1}^{2}\int_{0}^{M} \frac{1}{t^{m+1}} \ddd\tau(t) \le \frac{1}{\prod_{j=0}^{m-1}\lambda_{-j}^{2}}, \qquad m\in\mathbb{N}\cap [0,\kappa].
    \end{equation*} 
    Now, we can apply Lemma \ref{LemConvergenceOfIntegralOfReciprocal} to obtain that for every $ m\in\mathbb{N}\cap[0,\kappa-1] $ the following equality holds:
    \begin{equation*}
        \sum_{i=1}^{\eta}\lambda_{i,1}^{2}\int_{0}^{M}\frac{1}{t^{m+1}}\ddd\mu_i = \sum_{i=1}^{\eta}\lambda_{i,1}^{2}\int_{0}^{M} \frac{1}{t^{m+1}}\ddd\tau(t) = \frac{1}{\prod_{j=0}^{m-1}\lambda_{-j}^{2}}.
    \end{equation*}
    This completes the proof.
\end{proof}
As one can observe, Theorem \ref{ThmSubnormalCompletion} gives the solution for the subnormal completion problem only in case $ \kappa\in\mathbb{N} $. The case $ \kappa = \infty $ remains unsolved. In \cite[Problem 4.10]{EJSY2} there was posed a question whether existence of subnormal completion for every $ \kappa\in\mathbb{N} $ implies existence of such a completion for $ \kappa = \infty $. The next theorem gives a partial affirmative answer to this question.
\begin{theorem}
    \label{ThmSubnormalCompletionKappaInfinite}
    Let $ \eta\in\overline{\mathbb{N}}_{2} $ and $ p\in\mathbb{N}_{1} $. Let $ \pmb{\lambda} = \{\lambda_{-j} \}_{j=0}^{\infty}\cup\{\lambda_{i,j} \}_{i,j=1}^{\eta,p}\subset (0,\infty) $. Set $ \pmb{\lambda}_{\kappa} = \{\lambda_{-j} \}_{j=0}^{\kappa-1}\cup\{\lambda_{i,j} \}_{i,j=1}^{\eta,p} $, $ \kappa\in\mathbb{N}_{1} $, and $ \pmb{\lambda}_{0} = \{\lambda_{i,j} \}_{i,j=1}^{\eta,p} $. The following conditions are equivalent:
    \begin{enumerate}
        \item $ \pmb{\lambda} $ has a subnormal completion,
        \item there exists a sequence $ (S_{\pmb{\lambda}_{\kappa}'})_{\kappa=0}^{\infty} $ of weighted shifts such that $ S_{\pmb{\lambda}_{\kappa}'} $ is a subnormal completion of $ \pmb{\lambda}_{\kappa} $ for every $ \kappa\in\mathbb{N} $ and $ \sup_{\kappa\in\mathbb{N}} \lVert S_{\pmb{\lambda}_{\kappa}'}\rVert < \infty $.
    \end{enumerate}
\end{theorem}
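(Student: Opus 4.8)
The plan is to describe subnormal completions via the measures produced by \cite[Corollary 6.2.2]{JJS1} and Lemma \ref{LemGeneralSubnormalCompletion}, and to pass from the finite truncations to $\kappa=\infty$ by a weak compactness argument of the same flavor as in Theorem \ref{ThmCompactnessOfSetOfMeasuresWithGivenSupport}.

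\textbf{(i)$\Rightarrow$(ii).} Given a subnormal completion $S_{\pmb{\lambda}'}\in\mathbf{B}(\ell^{2}(V_{\eta,\infty}))$, I would observe that $\ell^{2}(V_{\eta,\kappa})$, seen as a subspace of $\ell^{2}(V_{\eta,\infty})$, is invariant under $S_{\pmb{\lambda}'}$: every edge of $E_{\eta,\infty}$ issuing from a vertex of $V_{\eta,\kappa}$ ends in $V_{\eta,\kappa}$. Hence $S_{\pmb{\lambda}_{\kappa}'}:=S_{\pmb{\lambda}'}|_{\ell^{2}(V_{\eta,\kappa})}$ is a bounded weighted shift on $\mathcal{T}_{\eta,\kappa}$ whose weights are the restriction of $\pmb{\lambda}'$, so it is a completion of $\pmb{\lambda}_{\kappa}$; as $\lVert S_{\pmb{\lambda}_{\kappa}'}^{n}h\rVert=\lVert S_{\pmb{\lambda}'}^{n}h\rVert$ for $h\in\ell^{2}(V_{\eta,\kappa})$, Lambert's characterization of subnormality shows that $S_{\pmb{\lambda}_{\kappa}'}$ is subnormal, and $\lVert S_{\pmb{\lambda}_{\kappa}'}\rVert\le\lVert S_{\pmb{\lambda}'}\rVert$ gives the uniform bound.

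\textbf{(ii)$\Rightarrow$(i).} Put $M:=\sup_{\kappa\in\mathbb{N}}\lVert S_{\pmb{\lambda}_{\kappa}'}\rVert<\infty$. Since each $S_{\pmb{\lambda}_{\kappa}'}$ extends $\pmb{\lambda}_{\kappa}$, we have $\lvert\lambda_{-j}\rvert\le M$ for all $j$, so any weighted shift built below extends $\pmb{\lambda}$ inside $\mathbf{B}(\ell^{2}(V_{\eta,\infty}))$. By Lemma \ref{LemGeneralSubnormalCompletion} every $\kappa$ yields a tuple $(\mu_{i}^{(\kappa)})_{i=1}^{\eta}$, where each $\mu_{i}^{(\kappa)}$ is a probability measure (by \eqref{FormConditionOnMoments1} with $k=0$) supported in $[0,\lVert S_{\pmb{\lambda}_{\kappa}'}\rVert]\subset[0,M]$, satisfying \eqref{FormConditionOnMoments1}, the equalities \eqref{FormConditionOnMoments2} for $k\in\mathbb{N}\cap[0,\kappa-1]$, and \eqref{FormConditionOnMoments3}. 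Since $\eta\le\aleph_{0}$, the space $\mathcal{P}([0,M])^{\eta}$ is compact and metrizable in the weak topology, so I pass to a subsequence $(\kappa_{l})$ with $(\mu_{i}^{(\kappa_{l})})_{i}\to(\mu_{i})_{i}$ weakly; then \eqref{FormConditionOnMoments1} passes to the limit because the integrands $t\mapsto t^{k}$ are continuous on $[0,M]$. Set $\Lambda:=\sum_{i=1}^{\eta}\lambda_{i,1}^{2}$, which is finite by \eqref{FormBoundedShift}, and $C_{m}:=\big(\prod_{j=0}^{m-1}\lambda_{-j}^{2}\big)^{-1}$; form the averaged probability measures $\tau^{(l)}:=\Lambda^{-1}\sum_{i}\lambda_{i,1}^{2}\mu_{i}^{(\kappa_{l})}$ and $\tau:=\Lambda^{-1}\sum_{i}\lambda_{i,1}^{2}\mu_{i}$, and note that $\tau^{(l)}\to\tau$ weakly by dominated convergence (with summable dominating sequence $\lambda_{i,1}^{2}\lVert f\rVert_{\infty}$). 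Fixing $k\in\mathbb{N}$, for $\kappa_{l}\ge k+2$ one has $\int_{0}^{M}\frac{1}{t^{k+1}}\ddd\tau^{(l)}=\Lambda^{-1}C_{k}$ and $\int_{0}^{M}\frac{1}{t^{k+2}}\ddd\tau^{(l)}=\Lambda^{-1}C_{k+1}<\infty$, so by Lemma \ref{LemConvergenceOfIntegralOfReciprocal} we get $\frac{1}{t^{k+1}}\in L^{1}(\tau)$ and $\int_{0}^{M}\frac{1}{t^{k+1}}\ddd\tau^{(l)}\to\int_{0}^{M}\frac{1}{t^{k+1}}\ddd\tau$, hence $\int_{0}^{M}\frac{1}{t^{k+1}}\ddd\tau=\Lambda^{-1}C_{k}$; expanding the integral against $\tau$ by Tonelli's theorem for the nonnegative integrand then gives $\sum_{i}\lambda_{i,1}^{2}\int_{0}^{M}\frac{1}{t^{k+1}}\ddd\mu_{i}=C_{k}$ for every $k\in\mathbb{N}$.

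\textbf{Conclusion and main obstacle.} The $k=0$ identity forces $\int\frac{1}{t}\ddd\mu_{i}<\infty$, so each $\mu_{i}$ gives no mass to $0$ and $\int t^{n}\ddd\mu_{i}>0$ for all $n$; I then define a weighted shift on $\mathcal{T}_{\eta,\infty}$ by $\lambda'_{-j}=\lambda_{-j}$, $\lambda'_{i,j}=\lambda_{i,j}$ for $j\le p$, and $\lvert\lambda'_{i,j}\rvert^{2}=\int t^{j-1}\ddd\mu_{i}\big/\int t^{j-2}\ddd\mu_{i}$ for $j>p$. A telescoping computation shows $\prod_{j=2}^{n+1}\lvert\lambda'_{i,j}\rvert^{2}=\int t^{n}\ddd\mu_{i}$ for all $n$, the weights are bounded, and the resulting moment and reciprocal identities are precisely condition (ii) of \cite[Corollary 6.2.2]{JJS1} in the case $\kappa=\infty$; hence $S_{\pmb{\lambda}'}$ is a subnormal completion of $\pmb{\lambda}$. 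I expect the genuine difficulty to lie in this passage to the limit: one must recover \emph{all} of the infinitely many reciprocal equalities $\sum_{i}\lambda_{i,1}^{2}\int\frac{1}{t^{k+1}}\ddd\mu_{i}=C_{k}$, and each of them can be obtained only after using the $(k+1)$-st equality as an a priori bound on reciprocal moments — which is available exactly because the truncated completions satisfy these equalities for arbitrarily large $\kappa$ while the common norm bound $M$ keeps all the measures supported in the fixed interval $[0,M]$.
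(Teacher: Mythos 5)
Your proof is correct and takes essentially the same route as the paper: the forward implication by restriction to the invariant subspaces $\ell^{2}(V_{\eta,\kappa})$, and the reverse implication by weak compactness of the measure tuples in $\mathcal{P}([0,M])^{\eta}$ together with Lemma \ref{LemConvergenceOfIntegralOfReciprocal} applied to the averaged measures to recover every reciprocal-moment equality in the limit. The only cosmetic difference is that the paper packages the limit argument as Cantor's intersection theorem applied to the nested compact sets $\mathcal{U}_{\pmb{\lambda}_{\kappa}}^{M}$ (Theorem \ref{ThmCompactnessOfSetOfMeasuresWithGivenSupport}), whereas you extract a convergent subsequence and verify the limiting tuple directly --- the same computation carried out inline.
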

\begin{proof}
    (i)$ \Longrightarrow $(ii). If $ S_{\pmb{\lambda}'} $ is a subnormal completion of $ \pmb{\lambda} $, then $ S_{\pmb{\lambda}'}|_{\ell^{2}(V_{\eta,\kappa})} $ is a subnormal completion of $ \pmb{\lambda}_{\kappa} $ for every $ \kappa\in\mathbb{N} $; moreover $ \sup_{\kappa\in\mathbb{N}}\lVert S_{\pmb{\lambda}'}|_{\ell^{2}(V_{\eta,\kappa})}\rVert \le \lVert S_{\pmb{\lambda}'}\rVert $.\\
    (ii)$ \Longrightarrow $(i). From (ii) and \cite[Lemma 4.7]{EJSY1} we know that $ \mathcal{U}_{\pmb{\lambda}_{\kappa}}^{M}\not=\varnothing $ for every $ \kappa\in\mathbb{N} $, where $ M = \sup_{\kappa\in\mathbb{N}} \lVert S_{\pmb{\lambda}_{\kappa}'}\rVert $. By Theorem \ref{ThmCompactnessOfSetOfMeasuresWithGivenSupport}, $ \mathcal{U}_{\pmb{\lambda}_{\kappa}}^{M} $ is compact. Since $ \mathcal{U}_{\pmb{\lambda}_{\kappa+1}}^{M} \subset \mathcal{U}_{\pmb{\lambda}_{\kappa}}^{M} $, by Cantor's intersection theorem we obtain that
    \begin{equation*}
        \bigcap_{\kappa=0}^{\infty} \mathcal{U}_{\pmb{\lambda}_{\kappa}}^{M}\not= \varnothing.
    \end{equation*}
    Let $ (\mu_{i})_{i=1}^{\eta} \in \bigcap_{\kappa=0}^{\infty} \mathcal{U}_{\pmb{\lambda}_{\kappa}}^{M} $. Then
    \begin{equation*}
        \sum_{i=1}^{\eta}\lambda_{i,1}^{2}\int_{0}^{M}\frac{1}{t^{\kappa+1}}\ddd\mu_{i}(t) = \frac{1}{\prod_{j=0}^{\kappa-1}\lambda_{-j}^{2}}, \qquad \kappa\in\mathbb{N}.
    \end{equation*}
    Setting
    \begin{align}
        &\lambda_{-j}' = \lambda_{-j}, \qquad j\in\mathbb{N},\\
        &\lambda_{i,j}' = \lambda_{i,j}, \qquad i\in\mathbb{N}\cap[1,\eta], \ j\in\mathbb{N}\cap[1,p],\\
        &\lambda_{i,j}' = \sqrt{\frac{\int_{0}^{M}t^{j-1}\ddd\mu_{i}(t)}{\int_{0}^{M}t^{j-2}\ddd\mu_{i}(t)}}, \qquad i\in\mathbb{N}\cap[1,\eta], \ j\in\mathbb{N}_{p+1},
    \end{align}
    and using \cite[Corollary 6.2.2]{JJS1} (as well as \cite[Procedure 6.3.1]{JJS1}) we obtain a subnormal completion $ S_{\pmb{\lambda}'} $ of $ \pmb{\lambda} $. This completes the proof.
\end{proof}
In the hypothesis of \cite[Problem 4.10]{EJSY2} there is no assumption of uniform boundedness as in Theorem \ref{ThmSubnormalCompletionKappaInfinite}.(ii). At this point, we do not know whether this assumption is superfluous or not, but it leads us to another natural problem.
\begin{problem}
    Let $ \kappa\in\mathbb{N}, \ \eta\in\overline{\mathbb{N}}_{2}, \ p\in\mathbb{N}_{1} $. Assume $ \pmb{\lambda} = \{\lambda_{-j} \}_{j=0}^{\kappa-1}\cup\{\lambda_{i,j} \}_{i,j=1}^{\eta,p} \subset (0,\infty) $ admits a subnormal completion. Compute
    \begin{equation*}
        \inf\{\lVert S_{\pmb{\lambda}'}\rVert\!: S_{\pmb{\lambda}'} \text{ is a subnormal completion of } \pmb{\lambda} \}.
    \end{equation*}
\end{problem}

	\section{Completely hyperexpansive completions of weighted shifts on directed trees.}
\label{SecCompletelyHyperexpansiveCompletions}
It is the continuation of considerations from Section \ref{SecSubnormalCompletions}, but now we are interested in completely hyperexpansive completions. Several partial results in this area can be found in \cite{Lee1}. Recall from \cite[Remark 1]{Ath1} that the sequence $ (c_{n})_{n=0}^{\infty}\subset \mathbb{R} $ is completely alternating if and only if there exists a Borel measure $ \tau\!: \mathcal{B}([0,1])\to[0,\infty) $ such that
\begin{equation}
    \label{FormCompletelyAlternatingAsMoments}
    c_{n} = c_{0} + \int_{[0,1]} (1+t+\ldots+t^{n-1})\ddd\tau(t), \qquad n\in \mathbb{N}_{1}.
\end{equation}
Moreover, it can be easily seen that the measure $ \tau $ satisfying \eqref{FormCompletelyAlternatingAsMoments} is unique; we call it \textit{a representing measure of} $ (c_{n})_{n=0}^{\infty} $. If $ \mathcal{T} = (V,E) $ is a directed tree, then \cite[Lemma 7.1.4]{JJS1} states that a bounded weighted shift $ S_{\pmb{\lambda}}\in \ell^{2}(V) $ is completely hyperexpansive if and only if for every $ v\in V $ the sequence $ (\lVert S_{\pmb{\lambda}}^{n}e_{v}\rVert^{2})_{n=0}^{\infty} $ is completely alternating; the representing measure for the sequence $ (\lVert S_{\pmb{\lambda}}^{n}e_{v}\rVert^{2})_{n=0}^{\infty} $ will be denoted by $ \tau_{v}^{\pmb{\lambda}} $. 
Exploiting once again our results on backward extensions we obtain the following counterpart of Theorem \ref{ThmSubnormalCompletion}.
\begin{theorem}
    \label{ThmCompletelyHyperexpansiveCompletion}
    Let $ \kappa\in\mathbb{N} $, $ \eta\in\overline{\mathbb{N}}_{2} $, $ p\in\mathbb{N}_{2} $. Assume $ \pmb{\lambda} = \{\lambda_{-k} \}_{k=0}^{\kappa-1}\cup\{\lambda_{i,j} \}_{i=1,j=1}^{\eta,p} \subset (0,\infty) $ is such that $ \lambda_{i,j} > 1 $ for $ i\in\mathbb{N}\cap[1,\eta], \ j\in\mathbb{N}\cap[2,p] $ and
    \[ \sum_{i=1}^{\eta}\lambda_{i,1}^{2} <\infty. \] Suppose $ \{K_{i}\}_{1\le i\le \eta} \subset \mathbb{Q}\cap \left[\frac{1}{2},\frac{p+\kappa}{2}\right] $ satisfies $ 2K_{i}\in\mathbb{N} $, $ i\in \mathbb{N}\cap [1,\eta] $. Then the following conditions are equivalent:
    \begin{enumerate}
        \item there exists a completely hyperexpansive completion $ S_{\pmb{\lambda'}}\in \mathbf{B}(\ell^{2}(V_{\eta,\kappa})) $ of $ \pmb{\lambda} $ such that $ \tau_{i,1}^{\pmb{\lambda}'} $ is of index $ K_i $ for $ i\in\mathbb{N}\cap[1,\eta] $,
        \item for every $ i\in\mathbb{N}\cap[1,\eta] $ there exists a sequence $ \{s_{i,-k}\}_{k=1}^{\kappa+1}\subset (0,\infty) $ such that for every $ i\in\mathbb{N}\cap[1,\eta] $
        \begin{equation}
            \label{FormCompletelyHyperexpansiveStrictlyPositiveMoments}
            s_{i,k} \in (t_{1}((s_{i,k+1},\ldots,s_{i,p-1})),+\infty), \quad k\in\mathbb{Z}\cap[p-N_i-2,p-3],
        \end{equation}
        \begin{equation}
            \label{FormCompletelyHyperexpansiveSingularlyPositiveMoments}
            s_{i,k} = t_{1}((s_{i,k+1},\ldots,s_{k+N_i+1})), \quad k\in\mathbb{Z}\cap[-\kappa-1,p-N_i-3],
        \end{equation}
        \begin{equation}
            \label{FormCompletelyHyperexpansiveSupremumOfMasses}
            \sup_{i\in\mathbb{N}\cap[1,\eta]} s_{i,0} < \infty
        \end{equation}
        and\footnote{Recall the convention: $ \prod\varnothing = 1 $.}
        \begin{enumerate}
            \item if $ \kappa = 0 $:
            \begin{equation}
                \label{FormCompletelyHyperexpansiveInequalityKappaZero}
                1+\sum_{i=1}^{\eta} \lambda_{i,1}^{2}s_{i,-1} \le \sum_{i=1}^{\eta} \lambda_{i,1}^{2}, 
            \end{equation}
            \item if $ \kappa > 0 $:
            \begin{equation}
                \label{FormCompletelyHyperexpansiveEquality1KappaPositive}
                1+\sum_{i=1}^{\eta} \lambda_{i,1}^{2}s_{i,-1} = \sum_{i=1}^{\eta} \lambda_{i,1}^{2},
            \end{equation}
            \begin{equation}
                \label{FormCompletelyHyperexpansiveEquality2KappaPositive}
                1+\prod_{j=0}^{k-1}\lambda_{-j}^{2}\sum_{i=1}^{\eta} \lambda_{i,1}^{2}s_{i,-k-1} = \lambda_{-k+1}^{2}, \qquad k\in\mathbb{N}\cap[1,\kappa-1],
            \end{equation}
            \begin{equation}
                \label{FormCompletelyHyperexpansiveInequalityKappaPositive}
                1+\prod_{j=0}^{\kappa-1}\lambda_{-j}^{2}\sum_{i=1}^{\eta} \lambda_{i,1}^{2}s_{i,-\kappa-1} \le \lambda_{-\kappa+1}^{2},
            \end{equation}
        \end{enumerate}
        where
        \begin{equation*}
            s_{i,k} = \prod_{j=2}^{k+2}\lambda_{i,j}^{2}- \prod_{j=2}^{k+1}\lambda_{i,j}^{2}, \qquad k\in \mathbb{N}\cap [0,p-2].
        \end{equation*}
    \end{enumerate}
\end{theorem}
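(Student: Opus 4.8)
The plan is to mirror, step for step, the proof of Theorem~\ref{ThmSubnormalCompletion}, replacing the $(0,\infty)$--machinery by its $(0,1]$--counterpart: Lemma~\ref{LemGeneralCompletelyHyperexpansiveCompletion} plays the role of Lemma~\ref{LemGeneralSubnormalCompletion}, while Theorems~\ref{ThmConditionsOnMomentsOfMinimalMeasureHalfOpenInterval} and \ref{ThmExistenceOfBackwardExtensionWithPrescribedNumberOfAtomsHalfOpenInterval} take over for Theorems~\ref{ThmConditionsOnMomentsOfMinimalMeasureInfiniteInterval} and \ref{ThmExistenceOfBackwardExtensionWithPrescribedNumberOfAtomsInfiniteInterval}. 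The translation rests on an elementary dictionary: for a Borel measure $\tau$ on $(0,1]$ with $\int_{(0,1]}t^{-\kappa-1}\ddd\tau<\infty$ one sets $\mu:=t^{-\kappa-1}\ddd\tau$, so that $\int_{(0,1]}t^{k}\ddd\mu(t)=\int_{(0,1]}t^{k-\kappa-1}\ddd\tau(t)$ for all $k\ge 0$. A telescoping computation shows that \eqref{FormPositiveMoments} is equivalent to $s_{i,k}=\int_{(0,1]}t^{k}\ddd\tau_i(t)$ for $k\in\mathbb{N}\cap[0,p-2]$ with $s_{i,k}$ as in the statement, and, writing $s_{i,-k}:=\int_{(0,1]}t^{-k}\ddd\tau_i(t)$, the negative--moment conditions \eqref{FormNegativeMomentsKappaPositive1}--\eqref{FormNegativeMomentsKappaPositive3} (or \eqref{FormNegativeMomemtsKappaZero} when $\kappa=0$) become \eqref{FormCompletelyHyperexpansiveEquality1KappaPositive}--\eqref{FormCompletelyHyperexpansiveInequalityKappaPositive} (or \eqref{FormCompletelyHyperexpansiveInequalityKappaZero}). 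Finally $s_{i,0}=\tau_i([0,1])$, so \eqref{FormSupremumOfZeroMoment} and \eqref{FormCompletelyHyperexpansiveSupremumOfMasses} coincide; moreover one checks $\lambda_{i,j}^{\prime 2}\le 1+s_{i,0}$ for $j>p$, so \eqref{FormCompletelyHyperexpansiveSupremumOfMasses} together with $\sum_{i}\lambda_{i,1}^{2}<\infty$ guarantees $S_{\pmb{\lambda}'}\in\mathbf{B}(\ell^{2}(V_{\eta,\kappa}))$.

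For $(\mathrm{i})\Rightarrow(\mathrm{ii})$ I would take $\tau_i=\tau_{i,1}^{\pmb{\lambda}'}$ as supplied by Lemma~\ref{LemGeneralCompletelyHyperexpansiveCompletion}. By hypothesis $\ind_1(\tau_i)=K_i<\infty$, so $\tau_i$ is finitely supported, and by \eqref{FormNegativeMomentsKappaPositive1} (resp. \eqref{FormNegativeMomemtsKappaZero}) it has no mass near $0$, hence $\tau_i$ is admissible in the sense of Definition~\ref{DefMomentSequenceHalfOpenInterval}. With $s_{i,-k}$ as above and $\mathbf{s}_i:=(s_{i,-\kappa-1},\dots,s_{i,p-2})$, the measure $\mu_i:=t^{-\kappa-1}\ddd\tau_i$ realizes $\mathbf{s}_i$ and has $\ind_1(\mu_i)=K_i$. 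The key point is that $\ind_1(\mathbf{s}_i)=K_i$; this follows from Theorem~\ref{ThmStrictlyPositiveSequencesHalfOpenInterval} by a case split on the length $p+\kappa$ of $\mathbf{s}_i$: if $K_i<\frac{p+\kappa}{2}$, then $\ind_1(\mathbf{s}_i)\le K_i<\frac{p+\kappa}{2}$, so $\mathbf{s}_i$ is determinate and $\mu_i$ is its unique representing measure, whence $\ind_1(\mathbf{s}_i)=\ind_1(\mu_i)=K_i$; if $K_i=\frac{p+\kappa}{2}$, the existence of a representing measure of index $\frac{p+\kappa}{2}$ is incompatible with determinacy (again Theorem~\ref{ThmStrictlyPositiveSequencesHalfOpenInterval}), so $\mathbf{s}_i$ is strictly positive and $\ind_1(\mathbf{s}_i)=K_i$ as well. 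Now Theorem~\ref{ThmConditionsOnMomentsOfMinimalMeasureHalfOpenInterval}, applied to $\mathbf{s}_i$ and reindexed by the shift $k\mapsto k+\kappa+1$ (so that, with $n=p+\kappa-1$ and $N=N_i=2K_i-1$, the ranges $[n-N,n-1]$ and $[0,n-N-1]$ there become $[p-N_i-2,p-3]$ and $[-\kappa-1,p-N_i-3]$), yields exactly \eqref{FormCompletelyHyperexpansiveStrictlyPositiveMoments} and \eqref{FormCompletelyHyperexpansiveSingularlyPositiveMoments}; the remaining clauses of $(\mathrm{ii})$ were already matched in the dictionary above.

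For $(\mathrm{ii})\Rightarrow(\mathrm{i})$ I would apply Theorem~\ref{ThmExistenceOfBackwardExtensionWithPrescribedNumberOfAtomsHalfOpenInterval} to the one--term strictly positive sequence $(s_{i,p-2})$ (strictly positive on $(0,1]$ simply because $s_{i,p-2}=\prod_{j=2}^{p-1}\lambda_{i,j}^{2}(\lambda_{i,p}^{2}-1)>0$), with $r=p+\kappa-1$ and $K=K_i\in\mathbb{Q}\cap[\frac12,\frac{p+\kappa}{2}]$: after the same reindexing, conditions \eqref{FormCompletelyHyperexpansiveStrictlyPositiveMoments} and \eqref{FormCompletelyHyperexpansiveSingularlyPositiveMoments} are precisely hypotheses (i) and (ii) of that theorem. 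It follows that $\mathbf{s}_i=(s_{i,-\kappa-1},\dots,s_{i,p-2})$ is a positive sequence on $(0,1]$ with $\ind_1(\mathbf{s}_i)=K_i$; I pick $\nu_i\in\mathcal{M}_1(\mathbf{s}_i)$ of index $K_i$ and set $\tau_i:=t^{\kappa+1}\ddd\nu_i$. Then $s_{i,k}=\int_{(0,1]}t^{k}\ddd\tau_i$ for $k\in\mathbb{N}\cap[0,p-2]$ and $s_{i,-k}=\int_{(0,1]}t^{-k}\ddd\tau_i$ for $k\in\mathbb{N}\cap[1,\kappa+1]$, so $(\tau_i)_{i=1}^{\eta}$ satisfies all of \eqref{FormPositiveMoments}, \eqref{FormSupremumOfZeroMoment} and \eqref{FormNegativeMomentsKappaPositive1}--\eqref{FormNegativeMomentsKappaPositive3} (resp. \eqref{FormNegativeMomemtsKappaZero}); Lemma~\ref{LemGeneralCompletelyHyperexpansiveCompletion} then produces a completely hyperexpansive completion $S_{\pmb{\lambda}'}$, and $\tau_{i,1}^{\pmb{\lambda}'}=\tau_i$ is of index $K_i$ by construction.

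The steps that will require genuine care are the index bookkeeping: establishing $\ind_1(\mathbf{s}_i)=K_i$ in the forward direction as above, and, in both directions, verifying that the index ranges appearing in \eqref{FormCompletelyHyperexpansiveStrictlyPositiveMoments}--\eqref{FormCompletelyHyperexpansiveSingularlyPositiveMoments} line up, after the $\kappa+1$--shift and with $N_i=2K_i-1$, with those of Theorems~\ref{ThmConditionsOnMomentsOfMinimalMeasureHalfOpenInterval} and \ref{ThmExistenceOfBackwardExtensionWithPrescribedNumberOfAtomsHalfOpenInterval}. Everything else is the by--now routine translation, together with the slightly tedious but straightforward separate treatment of $\kappa=0$, where the chain of equalities \eqref{FormCompletelyHyperexpansiveEquality1KappaPositive}--\eqref{FormCompletelyHyperexpansiveEquality2KappaPositive} capped by \eqref{FormCompletelyHyperexpansiveInequalityKappaPositive} collapses to the single inequality \eqref{FormCompletelyHyperexpansiveInequalityKappaZero}. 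No new idea beyond those in Section~\ref{SecSubnormalCompletions} is expected: complete hyperexpansivity is here, at every step, the exact counterpart of subnormality with $(0,\infty)$ replaced by $(0,1]$ and Stieltjes moments replaced by completely--alternating data.
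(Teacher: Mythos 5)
Your proposal is correct and follows essentially the same route as the paper: Lemma \ref{LemGeneralCompletelyHyperexpansiveCompletion} to pass between completions and the measures $\tau_i$, the substitution $s_{i,-k}=\int_{(0,1]}t^{-k}\ddd\tau_i$, and Theorems \ref{ThmConditionsOnMomentsOfMinimalMeasureHalfOpenInterval} and \ref{ThmExistenceOfBackwardExtensionWithPrescribedNumberOfAtomsHalfOpenInterval} for the two implications. Your extra care with the index bookkeeping (showing $\text{ind}_{1}(\mathbf{s}_i)=K_i$ via determinacy, and the explicit reindexing by $\kappa+1$) only fills in details the paper leaves implicit.
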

\begin{proof}
    (i)$ \Longrightarrow $(ii). By (i) and \cite[Proposition 2.8]{Lee1} there exist Borel measures $ \{\tau_{i}\}_{i=1}^{\eta} $ on $ (0,1] $ such that $ \tau_{i,1}^{\pmb{\lambda'}} = \tau_i $ is of index $ K_i $ for $ i\in\mathbb{N}\cap[1,\eta] $ and (\ref{FormCompletelyHyperexpansiveSupremumOfMasses})--(\ref{FormCompletelyHyperexpansiveInequalityKappaPositive}) hold with
    \[ s_{i,-k} = \int_{(0,1]} \frac{1}{t^{k}}\ddd\tau_{i}(t), \qquad i\in\mathbb{N}\cap[1,\eta],\ k\in\mathbb{N}\cap[0,\kappa+1]. \]
    Then for every $ i\in\mathbb{N}\cap[1,\eta] $ the sequence $ (s_{i,-\kappa-1},\ldots,s_{i,p-2}) $ is a moment sequence on $ (0,1] $ of index $ K_i $, so Theorem \ref{ThmConditionsOnMomentsOfMinimalMeasureHalfOpenInterval} gives (\ref{FormCompletelyHyperexpansiveStrictlyPositiveMoments}) and (\ref{FormCompletelyHyperexpansiveSingularlyPositiveMoments}).\\
    (ii)$ \Longrightarrow $(i). From (\ref{FormCompletelyHyperexpansiveStrictlyPositiveMoments}) and (\ref{FormCompletelyHyperexpansiveSingularlyPositiveMoments}), by Theorem \ref{ThmExistenceOfBackwardExtensionWithPrescribedNumberOfAtomsHalfOpenInterval}, it follows that for every $ i\in\mathbb{N}\cap[1,\eta] $ the sequence $ \mathbf{s}_i = (s_{i,-\kappa-1},\ldots,s_{i,p-2}) $ is a positive sequence on $ (0,1] $ satisfying $ \text{ind}_{1}(\mathbf{s}_i) = K_i $ . Let $ \nu_i\in\mathcal{M}_{1}(\mathbf{s}_i) $ be such that $ \ind_{1}(\nu_{i}) = K_i $, $ i\in\mathbb{N}\cap[1,\eta] $. Set $ \tau_{i} = t^{\kappa+1}\ddd\nu_i $, $ i\in\mathbb{N}\cap[1,\eta] $. Then for every $ i\in\mathbb{N}\cap[1,\eta] $ the measure $ \tau_{i} $ also satisfies $ \text{ind}_{1}(\tau_i) = K_{i} $. By \cite[Proposition 2.8]{Lee1}, $ \pmb{\lambda} $ has a completely hyperexpansive completion $ S_{\pmb{\lambda}'} $ such that $ \tau_{i,1}^{\pmb{\lambda}'} = \tau_i $ is of index $ K_i $ for every $ i\in\mathbb{N}\cap[1,\eta] $.
\end{proof}
Again, when solving completely hyperexpansive completion problem we can always look for measures $ \tau_{i,1}^{\pmb{\lambda}'} $ ($ i\in \mathbb{N}\cap[1,\eta] $) of index at most $ \frac{p+\kappa}{2} $ (cf. Remark \ref{RemWhyCanLookForFinitelyAtomicMeasuresInSubnormalCompletion}).\par
Opposed to subnormal completions, the case $ \kappa = \infty $ is trivial. Since by \cite[Corollary 7.2.3]{JJS1} the only completely hyperexpansive weighted shifts on $ \mathcal{T}_{\eta,\infty} $ are isometries, the existence of completely hyperexpansive completion is guaranteed by the following three equalities
\begin{align*}
    \lambda_{-k} &= 1, \qquad k\in\mathbb{N}\\
    \lambda_{i,j} &= 1, \qquad i\in\mathbb{N}\cap [1,\eta],\ j\in\mathbb{N}_2\\
    \sum_{i=1}^{\eta}\lambda_{i,1}^{2} &= 1.
\end{align*}
	\section*{Declarations}
	\textbf{Conflict of interests} The author declares that he has no conflict of interests.
	\bibliography{references}

\end{document}